\documentclass[a4paper]{amsart}
\usepackage[utf8]{inputenc}
\usepackage[left=3cm, right=3cm, top = 3.5cm, bottom=3.5cm]{geometry}

\usepackage[dvipsnames]{xcolor}
\definecolor{darkblue}{rgb}{0, 0, 0.6}

\usepackage[pdftitle={Defects in skein theory and TQFT}, pdfauthor={Patrick Kinnear and Ingo Runkel}, pdfpagelayout=OneColumn, ocgcolorlinks, colorlinks=true, linkcolor=darkblue, urlcolor=NavyBlue, citecolor=darkblue, filecolor=darkblue]{hyperref}

\usepackage[safeinputenc, style=alphabetic, citestyle=alphabetic, maxnames=5, maxalphanames=5, backend=biber]{biblatex}
\DeclareLabelalphaTemplate{
  \labelelement{
    \field[final]{shorthand}
  }
  \labelelement{
    \field[strwidth=1,strside=left]{labelname}
  }
}

\DeclareBibliographyCategory{needsurl}
\renewbibmacro*{url+urldate}{%
  \ifcategory{needsurl}
    {\printfield{url}%
     \iffieldundef{urlyear}
       {}
       {\setunit*{\addspace}%
        \printurldate}}
    {}}
\newcommand{\entryneedsurl}[1]{\addtocategory{needsurl}{#1}}

\DeclareFieldFormat{url}{\newline\mkbibacro{URL}\addcolon\nobreakspace\url{#1}}

\AtEveryBibitem{%
  \clearfield{pubstate}
  \clearfield{issn}
  \clearfield{urlyear}
  \clearfield{eprintclass}
  \ifentrytype{misc}
  {\clearfield{doi}
  \clearfield{year}}
  {}%
}

\entryneedsurl{Kel05BasicConceptsEnriched}

\usepackage{stmaryrd}

\usepackage{quiver}

\usepackage{pkinne}

\usepackage{subcaption}
\usepackage{comment}
\usepackage{adjustbox} 

\usepackage[notransparent, inkscapepath=svgsubdir]{svg}

\graphicspath{{images/}}

\def\Bord{\mathrm{Bord}}
\def\SSigma{\mathbb{S}}
\def\Surface{S}
\def\RT{\mathrm{RT}}
\def\Rex{\mathbf{Rex}}
\def\Irr{\mathrm{Irr}}
\def\Ev{\mathrm{Ev}}
\def\M{\mathbb{M}}
\newcommand{\interior}[1]{\mathrm{int}{#1}}
\newcommand{\Cylinder}[1]{\mathrm{Cyl}(\M, {#1})}

\makeatletter
\providecommand*{\twoheadrightarrowfill@}{%
  \arrowfill@\relbar\relbar\twoheadrightarrow
}
\providecommand*{\twoheadleftarrowfill@}{%
  \arrowfill@\twoheadleftarrow\relbar\relbar
}
\providecommand*{\xtwoheadrightarrow}[2][]{%
  \ext@arrow 0579\twoheadrightarrowfill@{#1}{#2}%
}
\providecommand*{\xtwoheadleftarrow}[2][]{%
  \ext@arrow 5097\twoheadleftarrowfill@{#1}{#2}%
}
\makeatother

\title{Defects in skein theory and TQFT}
\author{Patrick Kinnear and Ingo Runkel}
\address{Fachbereich Mathematik, Universit\"{a}t Hamburg, Bundesstra{\ss}e 55, 20146 Hamburg, Germany}
\email{patrick.kinnear@uni-hamburg.de, ingo.runkel@uni-hamburg.de}

\subjclass[2020]{57K16, 57K31, 18M30, 18M15}

\begin{document}

    \begin{abstract}
      Given a 3-manifold $M$ with a network of line and point defects in its boundary, we define the skein module of this configuration, generalizing the well-studied case of 3-manifolds which only admit point defects in the boundary. We prove that when all defects are labelled by semisimple data, our skein module is isomorphic to the state space of $\partial M$ in the defect version of the Reshetikhin--Turaev TQFT constructed by Carqueville--Runkel--Schaumann. Our defect skein modules follow naturally by globalizing the graphical calculus of module categories and functors thereof,  and generalize the possible defect data considered in the defect TQFT beyond the semisimple case. 
\end{abstract}

    \maketitle

    \tableofcontents

    \section{Introduction}
    In physics, \emph{defects} are submanifolds along which an excitation or disruption occurs, where extended operators are localized, or where differing regimes meet. Far from indicating deficiency, it is now understood that defects play an indispensable role as carriers of symmetry in quantum field theory: see \cite{FFRS04KramersWannierDualityConformal,GKSW15GeneralizedGlobalSymmetries,FMT23TopologicalSymmetryQuantum} for some works in this direction.

    Moreover, there are interesting recent applications of defect skein constructions in quantum topology, such as \cite{BJ25ParabolicSkeinModules} which uses a so-called parabolic defect in skein theory to obtain a powerful (and algorithmically computable) description of the quantum $A$-polynomial, and \cite{Gar26HOMFLYParabolicRestriction}, which uses an interpolating version of a parabolic defect to categorify Turaev's coproduct on the HOMFLY skein algebra. Skein modules for bicategories were developed in \cite{FSY23StringnetModelsPivotal} and have applications to correlators in two-dimensional defect conformal field theory.
 
    In the context of $(2  + 1)$-dimensional topological quantum field theories, defects of all codimension were introduced in TQFTs of Turaev--Viro--Barrett--Westbury type in \cite{Meu23StateSumModels}. For Reshetikhin--Turaev type TQFTs, state spaces of surfaces admit a description by skein modules \cite{BHMV95TopologicalQuantumField}. TQFTs of this type can be moreover be equipped with defects of all codimension using the techniques developed in \cite{CRS19OrbifoldsNdimensionalDefect, CRS18LineSurfaceDefects,CRS20OrbifoldsReshetikhinTuraevTQFTs}, where defects were used to implement and also gauge 
    possibly non-invertible symmetries    
    in the RT theory. In this paper we are concerned with giving an interpretation of these defect constructions in terms of skein theory. In particular, we introduce the defect skein module of a 3-manifold with defects in its boundary, and identify this with the defect RT state space of the boundary surface.

    \medskip

    The ordinary RT theory is based on a modular fusion category $\cC$, and is defined for marked surfaces and 3d bordisms containing $\cC$-coloured ribbon graphs. As explained in \cite{CRS18LineSurfaceDefects}, this bordism category can be augmented to include line and point defects in surfaces, and surface and line defects in 3d bordisms, labelled with appropriate data in $\cC$. The state space of a defect surface is calculated by producing from the defect arrangement an idempotent ribbon bordism which is topologically a cylinder, and whose ribbons end at marked points corresponding to a triangulation of the defects. The image of the induced projector on the (ordinary) RT state space of the marked surface is the state space of the defect theory $Z^{\mathrm{def}}$.

    The goal of the present paper is to describe this defect state space using skein theory. Already skein modules can be used to describe state spaces in the ordinary RT theory: if a marked surface is parameterized as the boundary of a 3-manifold $M$, then the $\cC$-skein module $\Sk(M)$ is isomorphic to the state space of the surface.
    Any modular fusion category (indeed any ribbon category) admits a 3d graphical calculus, and the skein module can be seen as globalizing this calculus: it is the span of all topological $\cC$-coloured diagrams in $M$ modulo the relations holding in the graphical calculus for $\cC$, which are applied in any embedded ball.
    
    To connect such graphical calculi with the defect construction of \cite{CRS18LineSurfaceDefects}, we note that the line defects of \cite{CRS18LineSurfaceDefects} are labelled by $\Delta$-separable symmetric Frobenius algebras, which by \cite{Ost01ModuleCategoriesWeak,Sch13TracesModuleCategories} correspond to semisimple $\cC$-module categories equipped with $\cC$-module trace; and the point defects are labelled by multimodules, which by a version of the Eilenberg--Watts theorem (see e.g.\ \cite{BJS21DualizabilityBraidedTensor}) correspond to $\cC$-module functors. Module categories (and their functors) also admit a graphical calculus, this time naturally defined on the boundary of a $\cC$-labelled bulk, and where plain module categories are considered (as opposed to tensor or braided tensor categories), confined to a 1-dimensional locus. 
    
    There is therefore a natural way to generalize the skein module to the \emph{defect skein module} of a 3-manifold with line and point defects in its boundary, labelled by module categories and functors thereof. We define this object, and denote it by $\Sk(\M)$ where $\M$ is notation for the manifold $M$ together with its collection of defects. When the defects in $\M$ are simply marked points in the boundary, we recover the usual definition of skein module, corresponding to the ordinary RT theory. Our definition is the natural result of meditating on the graphical calculi of defects, and as such we expect it will be unsurprising to experts: similar definitions are given, for instance, in \cite{BJ25ParabolicSkeinModules,Gar26HOMFLYParabolicRestriction}, for different flavours of defect to ours. Indeed, the shape that defect skein theory should take more generally was implicitly outlined, for example, in \cite[\S 6.4]{MW12BlobComplex}. However, a fully worked and explicit definition for boundary line defects in the language of ribbon categories and their modules has not appeared in the literature to the best of our knowledge, and we consider this a novel contribution of the present work.

      \begin{figure}
      \centering
      \includesvg[width=16cm]{phi-def-4.svg}
      \caption{The definition of $\Phi$. Left: a section of a ribbon graph $\Gamma$ near a point defect. The interior edges and vertices are  coloured orange, the boundary edges and vertices are coloured blue, green and pink, and the boundary region  is shaded. Right: the resulting bordism after attaching a cylinder over the line and point defects. This is (part of) a defect bordism $\Cylinder{\Gamma} :  \emptyset \to \partial \M$.}
      \label{f-phi-def}
    \end{figure}

    Elements of $\Sk(\M)$ are represented by graphs which along line defects are coloured by modules $M_i \in \cM_i = A_i \Mod_{\cC}$, and at point defects by maps of multimodules between the algebras labelling coincident lines. These are appropriate boundary conditions in the defect RT theory for surface defects labelled by algebras $A_i$. By coupling defect skeins $[\Gamma]$ in $\M$ to a cylindrical defect bordism $\partial \M \times I$ along the line and surface defects, we then obtain defect bordisms $\Cylinder{\Gamma} : \emptyset \to \partial \M$, which induce elements of $Z^{\mathrm{def}}(\partial \M)$,
    see Figure~\ref{f-phi-def}.   
    This assembles into a well-defined linear map
    \[
      \Phi : \Sk(\M) \to Z^{\mathrm{def}}(\partial \M)
    \]
    to the state space of the defect TQFT. The main theorem of this paper is then stated as follows.   

    \begin{thm}{(\ref{t-main-thm-in-text})}
      \label{t-main-thm}
      Let $\M$ be a 3-manifold with line and surface defects in its boundary. Then $\Phi$ defines an isomorphism
      \[
        \Phi : \Sk(\M) \xrightarrow{\cong} Z^{\mathrm{def}}(\partial \M).
      \]
    \end{thm}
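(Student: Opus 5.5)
The plan is to reduce the statement to the classical identification of ordinary skein modules with Reshetikhin--Turaev state spaces, and then to show that the defect projector on the one side matches the defect skein relations on the other.

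\textbf{Step 1: an idempotent on the ordinary skein module.} Choose a triangulation, or more generally a cell decomposition, of the line and point defect network in $\partial M$. This yields a marked surface $\Sigma$ whose markings carry the objects $A_i$ on line segments and the multimodule objects at vertices. By \cite{BHMV95TopologicalQuantumField}, the ordinary skein module $\Sk(M;\Sigma)$ is isomorphic to $Z^{\RT}(\Sigma)$. Under this isomorphism the defect state space $Z^{\mathrm{def}}(\partial\M)$ is, by construction in \cite{CRS18LineSurfaceDefects}, the image of the idempotent $P$ induced by the cylinder bordism. So I will show that $P$, acting on skeins, is gluing in a collar $\partial M\times I$ that carries the $A_i$-ribbon networks (Frobenius algebra webs on the surface defects) together with the multimodule vertex insertions. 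It then suffices to construct mutually inverse maps between $\Sk(\M)$ and $P\,\Sk(M;\Sigma)$.

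\textbf{Step 2: the two maps.} Define $\Psi:\Sk(\M)\to P\,\Sk(M;\Sigma)$ as follows. Take a defect skein $\Gamma$. Each boundary edge labelled by a module $M\in A_i\Mod_\cC$ is pushed slightly into the interior, and the $A_i$-action on $M$ is used to connect it to the $A_i$-ribbon near the line defect. Point defect vertices, which carry bimodule maps, are treated similarly, using the multimodule at the marked point. The result is then projected with $P$. This is the skein-level incarnation of $\Phi$ composed with the BHMV isomorphism, so $\Phi$ and $\Psi$ agree up to that isomorphism.

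Define the inverse $\Xi$ by taking an ordinary skein ending at the marked points and pushing its endpoints onto the line defects. A strand ending at an $A_i$-marking becomes a boundary edge labelled by the free module $A_i$, regarded as an object of $\cM_i$, and similarly at the points.

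\textbf{Step 3: checks.} Well-definedness of $\Psi$ is a local check in balls meeting the boundary. The defect skein relations in a half-ball around a line defect are the graphical calculus of $\cM_i=A_i\Mod_\cC$. Under $\Psi$ these become identities between $\cC$-ribbon graphs involving module actions, and these identities hold because module morphisms are $A$-linear and the actions are associative and unital. Relations near point defects hold because the vertex labels are multimodule maps.

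The identity $\Psi\Xi=P$ follows from $\Delta$-separability and the Frobenius/symmetric property. A free-module boundary edge reinserted produces a string of $A$-webs that collapse under $P$.

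The identity $\Xi\Psi=\mathrm{id}$ is the harder direction. I would first show that $\Sk(\M)$ is spanned by skeins whose boundary labels are free modules $A_i$ and free bimodules at points. This uses semisimplicity: every $M\in\cM_i$ is a retract of $A_i\otimes X$, realised by the split idempotent coming from $\Delta$-separability. Splitting a boundary edge through $A_i\otimes X$ is a skein relation, and it lets one push $X$ into the bulk. On such spanning skeins, $\Xi\Psi$ visibly returns the original skein after applying the separability relation $m\circ\Delta=\mathrm{id}$.

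\textbf{Main obstacle.} I expect the main obstacle to be this spanning and reduction argument near point defects, together with showing that the result does not depend on the chosen triangulation. The point-defect case needs the Eilenberg--Watts correspondence to express a module functor as tensoring with a multimodule, and it needs the $\cC$-module traces to match the Frobenius counits, so that loop evaluations on both sides agree. I would handle independence of the triangulation by noting that the projectors for different triangulations are intertwined by the CRS Pachner-move invariance, while $\Sk(\M)$ is manifestly independent of the choice.
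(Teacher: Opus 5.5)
Your proposal is essentially the paper's proof. Your $\Psi$ is the paper's $\Phi$ (glue on the collar whose surface defects are replaced by skeleta), and your $\Xi$ is the paper's $\Psi$ (attach the $A_i$-ribbons to $A_i$-labelled defect lines by multiplication, with unitor labels at point defects); your checks are the paper's $\Phi\Psi=\pi$, surjectivity of $\Psi$ via the separability retract $N\to A\otimes N\to N$ together with zipping and subduction, and $\Psi P=\Psi$. The differences are minor: you check well-definedness by a direct ribbon-graph computation inside the collared cube instead of via the sphere state spaces $Z^{\mathrm{def}}(S^{\pm})$, and you assemble the inverse algebraically rather than by a dimension count. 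Your worry about matching module traces with Frobenius counits is unnecessary, since valid cubes only ever meet arcs of line defects, so no loop along a defect is ever evaluated.
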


    Let us remark that, while RT theory and its defect versions are defined for modular \emph{fusion} categories and their \emph{semisimple} module categories, our definition of defect skein modules allows for defects coloured by non-semisimple module categories, strictly enlarging the scope of the projector-based treatment of defects beyond semisimplicity. 

\medskip
\noindent
\textbf{Outlook.}
     There is now vigorous interest in non-semisimple TQFTs and associated skein constructions: see \cite{DGG+223DimensionalTQFTsNonsemisimple,CGHP23Skein3+1TQFTsNonsemisimple,CGP23AdmissibleSkeinModules} for a selection of recent developments. We intend to explore in future work how our skein-theoretic approach can be used to introduce defects in non-semisimple TQFTs.

     It is often useful to situate defect TQFTs in a higher categorical framework, with \cite{CMS203dimensionalDefectTQFTs} a notable example of extracting the higher categorical essence of defect TQFTs in dimension 3. Both semisimple \cite{Coo23ExcisionSkeinCategories} and non-semisimple \cite{BH24SkeinCategoriesNonsemisimple,RST24ExcisionSpacesAdmissible} skein theory can be related to factorization homology: an elegant higher categorical tool which makes precise the sense in which skein theory ``globalizes'' graphical calculi as a colimit. We expect that, through the connections established in this paper, we will be able to yield new insight on defect RT theories by interpreting them through the lens of factorization homology, in particular the process of gauging symmetry defects.

     \begin{figure}
      \centering
      \includesvg[width=5cm]{BrTens.svg}  
      \caption{A schematic in the style of \cite{BJS21DualizabilityBraidedTensor}, showing how our defect data fits into a higher algebraic setting such as $\mathbf{BrTens}$. The $\cC$-labelled bulk represents an object, corresponding to the 4d CY theory. Its regular boundary (shown as an interface with the trivial theory, labelled $\One$), also labelled $\cC$, represents a 1-morphism $\One \to \cC$, and corresponds to the RT theory. Our defects lie within the regular boundary. They are not the boundary of interior surface or line defects.}    \label{fig:BrTensCube}
     \end{figure}

    Let us remark that the TQFTs obtained from the above factorization homology constructions are (categorified versions of) the 4d Crane--Yetter theory, originating in \cite{CY93CategoricalConstruction4d}. The state spaces of this theory are isomorphic to skein modules, a connection going back to \cite{BHMV95TopologicalQuantumField}, revisited in the non-semisimple setting in \cite{CGHP23Skein3+1TQFTsNonsemisimple}. The CY theory is the anomaly theory for the RT theory, which is regarded as its regular boundary \cite{Tha21CategoryBoundaryValues}. Since our defects lie only in the boundary of the 3-manifold $M$, and do not extend to the bulk, they should be considered self-defects of the RT theory which do not come from defects in the CY anomaly theory. This is in contrast to the surface defects which appear in \cite{BJ25ParabolicSkeinModules,Gar26HOMFLYParabolicRestriction}, which are defects in the CY theory itself. 
    In the context of extended TQFTs valued in the higher Morita theory $\mathbf{BrTens}$, the surface defects of  \cite{BJ25ParabolicSkeinModules,Gar26HOMFLYParabolicRestriction} correspond to 1-morphisms $\cC \to \cC$, whereas our line defects would correspond to 2-morphisms $D \to D$, where $D$ is a 1-morphism $\One \to \cC$ corresponding to the regular right $\cC$-module (alias Dirichlet boundary condition), see Figure~\ref{fig:BrTensCube}.
    One can also consider different 1-morphisms $D,E : \One \to \cC$ to label the two-dimensional boundary regions, and accordingly 2-morphisms $D \to E$ for the line defects. The corresponding construction in RT TQFT with defects has been developed in \cite{FSV13BicategoriesBoundaryConditions,KMRS22DomainWalls3d,CM26OrbifoldCompletion3Categories}, and it is natural to generalize our defect skein module construction and the TQFT comparison to this case.
    
    We expect that the stratified version of factorization homology of \cite{AFTFactorizationHomologyStratified2016} will provide an appropriate vehicle to interpret our defect constructions as coming from such higher categorical data; an alternative technology, well-suited to derived enhacements of the defect story, is found in \cite{MW12BlobComplex}. We intend to probe these connections in future work.

    From a different perspective, the projector-based approach to defects in the semisimple setting may help to better understand these newly emerging higher-algebraic and skein-theoretic defect constructions. The projector approach has the advantage of presenting state spaces in terms of an explicit linear map: in a sense, this construction tells us that the added complexity we encounter when adding defects to skein theory is on the order of linear algebra. We hope to explore the extent to which the projector approach can be generalized to yield a feasibly computable approach to non-semisimple skein theory and factorization homology.

\medskip
\noindent
\textbf{Outline.}
    The outline of the paper is as follows. We recall the necessary algebraic and topological background in Section~\ref{s-background}. The algebraic background is standard, and the topological background is a minor adaptation of the theory of stratified manifolds of \cite{CRS19OrbifoldsNdimensionalDefect} to our purposes. In Section~\ref{s-defect-skein} we introduce defect skein modules of 3-manifolds with defects in their boundary labelled by module categories and functors thereof. In Section~\ref{s-defect-RT} we recall the construction of \cite{CRS18LineSurfaceDefects} of the defect version of RT theory. The proof of Theorem~\ref{t-main-thm}, that our defect skein modules are isomorphic to these defect state spaces, is the content of Section \ref{s-main-thm}.

\medskip
\noindent
\textbf{Acknowledgements.}
    We would like to thank Kevin Walker, for illuminating discussions on the formalism of defect skeins. We thank Nils Carqueville and David Jordan for helpful comments on a draft of this paper. The authors acknowledge support by the Deutsche Forschungsgemeinschaft (DFG, German Research Foundation) as part of SFB 1624 – ``Higher structures, moduli spaces and integrability'' – 506632645. IR also acknowledges support by the DFG under Germany's Excellence Strategy - EXC 2121 ``Quantum Universe'' - 390833306.

    \section{Notation and conventions}
    \label{s-background}

    \subsection{Algebra}

    We work over a field $\mathbf{k}$, and assume all $1$-categories to be essentially small.

Recall that
      a category
      is \emph{$\mathbf{k}$-linear} if it is enriched over $\Vect_{\mathbf{k}}$.

    \begin{defn}
      The $(2, 1)$-category $\Rex$ consists of finitely cocomplete $\mathbf{k}$-linear categories, right exact functors, and natural transformations. 
    \end{defn}

    Note that categories in $\Rex$ need not be abelian.
    In the sequel we will assume all categories to be in $\Rex$ and all functors to be $\mathbf{k}$-linear and right exact.

    \begin{defn}
      Given two categories $\cA, \cB \in \Rex$, the \emph{Deligne--Kelly tensor product} is the universal object $\cA \bt \cB \in \Rex$ which receives a $\mathbf{k}$-bilinear functor from $\cA \times \cB$ which is right exact in each variable.
    \end{defn}

    It is shown in \cite{Kel05BasicConceptsEnriched} that such an object exists for any $\cA, \cB \in \Rex$. In the more restrictive setting that $\cA, \cB$ are abelian and have finite-dimensional $\Hom$-spaces and finite length composition series, a tensor product construction was given in \cite{Del90CategoriesTannakiennes}, and was shown to agree with the tensor product in $\Rex$ in \cite{Lop13TensorProductsFinitely}. It was shown in \cite{Kel89ElementaryObservations2categorical} that the Deligne-Kelly tensor product equips $\Rex$ with the structure of a symmetric closed monoidal $(2, 1)$-category.

    We recall that the notions of (braided) monoidal, ribbon, and module categories make sense in various categorical settings. Here we are interested in such structures defined in $\Rex$. We recall here the required data, and do not recall in detail the conditions this data must satisfy, giving instead pointers to where this can be found in a standard textbook reference \cite{EGNO15TensorCategories}.

    \begin{defn}
      A \emph{monoidal category} is a category $\cC \in \Rex$ equipped with a functor $\otimes : \cC \bt \cC \to \cC$, a distinguished object $\One \in \cC$ called the monoidal unit, a natural isomorphism $\alpha : (- \ot -) \ot - \xrightarrow{\sim} - \ot (- \ot -)$ called the \emph{associator}, and natural isomorphisms $\lambda : \One \ot - \to \Id, \rho : - \ot \One \to \Id$ called the \emph{left and right unitors}, such that this data satisfies the pentagon and unit axioms of, e.g., \cite[Eqns.\,2.2,\,2.10]{EGNO15TensorCategories}.
    \end{defn}

    \begin{defn}
      Let $\cC$ be any monoidal category. A \emph{(right) $\cC$-module category} for $\cC$ is a category $\cM \in \Rex$ equipped with an action functor $\lhd : \cM \bt \cC \to \cM$ and a natural isomorphism
      \[
        \{\mu_{X, V, W} : X \lhd (V \ot W) \xrightarrow{\sim} (X \lhd V) \lhd W\}_{X \in \cM, V, W \in \cC}
      \]
      called the \emph{modulator}, satisfying the pentagon axiom of, e.g.\ \cite[Eqn.\,7.2]{EGNO15TensorCategories}, and such that the functor $M \mapsto M \lhd \One$ is an autoequivalence of $\cM$. A \emph{left $\cC$-module category} is defined similarly.
    \end{defn}

    If unspecified, a \emph{$\cC$-module category} means a right $\cC$-module category. 
    We note that as per our convention, $\otimes : \cC \bt \cC \to \cC$ and $\lhd : \cM \bt \cC \to \cM$ are right exact functors.

    \begin{defn}
      A $\cC$-module functor between $\cC$-module categories $(\cM, \lhd_{\cM}, \mu_{\cM})$ and  $(\cN, \lhd_{\cN}, \mu_{\cN})$ is a pair $(F, \rho)$ where $F : \cM \to \cN$ is a functor and $\rho : F(-) \lhd_{\cN} - \xrightarrow{\sim} F(- \lhd_{\cM} -)$ is a natural isomorphism intertwining $\mu_{\cM}$ and $\mu_{\cN}$, called the \emph{module coherence}.
    \end{defn}

     \begin{defn}
      Given a monoidal category $\cC$ and right- and left-module categories $\cM, \cN$, their \emph{relative Deligne-Kelly tensor product} is the universal category $\cM \bt_{\cC} \cN \in \Rex$ which receives a functor $F$ from $\cM \bt \cN$ together with a natural isomorphism
      \[
        (X \lhd V) \bt Y \xrightarrow{\beta} X \bt (V \rhd Y)
      \] 
      called a \emph{balancing}, satisfying the pentagon axiom of \cite[Eqn.\,2]{ENO10FusionCategoriesHomotopy}.
    \end{defn}

    The existence of such an object for any $\cM, \cN \in \Rex$ was established in \cite{BBJ18IntegratingQuantumGroups}. In \cite[Prop.\,3.5]{BBJ18IntegratingQuantumGroups} it is shown that $\Rex$ is closed under small 2-colimits and $\bt$ preserves these in each entry. The relative tensor product can then be defined as the 2-colimit of a bar resolution, but in a 2-category this colimit is equivalent to the colimit of the diagram
    \[\begin{tikzcd}
      {\cM \bt \cC \bt \cN} && {\cM \bt \cN}
      \arrow[shift left, from=1-1, to=1-3]
      \arrow[shift right, from=1-1, to=1-3]
    \end{tikzcd}
    \ ,
    \]
    which unpacks to the definition above. In more restricted settings, the original construction of this balanced tensor product appeared in \cite{ENO10FusionCategoriesHomotopy}.

    To take the relative tensor product of two right $\cC$-module categories, we need a way to turn one of them into a left module category. This is possible when $\cC$ is braided.

    \begin{defn}
      A \emph{braided monoidal category} $\cC$ is a monoidal category equipped with a natural isomorphism $\{\sigma_{X, Y} : X \ot Y \xrightarrow{\sim} Y \ot X\}_{X, Y \in \cC}$ satisfying the hexagon axioms of, e.g., \cite[Eqns.\,8.1,\,8.2]{EGNO15TensorCategories}.
    \end{defn}
    
    \begin{defn}
      \label{d-left-right-C-mod-cat}
      Let $\cC$ be a braided monoidal category with braiding $\sigma$, and $\cM$ a right $\cC$-module category with modulator $\mu$. Then there are left module category structures $(\cM, \rhd, \nu^{\pm})$ with action 
      \[
        V \rhd X := X \lhd V
      \]
      and modulator
      \[
        \nu^{\pm}_{V, W, X} = \mu_{X, W, V} \circ (\id_X \lhd 
        \sigma^{\pm})
        : (V \ot W) \rhd X \xrightarrow{\sim} V \rhd (W \rhd X) \ ,
      \]
where $\sigma^+ = \sigma_{V,W}$ and $\sigma^- = \sigma^{-1}_{W, V}$. 
      There are similarly two different right module categories which can be formed from any left module category, again given by a choice of whether to use the usual or inverse braiding of $\cC$ to define a modulator.
    \end{defn}

    \begin{conv}
      \label{conv-positive-br-for-rel-tp}
      We will take the convention that, whenever two right $\cC$-module categories $\cM, \cN$ are considered, the category $\cN$ is regarded as a left module category via $\nu^{+}$ for the purposes of forming the relative tensor product $\cM \bt_{\cC} \cN$.
    \end{conv}

    A (not necessarily braided) monoidal category is said to be \emph{rigid} if every object has both left and right duals (see e.g.\ \cite[\S 2.10]{EGNO15TensorCategories}).
    
    \begin{defn}
      A \emph{twist isomorphism} on a braided monoidal category $\cC$ is a natural isomorphism $\theta \in \Aut(\id_{\cC})$ satisfying
      \[
        \theta_{X \ot Y} = (\theta_X \ot \theta_Y) \circ \sigma_{Y, X} \circ \sigma_{X, Y}.
      \]
      If $\cC$ is rigid and is equipped with a twist such that
      \[
        \theta_{X^{\vee}} = (\theta_X)^{\vee}
      \]
      for all $X \in \cC$, then $\cC$ is called a \emph{ribbon category}.
    \end{defn}
    
    Recall a rigid monoidal category is called \emph{pivotal} if equipped with a monoidal natural isomorphism $p : \Id \to (-)^{\vee \vee}$. Clearly in a pivotal category, left and right duals coincide up to isomorphism. Moreover, any ribbon tensor category has a canonical pivotal structure induced by the twist (see e.g.\ \cite[p. 218]{EGNO15TensorCategories}).
    
    In the presence of a ribbon structure, we can also define natural module category structures on the opposite category $\cM^{\op}$ of a $\cC$-module category.

    \begin{defn}
      \label{d-module-structure-on-op}
      Let $\cC$ be a ribbon category and $(\cM, \lhd, \mu)$ a right $\cC$-module category. There is a left $\cC$-module category $(\cM^{\op}, \rhd, \nu)$ defined by
      \[
        V \rhd X := X \lhd V^{\vee}
      \]
      and
      \[
         \nu_{V, W, X} : (V \ot W) \rhd X = X \lhd (V \ot W)^{\vee} \xrightarrow{\sim} X \lhd (W^{\vee} \ot V^{\vee}) \xrightarrow{\mu^{-1}} (X \lhd W^{\vee}) \lhd V^{\vee} = V \rhd (W \rhd X)
      \]
      where $\mu^{-1}$  is regarded as a morphism in $\cM^{\op}$ in the above.
    \end{defn}

    \begin{conv}
      \label{conv-op-as-right-module}
      Let $\cC$ be a ribbon category. Given a right $\cC$-module category $\cM$, then unless otherwise stated we regard $\cM^{\op}$ as a right $\cC$-module category by first regarding it as a left $\cC$-module category as in Definition~\ref{d-module-structure-on-op}, and then converting this to a right $\cC$-module category as in Definition~\ref{d-left-right-C-mod-cat} using the inverse braiding convention for the modulator.
    \end{conv}

Note that by using the inverse braiding, when combined with Convention~\ref{conv-positive-br-for-rel-tp} for turning a right module category into a left one, the resulting left $\cC$-module structure on $\cM^{\op}$ just involves duals and no braidings.

    For the construction of the defect TQFT in Section~\ref{s-defect-RT}, we need the following type of braided monoidal category:

    \begin{defn}
      A \emph{modular fusion category} is a ribbon category $\cC$ which is moreover a finite semisimple abelian category, has simple monoidal unit, and has 
      \[
        Z_2(\cC) \simeq \Vect_{\mathbf{k}}
      \]
      where $Z_2(\cC) = \{ X \in \cC : \forall Y \in \cC, \sigma_{Y, X} \circ \sigma_{X, Y} = \id\}$ is the M\"{u}ger centre of $\cC$.
    \end{defn}

    \begin{notn}
      \label{n-Irr-coend}
      When $\cC$ is a modular fusion category, we will denote by $\Irr(\cC)$ the set of isomorphism classes of simple objects of $\cC$. The \emph{canonical coend} of $\cC$ is the object $\cL = \bigoplus_{i \in \Irr(\cC)} X_i^{\vee} \ot X_i$ which universally receives a morphism from every $X^{\vee} \ot X$. 
    \end{notn}

    Recall that when $\cC$ is a finite semisimple monoidal category with simple monoidal unit, and $\cM$ a finite semisimple module category, then $\cM \simeq A\Mod_{\cC}$ as $\cC$-module categories, for some algebra object $A$ \cite[Thm.\,1]{Ost01ModuleCategoriesWeak}. In our setting, we will consider module categories of the form $\cM \simeq A\Mod_{\cC}$ where $A$ has moreover the structure of a $\Delta$-separable symmetric Frobenius algebra, which corresponds to a $\cC$-module trace on $\cM$ (see \cite{Sch13TracesModuleCategories} for this notion). The structure of $\Delta$-separable Frobenius algebra does not need semisimplicity for its definition.

    \begin{defn}
      Let $\cC$ be a ribbon category. A \emph{Frobenius algebra} in $\cC$ is a tuple $(A, \mu, \eta, \Delta, \epsilon)$ such that $(A, \mu, \eta)$ is an associative algebra object in $\cC$, $(A, \Delta, \epsilon)$ is a coassociative coalgebra object, and the Frobenius property 
      \[
        \includesvg[width=6cm]{Frob.svg}
      \]
      holds (here using the graphical calculus of $\cC$, interpreting vertices as $\mu$ or $\Delta$ by the orientations of incident edges). A Frobenius algebra is called \emph{symmetric} if 
      \[
        \epsilon \circ \mu = \epsilon \circ \mu \circ \sigma_{A, A} \circ (\id_A \ot \theta_A).
      \] 
      A Frobenius algebra is called \emph{$\Delta$-separable} if
      \[
        \mu \circ \Delta = \id_A.
      \]
    \end{defn}

    Recall that given algebra objects $(A_i, \mu_i, \eta_i)_{i \in \{1, 2\}}$, we have an algebra structure on $A_1 \ot A_2$ with multiplication given by
    \[
      \includesvg[width=4cm]{prod-mult.svg}
    \]
    and unit $\eta_1 \ot \eta_2$. Iteratively, we can form the tensor product $A_1 \ot \dots \ot A_n$ of several algebras.

    \begin{defn}
      Let $\cC$ be a braided tensor category and $(A_i)_{i = 1}^n$ algebra objects in $\cC$. Then a right (resp. left) \emph{multimodule} over the algebras $(A_i)_{i = 1}^n$ is a right module object for the algebra $A_1 \ot \dots \ot A_n$. 
    \end{defn}

    It is shown in \cite[Definition-Lemma 2.2]{CRS18LineSurfaceDefects} that a multimodule structure on $X$ is equivalent to $X$ having the structure of an $A_i$-module for $1 \leq i \leq n$ and where, for all $1 \leq i < j \leq n$, we have 
    \[
      \includesvg[width=6cm]{multimodule.svg}.
    \]

    \subsection{Topology}

    All manifolds are assumed smooth and oriented and denoted by capital letters.

    \begin{conv}
      We use the \emph{inward normal first} convention to orient the boundary of an $n$-manifold compatibly. Then given a $3$-manifold $M$ with boundary $\Surface$, the oriented bundles $\R \oplus T\Surface$ and $TM\res_{\Surface}$ are isomorphic by identifying the standard framing of the constant bundle $\R$ with the inward-pointing normal. A positively oriented framing of $T\Surface$ will be denoted in the coordinates $(y, z)$ and a positively oriented framing of $TM$ will be denoted in coordinates $(x, y, z)$.
    \end{conv}

    \begin{defn}
      \label{d-strat-manifold}
      A \emph{stratified $n$-manifold} is an $n$-manifold $M$ without boundary, equipped with a filtration $M = F_n \supset F_{n-1} \supset \dots \supset F_0 \supset \emptyset$ such that
      \begin{itemize}
        \item Each $F_j \backslash F_{j-1}$ is a union of finitely many smoothly embedded connected $j$-manifolds, called \emph{$j$-strata}, each of which is oriented (and the orientation agrees with that of $M$ for $j = n$).
        \item For $s$ an $i$-stratum and $t$ a $j$-stratum with $s \cap \overline{t} \neq \emptyset$, then $s \subset \overline{t}$. In this case necessarily $i < j$ and we say $s$ and $t$ are \emph{incident} to each other.
      \end{itemize} 
    \end{defn}

    \begin{defn}
      \label{d-strat-manifold-boundary}
      A \emph{stratified $n$-manifold with boundary} is an $n$-manifold $M$, possibly with boundary, with a filtration as in Definition~\ref{d-strat-manifold} making $\interior{M}$ a stratified manifold, and such that each stratum satisfies $\partial s = s \cap \partial M$. In particular, this makes $\partial M$ a stratified manifold.
    \end{defn}

    \begin{defn}
      A map $f : M \to N$ of stratified manifolds with boundary is a smooth map that sends strata to strata.
    \end{defn}

    \begin{rmk}
      The definitions above match those of \cite[\S 2.1]{CRS19OrbifoldsNdimensionalDefect}, except that in \cite{CRS19OrbifoldsNdimensionalDefect} the ambient manifold is considered as a topological manifold, whereas we consider the ambient manifold as a smooth manifold. This is so that we can use tangential structures to keep track of how strata (and later ribbon graphs) interact with 0-strata. In \cite{CRS19OrbifoldsNdimensionalDefect} the ambient dimension is arbitrary, whereas we only work in dimensions 2 and 3, where all topological manifolds admit a canonical smoothing.
    \end{rmk}

    We only consider stratified manifolds satisfying a regularity condition on the possible local neighbourhoods at each point, and equipped with suitable framing data. The regularity condition can be defined generally in an inductive way \cite[\S 2.2]{CRS19OrbifoldsNdimensionalDefect}, but we give it directly in the dimension $2$ (and in dimension $3$ in \S \ref{ss-defect-bord}). 

    \begin{figure}
      \centering
      \includesvg[width=12cm]{nbhds.svg}
      \caption{The possible neighbourhoods of points for a regular stratified surface. Note that the rightmost example is one of an infinite family of such neighbourhoods, where arbitrarily many line defects may meet a point defect in a star-like configuration, all having arbitrary orientation.}
      \label{f-nbhds}
    \end{figure}

    \begin{defn}
      \label{d-reg-strat-surf}
      A \emph{regular stratified surface} is a closed, compact stratified surface such that each point of $\Surface$ has a neighbourhood which is diffeomorphic (as a stratified manifold) to one of the open sets shown in Figure \ref{f-nbhds}. We denote a regular stratified surface by $(\Surface, \mathbf{L}, \mathbf{P})$, for $\mathbf{L}$ the set of 1-strata and $\mathbf{P}$ the set of 0-strata.
    \end{defn}
  
    To specify the framing data we require, we must carefully treat 1-strata. 

    \begin{defn}
      \label{d-almost-embedded}
      Let $M$ be a smooth manifold and  $e : [0, 1] \to M$ a continuous map where possibly $e(0) = e(1)$, such that 
      \begin{itemize}
        \item $e \res_{(0, 1)}$ is a smooth embedding,
        \item if $e(0) = e(1)$, then $e$ descends to a continuous embedding of $S^1$ in $M$, and otherwise $e$ is a continuous embedding of $[0, 1]$ in $M$.
      \end{itemize}
      We call such a map an \emph{almost-embedded 1-manifold}, and write $e$ also for the image of this map.
    \end{defn}

    For $e$ an almost-embedded 1-manifold with $e(0) = e(1) = P$, we note that each segment 
$e([0,\epsilon))$ and $e((1-\epsilon,1])$    
    incident to $P$ is separately smoothly embedded, and we can still talk about the tangent vector and normal bundle for each segment separately. We can therefore still talk of the tangent and normal bundles for $e$ as bundles over $[0, 1]$, except that they do not embed into the tangent bundle of $M$, and when talking of the tangent vector etc at $v$ we must really specify a segment of $e$ at $P$. In the sequel we do so without further comment.

    \begin{figure}
      \centering
      \includesvg[width=8cm]{tangents.svg}
      \caption{At a framed point $P$ (shown as an enlarged dot with its framing $(y, z)$), a segment $s$ (green) is incident. Whether $s$ is incoming or outgoing depends on whether the tangent vector $\partial_s$ makes a negative or positively oriented basis for the tangent space when combined with $z$. Inother words, $z$ bisects the tangent space into an incoming and an outgoing direction, with $y$ pointing to the outgoing direction. The sign $\epsilon$ of $s$ at $P$ records whether the orientation of $s$ agrees with that of $\partial_s$ or not.}
      \label{f-tangents}
    \end{figure}

    \begin{defn}
      \label{d-tangent-vectors}
      Given a point $P$ in a manifold $M$, and a segment $s$ of an almost-embedded 1-manifold incident to $P$, this defines a tangent vector $\partial_s \in T_P M$. If $s$ is oriented, we define the \emph{sign} of $s$, denoted $\epsilon$, to be $1$ if $\partial_s$ and $s$ are similarly oriented, and $-1$ if they are oppositely oriented. We call $\partial_s$ the \emph{tangent vector} of $s$ at $P$ and $\epsilon \partial_s$ the \emph{oriented tangent vector} of $s$ at $P$. See Figure \ref{f-tangents}.
    \end{defn}

    \begin{defn}
      \label{d-p-fr-strat-surf}
      A \emph{p-framed} (short for point-framed) stratified surface is a regular stratified surface $(\Surface, \mathbf{L}, \mathbf{P})$ such that  
      \begin{itemize}
        \item each $P \in \mathbf{P}$ is equipped with a positively oriented framing $(y_P, z_P)$ of $T_P\Surface$
        \item at each $P \in \mathbf{P}$, the tangent vectors of incident segments of 1-strata are all distinct and not colinear with $z_P$.
      \end{itemize}
    \end{defn}

    \begin{defn}
      \label{d-in-out-surf}
      Let $(S, \mathbf{L}, \mathbf{P})$ be a p-framed stratified surface, and $P$ be a 0-stratum with framing $(y_P, z_P)$ and $s$ a segment incident to $P$. We say that $s$ is \emph{incoming} to $P$ if $(\partial_s, z_v)$ is a negatively oriented basis of $T_P \Surface$ and is \emph{outgoing} from $P$ if $(\partial_s, z_v)$ is a positively oriented basis. See Figure \ref{f-tangents}.
    \end{defn}

    \begin{defn}
      \label{d-standard-ordering-surf}
      The set of incoming interior segments at a 0-stratum $P$ are ordered by increasing $z_P$-coordinate of their normalized tangent vectors, as are the set of outgoing boundary segments. In the sequel, we will call this the \emph{standard ordering} on whichever of these sets of segments is under consideration.
    \end{defn}

    \begin{defn}
      A \emph{boundary-stratified $3$-manifold} is a $3$-manifold whose boundary $\partial M$ is a p-framed stratified surface.
    \end{defn}

    \begin{figure}
      \centering
      \includesvg[width=10cm]{cake-pics.svg}
      \caption{The solid tori of Example \ref{eg-solid-tori}. Left: $K_{-}$. Right: $K_{+}$. The longer framing vector at the point defect is the $y$-vector, and the shorter vector is the $z$-vector.}
      \label{f-solid-tori}
    \end{figure}

     \begin{eg}
      \label{eg-solid-tori}
      Consider a 2-torus $T$ with a chosen basepoint $P$, and let $\alpha, \beta$ be generators of $\pi_1(T^2, P) \cong \Z\alpha \oplus \Z\beta$. We consider the $p$-framed stratified surface $(T^2, L, P)$ which is a 2-torus with a single 1-stratum $L$ homotopic to $\alpha$, where $P$ is endowed with a framing $(y, z)$ such that the first segment of $L$ after $z$ is oriented outwards from $P$, and the next segment is oriented inwards, in the standard ordering, and both segments are incoming at $P$. We consider two mapping classes of diffeomorphisms $\delta_{\pm} : T^2 \xrightarrow{\sim} \partial(D^2 \times S^1)$ realizing $(T^2, L, P)$ as the boundary of two different boundary-stratified 3-manifolds. Here, $\delta_{+}$ maps $\alpha$ to a longitude of $D^2 \times S^1$, and $\beta$ to the meridian; $\delta_{-}$ maps $\alpha$ to the meridian of $D^2 \times S^1$, and $\beta$  to a longitude. We denote the respective boundary-stratified 3-manifolds by $K_{\pm}$. See Figure \ref{f-solid-tori}.
    \end{eg}

    \section{Defect skein modules}
    \label{s-defect-skein}

    We define the defect skein module of a boundary-stratified 3-manifold $M$. We first define the notion of a ribbon graph in such a manifold. We then describe how the strata are marked by algebraic data (becoming \emph{defects}), and how ribbon graphs are compatibly coloured. 
    We impose local relations on our coloured ribbon graphs, the \textit{skein relations}, which results in the skein module of a defect manifold. 
    We identify important special cases of these relations, and close with some example calculations.

    \subsection{Ribbon graphs in stratified manifolds}

    \begin{defn}
      By a \emph{graph} we mean a collection of edges and vertices, where edges are oriented and end at either 0, 1 or 2 vertices. By an \emph{embedded graph} $\Gamma$ in a smooth manifold $M$, we mean an injective map $\Gamma \hookrightarrow M$ under which each edge is either a smoothly embedded circle or an almost-embedded 1-manifold (in the sense of Definition~\ref{d-almost-embedded}).
    \end{defn}

    \begin{defn}
      \label{d-well-framed}
      Given an almost-embedded 1-manifold $e$ in an oriented 3-manifold $M$, a \emph{framing} of $e$ is a trivialization $\nu_e$ of the normal bundle of $e \hookrightarrow M$. Suppose $p$ is a point which meets $e$ and is equipped with a positively oriented framing $(x_p, y_p, z_p)$ of $T_pM$. We say $e$ is \emph{well-framed} with respect to the framing at $p$ if it is equipped with a framing $\nu_e$ such that $(\nu_e, \epsilon \partial_e, z_p)$ is a positively oriented framing of $T_pM$, where $\epsilon \partial_e$ is the oriented tangent vector of $e$ at $p$.
    \end{defn}

    \begin{defn}
      \label{d-strat-rib-graph}
      Let $M$ be a boundary-stratified 3-manifold. We say a \emph{ribbon graph} is an embedded graph $\Gamma$ in $M$ such that 
      \begin{itemize}
        \item The intersection $\Gamma \cap \partial M$ is precisely along the strata $\bigcup_{\mathbf{L}} L \cup \bigcup_{\mathbf{P}} P$, with the orientation of edges agreeing with the orientation of the 1-strata which they meet.
        \item Every vertex $v$ is equipped with a positively oriented framing $(x_v, y_v, z_v)$ of the tangent space $T_v M$.
        \item At every 0-stratum $P$ having framing $(y_P, z_P)$, there is a vertex $v$ whose framing agrees with $(\nu_P, y_p, z_P)$ for $\nu_P$ the inward-pointing normal at $P$.
        \item Every edge $e$ is equipped with a framing such that $e$ is well-framed with respect to the framings of the vertices to which it is incident.
        \item For every segment $s$ incident to a vertex $v$, the tangent vector $\partial_s$ is in the  $x_v y_v$-plane, unless $s$ lies in $\partial M$ (in which case the tangent vector necessarily lies in the $y_v z_v$-plane). 
        \item For every vertex $v$ in the interior of a line defect, with framing $(x_v, y_v, z_v)$, the vector $y_v$ agrees with the signed tangent vector $\epsilon \partial_s$ of the boundary segments incident to $v$ (there are precisely two such segments, and they have the same signed tangent vector).
        \item At every vertex $v$, tangent vectors $\{\partial_s\}$ of all incident segments are distinct and not colinear with $x_v$.
      \end{itemize}
      See Figures \ref{f-graph-coupon} and \ref{f-approach-planes} for examples of vertices and incident edges in a ribbon graph.
    \end{defn}

    \begin{notn}
      When $\Gamma$ is a ribbon graph, we call the edges and vertices which are contained in the boundary of $M$ 
      the \emph{boundary edges/vertices/segments}, and all other ribbons and coupons the \emph{interior edges/vertices/segments}.
    \end{notn}

    \begin{defn}
      By an \emph{isotopy} of ribbon graphs, we mean an ambient isotopy of $M$ which fixes the 0-strata, and fixes the 1-strata non-pointwise. In particular, in the interior of $M$ this is just an ambient isotopy, while in the boundary, vertices may be isotoped along the stratum in which they sit. In the sequel, we will always mean such an isotopy when discussing ribbon graphs.
    \end{defn}

    We saw in Definitions \ref{d-in-out-surf} and \ref{d-standard-ordering-surf} the notion of incoming/outgoing line defects and the standard ordering of line defects at a point defect. This notion can be extended to vertices of a ribbon graph.

    \begin{defn}
      \label{d-in-out}
      Let $v$ be a vertex of a ribbon graph with framing $(x_v, y_v, z_v)$ and $s$ a segment incident to $v$. We say that $s$ is \emph{incoming} to $v$ if $(x_v, \partial_s, z_v)$ is a negatively oriented framing of $T_v M$ and is \emph{outgoing} from $v$ if $(x_v, \partial_s, z_v)$ is a positively oriented framing. See Figure \ref{f-graph-coupon}.
    \end{defn}

    \begin{defn}
      \label{d-standard-ordering}
      The set of incoming interior segments at a vertex $v$ (where $v$ may be in the boundary or not) are ordered by increasing $x_v$-coordinate of their normalized tangent vectors, as are the set of outgoing interior segments. 
      Where $v$ lies at a point defect, then the set of incoming boundary segments are ordered as in Definition \ref{d-standard-ordering-surf}, as are the set of outgoing boundary segments. In the sequel, we will call this the \emph{standard ordering} on whichever of these sets of segments is under consideration. See Figures \ref{f-graph-coupon} and \ref{f-approach-planes}.
    \end{defn}

    \begin{notn}
      \label{n-ribbons-and-coupons}
      We can thicken edges in the direction of the framing into embedded rectangles, called \emph{ribbons}. At vertices which are not at 0-strata, we can insert a small copy of $[0, 1] \times [0, 1]$ in the $x_v y_v$-plane, and connect incoming ribbons to the bottom edge, and outgoing ribbons to the top edge. The ribbons are connected in the standard ordering. See Figure \ref{f-graph-coupon}. This alternative ribbons-and-coupons description is standard in much of the skein theory literature, and so we will also refer to the edges of ribbon graphs as \emph{ribbons}, and the vertices as \emph{coupons}, and will sketch the ribbon graphs in this formalism where convenient. We may also omit framings from some diagrams where appropriate.
    \end{notn}

    \begin{figure}
      \begin{subfigure}{0.48\textwidth}
        \centering
        \includesvg[width=5cm]{graphs-coupons-1.svg}
        \caption{An interior vertex of a ribbon graph, where the $z$-coordinate is
        pointing out of the page.}
        \label{f-graphs-coupons-1}
      \end{subfigure}
      \begin{subfigure}{0.48\textwidth}
        \centering
        \includesvg[width=5cm]{graphs-coupons-2.svg}
        \caption{The same arrangement described in terms of ribbons and coupons.}
        \label{f-graphs-coupons-2}
      \end{subfigure}
     
      \caption{(\ref{f-graphs-coupons-1}) shows an interior vertex of a ribbon graph, in the sense of Definition \ref{d-strat-rib-graph}. The edges 1 and 2 are outgoing, and are numbered in the standard ordering. There are only 2 edges attached as incoming, but 3 incoming segments, numbered 3, 4, 5 in the standard ordering, with segments 3 and 4 coming from the same edge. The framing of each edge is indicated. (\ref{f-graphs-coupons-2}) shows the same vertex in the ribbons-and-coupons representation, see Remark \ref{n-ribbons-and-coupons}.}
      \label{f-graph-coupon}
    \end{figure}

    \begin{figure}
      \centering
      \includesvg[width=6cm]{approach-planes-cleaned.svg}
      \caption{A vertex at a point defect. Boundary edges (green) necessarily approach in the plane which is tangent at the boundary, shaded green, while interior edges (orange) approach in the orthogonal $xy$ plane, shaded orange. The edges 1, 2, 3 are incoming while edges 4,  5, 6  are outgoing. The standard ordering on incoming interior edges is 2 then 3; the standard ordering on outgoing boundary edges is 5 then 4.}
      \label{f-approach-planes}
    \end{figure}

    \subsection{Markings and colourings}  

    We now describe how to decorate the stratification of $M$, and more generally, ribbon graphs in $M$, with algebraic data. We first explain the decorations for strata, which we call markings. We will organize the possible markings into a system $\mathfrak{D}^{\cC}$, where we have modified the definitions introduced in \cite{CRS19OrbifoldsNdimensionalDefect, CMS203dimensionalDefectTQFTs}. We refer to a marked stratum as a \emph{defect}. We then go on to define decorations for ribbon graphs which are compatible with a fixed marking, which we call \emph{colourings} of the defect graph.

\medskip 

Recall that by convention all categories and functors are in $\Rex$.

  \begin{defn}
    Let $\cC$ be a ribbon category.  
    We define the associated \emph{defect system} $\mathfrak{D}^{\cC}$ to be the following tuple:
    \begin{itemize}
      \item $\mathfrak{D}^{\cC}_3 = \{ \cC \}$
      \item $\mathfrak{D}^{\cC}_2 = \{ \text{right }\cC\text{-module categories}\}$
      \item $\mathfrak{D}^{\cC}_1 = \coprod_{(M, N) \in \Z_{\geq 0}^2} \mathfrak{F}_{M, N}$ where
      \begin{align*}
        \mathfrak{F}_{M,N} = \{ ((\cM_i)_{i = 1}^M, &(\cN_j)_{j = 1}^N, (F, \rho)) \mid \cM_i, \cN_j \in \cD^{\cC}_2,\\ &(F, \rho) \in \mathrm{Fun}_{\cC}(\cM_1 \bt_{\cC} \dots \bt_{\cC} \cM_M, \cN_1 \bt_{\cC} \dots \bt_{\cC} \cN_N)\}.
      \end{align*}
      For $M$ or $N = 0$ we have the convention that the $\cC$-relative tensor product of $0$ $\cC$-module categories is $\cC$.
    \end{itemize}
  \end{defn}

  \begin{defn}
    Write $T_M(A)$ for the set of length $M$ tuples of elements of a set $A$. Then the \emph{source map} for $\mathfrak{D}^{\cC}$ is the map
    \begin{align*}
      \mathfrak{s} : \mathfrak{D}^{\cC}_1 &\to \coprod_{M \in \Z_{\geq 0}} T_M(\mathfrak{D}^{\cC}_2)\\
      ((\cM_i)_{i = 1}^M, (\cN_j)_{j = 1}^N, (F, \rho)) &\mapsto (\cM_i)_{i = 1}^M.
    \end{align*}
    The \emph{target map} is the map 
    \begin{align*}
      \mathfrak{t} : \mathfrak{D}^{\cC}_1 &\to \coprod_{N \in \Z_{\geq 0}} T_N(\mathfrak{D}^{\cC}_2)\\
      ((\cM_i)_{i = 1}^M, (\cN_j)_{j = 1}^N, (F, \rho)) &\mapsto (\cN_i)_{i = 1}^N.
    \end{align*}
  \end{defn}

   Let us fix once and for all a ribbon category $\cC$.

    \begin{defn}
      \label{d-D^c-marking}
      Given a p-framed stratified surface $\Surface$ 
      (or given a boundary-stratified 3-manifold with boundary $\Surface$) with sets of 1-strata $\mathbf{L}$ and 0-strata $\mathbf{P}$, we say a \emph{$\mathfrak{D}^{\cC}$-marking} is a choice of:
      \begin{itemize}
        \item For each $L \in \mathbf{L}$, a right $\cC$-module category $\cM \in \mathfrak{D}^{\cC}_2$.
        \item For each $P \in \mathbf{P}$ 
        with $M$ incoming segments $(L_i)_{i = 1}^M$ of 1-strata, and $N$ outgoing segments $(L'_j)_{j = 1}^N$ (each tuple ordered in the standard ordering, Definition \ref{d-standard-ordering-surf}), a choice of $D \in \mathfrak{D}^{\cC}_1$ where $\mathfrak{s}(D) = (\cM_{i}^{-\epsilon_{i}})_{i = 1}^M$ and $\mathfrak{t}(D) = (\cN_{j}^{\epsilon_{j}})_{j = 1}^N$. Here $\epsilon$ is the sign of the segment as in Definition \ref{d-tangent-vectors}, and $\cM^1 = \cM, \cM^{-1} = \cM^{\op}$.
        We will usually refer to $D = ((\cM_i)_{i = 1}^M, (\cN_j)_{j = 1}^N, (F, \rho))$ by the functor $(F, \rho)$, sometimes omitting the module coherence $\rho$.     
      \end{itemize}
      We denote the defect marking as $\mathbf{D} = (\{(L_i, \cM_i)\}_{L_i \in \mathbf{L}}, \{(P_j, F_j)\}_{P_j \in \mathbf{P}})$, and refer to the collection $\SSigma = (\Surface, \mathbf{D})$ as a $\mathfrak{D}^{\cC}$-marked manifold, using blackboard bold for defect-marked manifolds. We call marked 0-strata \emph{point defects} and marked 1-strata \emph{line defects}.
    \end{defn}

    \begin{eg}
      \label{eg-marked-solid-tori}
      Let $K_{\pm}$ be the boundary-stratified solid tori of Example \ref{eg-solid-tori}. For $\cM$ any choice of $\cC$-module category,   and  $F : \cM^{\op}  \bt_{\cC}  \cM \to \cC$ a $\cC$-module functor, then $\mathbf{D} = ((L, \cM), (P, F))$ is a defect marking, for $L$ the unique 1-stratum of $K_{\pm}$  and $P$ the unique 0-stratum. We refer to the resulting $\mathfrak{D}^{\cC}$-marked manifolds as $\mathbb{K}_{\pm}$. 
    \end{eg}

    \begin{figure}
      \centering
      \includesvg[width=8cm]{skein-example.svg}
      \caption{An example of a section of a coloured ribbon graph. Edges and vertices  are coloured according to the category (or functor) marking the stratum in which they lie.}      
      \label{f-point-defect-config}
    \end{figure}

    \begin{defn}
      \label{d-def-col-graph}
      Let $\M = (M, \mathbf{D})$ be a boundary-stratified $\mathfrak{D}^{\cC}$-marked 3-manifold. Given a ribbon graph $\Gamma$ in $M$, we say a \emph{colouring} is a choice of:
      \begin{enumerate}
        \item For each ribbon $e$ in $\Gamma \cap \interior{M}$, an object of $\cC$.
        \item For each ribbon $e$ which is contained in 
        a line defect $(L, \cM)$, an object of $\cM$. 
        \item For each coupon $v$ in $\Gamma \cap \interior{M}$, a choice of morphism in
        \[
          \Hom_{\cC}(\bigotimes_{i} V_i^{-\epsilon_i}, \bigotimes_j W_j^{\epsilon_j})
        \]
        where: $V_i$ are the objects colouring incoming segments and $W_j$ colour the outgoing segments; the tensor products are ordered according to the standard ordering; the $\epsilon_i$ are the signs of the segments, and $V_i^{1} = V_i, V_i^{-1} = V_i^{\vee}$ etc.
        \item For each coupon meeting $\partial M$ away from point defects, with incoming ribbons coloured by $V_i  \in \cC, X \in \cM$ and outgoing edges coloured by $W_i \in \cC, Y \in \cM$, a choice of morphism in 
        \[
        \Hom_{\cM}(X \lhd \bigotimes_{i} V_i^{-\epsilon_i}, Y \lhd \bigotimes_j W_j^{\epsilon_j})
        \]
        with ordering and duality conventions as above.
        \item For each point defect $(P, F)$ with incoming boundary ribbons labelled by objects $(X_i)_{i=1}^M$, incoming interior ribbons labelled by objects $(V_k)_{k=1}^K$, outgoing boundary ribbons labelled by $(Y_j)_{j = 1}^N$, and outgoing interior ribbons labelled by $(W_i)_{l=1}^L$, a choice of morphism
        \[
          f :  F(X \lhd V) \to Y \lhd W.
        \]
        Here $X = X_1 \bt_{\cC} \dots \bt_{\cC} X_M$, and $V = \bigotimes_{k} V_k^{-\epsilon_k}$, both using the standard ordering. The symbols $Y$ and $W$ stand for similar products.
        When $M$ (resp. $N$) is 0, we have that the source (resp. target) of $F$ is $\cC$, and then we require a morphism $f$ as above with $X$ (resp. $Y$) equal to $\One_{\cC}$.
        When $M = N = 0$, we
        require that the point defect is coloured by $\id_{F(\One)}$.
      \end{enumerate}
      We say a ribbon graph equipped with a colouring is \emph{coloured}.
    \end{defn}

     \begin{figure}
      \centering
      \includesvg[width=8cm]{turaev-skeins.svg}
      \caption{Left: a marked arc (pink) meeting a ribbon from the interior (orange). Right: the arc is replaced by a point defect, given a framing, and the ribbon framing modified to match, as in Remark \ref{rk-0-point-defects-are-markings}.}
      \label{f-turning}
    \end{figure}

    \begin{rmk}
      \label{rk-0-point-defects-are-markings}
      An important situation considered in the skein theory literature is where $M$ does not have a stratification but has a collection of oriented arcs in the boundary labelled by objects of $\cC$, and we consider ribbon graphs which may end compatibly at these arcs. See e.g.\ \cite{Tur10QuantumInvariantsKnots}. In our situation, we can consider such marked arcs as point defects. For $s$     
      an arc which meets an inward-oriented ribbon labelled $X$, let $P$ be the end of $s$ such that the sign $\epsilon$ of $\partial_s$ is $-1$ at $P$. We choose any tangent vector $z_P \in T_P M$ such that $(\nu_P, \epsilon \partial_s, z_P)$ is a positively oriented basis, for $\nu_P$ the inward normal. We mark $P$ by the functor $- \lhd X : \cC \to \cC$ where $X$ was the object labelling the arc and $\lhd$ is the regular right action of $\cC$ on itself. Then a ribbon graph which ended at this arc can be replaced by a ribbon graph in our sense by modifying the framing at $P$ to match our conventions, and adding the label $u_X$ at $P$, for $u$ the inverse to the left unitor of $\cC$. See Figure \ref{f-turning}. If the ribbon was outward-oriented at the arc $s$, we replace $X$ by $X^{\vee}$ in this construction. It follows that our definition subsumes the well-known definition of ribbon graphs ending at marked arcs.
    \end{rmk}

    \subsection{Valid cubes and skein relations}

    We now define the skein relations. These are \emph{local} relations implementing relations in the algebraic structures marking the manifold $M$. By locality, we mean that the relations should hold in embedded balls, which without loss of generality can be taken to be the interiors of links of points in $M$. The defects on the link of the point should specify the structure whose relations are to be implemented.

    We have the following cases:
    \begin{itemize}
      \item If $P$ is a point defect, its link contains incoming and outgoing line defects, which locally can be combined into a single incoming defect $\cM$ and outgoing defect $\cN$. The skein relations at $P$ should implement relations in $\Hom_{\cN}(F(-), -)$ where $F$ is the $\cC$-module functor marking $P$.
      \item If $P$ is a point on a line defect labelled $\cM$, its link contains points marked $\cM$, and the skein relations at $P$ should implement relations in $\Hom_{\cM}(-, -)$.
      \item If $P$ does not lie on point or line defects, then its link is an unstratified $\cC$-marked sphere. Then the skein relations at $P$ should implement the realtions in $\Hom_{\cC}(-, -)$.
    \end{itemize}

    The blanks in the above $\Hom$-space expressions should be filled in by the labels of ribbon graphs intersecting the link of $P$ transversely. Since we want to use $\Hom$-spaces, which have a natural notion of source and target, then instead of embedded balls we will use embedded cubes, where ribbon graphs intersect opposing faces which specify source and target. Examples of cubes corresponding to the three cases above are shown in the top row of Figure \ref{f-valid-cubes}
    
    We also note that each of the above cases is a special case of the last. The relations in $\Hom_{\cM}(-, -)$ are simply the relations in $\Hom_{\cM}(F(-), -)$ for $F$ the identity functor; the relations in $\Hom_{\cC}(-, -)$ are simply the relations in $\Hom_{\cM}(-, -)$ for $\cM = \cC$. We will therefore focus on defining nicely embedded cubes as links of point defects, and skein relations therein. Skein relations along line defects can then be obtained by formally inserting an identity-marked point defect, framed orthogonally to the tangent vector along the line defect. Skein relations on unstratified regions, including the interior, arise by inserting an $\Id_{\cC}$-marked point defect in our definition.

    \begin{figure}
      \centering
      \includesvg[width=9cm]{valid-cubes.svg}
      \caption{(a) The back row shows valid cubes around a point defect, a point in a line defect, and a point not lying in either kind of defect, respectively. The $yz$-face of the cube is shaded. (b) The front row shows examples of invalid cubes. In each case, the failure is due to the cube not being local enough.}
      \label{f-valid-cubes}
    \end{figure}

    \begin{defn}
      \label{d-ord-cube}
      Let $\M = (M, \mathbf{D})$ be a boundary-stratified $\mathfrak{D}^{\cC}$-marked 3-manifold. Let $C = \{ (x, y, z) \in \R^3 : x \in [0, 1], y, z \in [-1/2, 1/2] \}$ and $\mathbf{C} : C \hookrightarrow M$ be a cube embedded in $M$ such that:
      \begin{itemize}
        \item $\mathbf{C}$ either meets $\partial M$ in the $x = 0$ face or not at all.
        \item If $\mathbf{C}$ meets $\partial M$, then $\mathbf{C}$ meets a unique point defect $P$ at $\mathbf{C}(0, 0, 0)$ such that the pushforward under  $\mathbf{C}$ of the standard framing of $\R^3$ is the framing at $P$. We allow that $P$ is a formally inserted identity defect as described above.
        \item In the above situation, $\mathbf{C}$ only meets line defects which are incident to $P$. Incoming defects meet $\partial \mathbf{C}$ transversely along the edge $y = -1/2$, and outgoing defects meet $\partial \mathbf{C}$ transversely along the edge $y = 1/2$.
      \end{itemize}
      Call a cube satisfying these conditions \emph{valid}. See Figure \ref{f-valid-cubes} for examples and non-examples.
    \end{defn}

    \begin{figure}
      \centering
      \includesvg[width=10cm]{valid-cube.svg}
      \caption{An example of a valid cube, and a section of ribbon graph having good intersection. Note that the cube is parameterized so that the framing at the point defect also specifies the coordinates of the cube. We omit the $\mathfrak{D}^{\cC}$-colours of the interior graph and the point defect.}
      \label{f-valid-cube}
    \end{figure}

    For the purposes of the following definitions, if $\mathbf{C}$ is a valid cube that does not meet $\partial M$, we formally view the $x = 0$ face of $\mathbf{C}$ as a boundary face with an $\Id_{\cC}$-marked point defect at $\mathbf{C}(0, 0, 0)$.

    \begin{defn}
      Let $\mathbf{C}$ be a valid cube in $\M$ and $\Gamma$ a coloured ribbon graph in $\M$. We say that $\Gamma$ has \emph{good intersection} with $\mathbf{C}$ if: 
      \begin{itemize}
        \item $\Gamma \cap \interior{M}$ intersects $\partial \mathbf{C}$ in the $y = -1/2$ and $y = 1/2$ faces, where the $x$-coordinates of the intersections with each face are all distinct.
        \item $\Gamma \cap \mathbf{C}$ is well-framed with respect to its intersection points with the $y = \pm 1/2$-faces of $\mathbf{C}$, where these intersection points are framed by the pushforward of the standard framing of $\R^3$ under $\mathbf{C}$. These intersection points are not permitted to be vertices of $\Gamma$.
      \end{itemize}
    \end{defn}

    \begin{defn}
      \label{d-generic pos}
      Let $\mathbf{C}$ be a valid cube, and $\Gamma$ a coloured ribbon graph having good intersection with $\mathbf{C}$. Denote by $\pi$ the projection to the $xy$-plane of $\mathbf{C}$. We say that $\Gamma$ is in \emph{generic position} (with respect to $\mathbf{C}$) if:
      \begin{itemize}
        \item The only points of $\pi(\Gamma)$ with nontrivial fibres are either double points given by transverse crossings of two interior ribbons, or points with multiple line defects in the fibre.
        \item The vertices of $\Gamma$ have distinct $y$-coordinates.
        \item The tangent vector field along any edge is never colinear with the $z$-coordinate.
        \item Let $v_i$, $m \leq i < 0$ the vertices of $\Gamma$ meeting $\{x = 0, y < 0\}$, and $v_i$, $0 < i \leq n$ the vertices of $\Gamma$ meeting $\{x = 0, y > 0\}$, ordered by increasing $y$-coordinate. We require that, if $v_i$ is labelled by a morphism in $\cM_k$, and $v_j$ is labelled by a morphism in $\cM_l$, then if $k < l$ in the standard ordering, we have $i < j$, and similarly for coupons with positive $y$-coordinate.
        \item For $v_i$ as above, let $t_i$ the $y$-coordinate of $v_i$ under $\pi$. Let also $t_0 = 0$ and $v_0$ the vertex at the point defect. Then we require that for $m \leq i \leq n$, the only vertex in the intersection $\Gamma \cap \{y = t_i \}$ is $v_i$, and all segments not meeting $v_i$ have distinct $x$-coordinate in this intersection, and $\Gamma$ is well-framed with respect to all points in the intersection, where the points distinct from $v_i$ are framed by the push-forward of the standard framing on $\R^3$.
      \end{itemize}
    \end{defn}

    \begin{figure}
      \centering
      \includesvg[width=10cm]{generic-pos.svg}
      \caption{The ribbon graph $\Gamma$ of Figure \ref{f-valid-cube} can be put in generic position so that $\pi(\Gamma)$ is as shown here. Here and in the sequel, when graphs are shown in planar projection, we will assume that the framing of vertices is given by the standard framing with the $xy$-plane being the page, and the $z$-direction pointing out of the page, and we do not depict the framing vectors.}
      \label{f-valid-cube-project}
    \end{figure}

    It is clear that $\Gamma$ can always be brought into generic position by an isotopy in the interior of $\mathbf{C}$. Assume $\Gamma$ is in generic position with respect to $\mathbf{C}$. Write $\cM = \cM_1 \bt_{\cC} \dots \bt_{\cC} \cM_M$ for the source of the functor $F$ labelling the point defect, and $\cN = \cN_1 \bt_{\cC} \dots \bt_{\cC} \cN_N$ for the target. Let the boundary vertices be enumerated $v_i$ as above, with $y$-coordinates $t_i$, and let $t_{m-1} = -1/2, t_{n+1} = 1/2$. We describe an evaluation procedure for the projection $\pi(\Gamma)$ using the following notation (see Example \ref{eg-Ev-notation} and Figure \ref{f-valid-cube-project} for an illustration).
    \begin{itemize}
      \item Let $m-1 \leq i \leq n$, and let $\cX = \cM$ if $i < 0$ and $\cX = \cN$ if $i \geq 0$. The part of $\Gamma$ contained in      
      $\{0\} \times (t_i, t_{i+1}) \times [-1/2, 1/2]$ contains no vertices, and the edges of this part of $\Gamma$ define a unique object of $\cX$. We denote this object by $X_i$, and use this to label the part of $\pi(\Gamma)$ contained in $\{0\} \times (t_i, t_{i+1})$. In  the case that the point defect has no incoming (resp. outgoing) line defects, then we set $X_{-1}$ (resp. $X_0$) equal to  $\One_{\cC}$ in  our notation.
      \item For  $m-1 \leq i \leq n + 1$, the interior edges of $\pi(\Gamma)$ which intersect $(0, 1] \times \{ t_i\}$ define an object $U_i$ of $\cC$, which is the tensor product of the labels of the edges ordered by $x$-coordinate and signed with respect to their intersection point.
      \item For $m \leq i \leq n, i \neq 0$, the point $(0, t_i)$ is labelled by a morphism $f_i : X_{i-1} \lhd V_i \to X_i \lhd W_i$. For $i = 0$, the point $(0, 0)$ is labelled by a morphism $f_0 : F(X_{-1} \lhd V_0) \to X_0 \lhd W_0$. Here $X_i$ are as above. In our notation $V_i$ is the tensor product of the incoming interior edges at $f_i$, ordered by $x$-coordinate and signed with respect to the intersection with $(0, t_i)$; $W_i$ is the tensor product of outgoing interior edges at $f_i$, similarly ordered and signed. We also take the convention that $W_{m-1} = V_{n+1} = \One$.
      \item For each $m-1 \leq i \leq n$, the part of $\pi(\Gamma) \cap [t_i, t_{i+1}]$ which does not meet the $x = 0$ line describes a morphism $g_i$ in $\cC$ via the graphical calculus of $\cC$. Then in this segment we have a morphism
      \[
          \id_{X_i} \lhd g_i \in \Hom_{\cX}(X_i \lhd (W_i \ot U_i) , X_i \lhd (V_{i+1} \ot U_{i+1})).
      \]
      \item We also have for each $m \leq i \leq n, i \neq 0$ the morphism 
      \begin{align*}
        \tilde{f_i} &:= \mu^{-1}_{X_i, W_i, U_i} \circ (f_i \lhd \id_{U_i}) \circ \mu_{X_{i-1}, V_i, U_i}\\
        &\in \Hom_{\cX}(X_{i-1} \lhd (V_i \ot U_i), X_i \lhd (W_i \ot U_i))
      \end{align*}
      where $\mu$ is the appropriate modulator, and $f_i$ is the morphism labelling $v_i$.
      \item For $i = 0$, the corresponding morphism is 
      \begin{align*}
        \tilde{f_0} &:= \mu^{-1}_{X_{0}, W_0, U_0} \circ (f_0 \lhd \id_{U_0}) \circ \rho^{-1}_{X_{-1} \lhd V_0, U_0} \circ F(\mu_{X_{-1}, V_0, U_0})\\
        &\in \Hom(F(X_{-1} \lhd (V_0 \ot U_0)), X_0 \lhd (W_0 \ot U_0))
      \end{align*}
      where $\rho$ is the module coherence of $F$.
    \end{itemize}
    The composition of the morphisms $\id_{X_i} \lhd g_i, \tilde{f_i}$ for $i < 0$ in the ordering given by the $y$-coordinate defines a morphism 
    \[
      g_{< 0} :  X_{m-1} \lhd U_{m-1} \to X_{-1} \lhd (V_0 \ot U_0)
    \]
    in $\cM$. Similarly, composing morphisms $\tilde{f_i}, \id_{X_i} \lhd g_i$ for $i \geq 0$ gives a morphism
    \[
    g_{\geq 0} : F(X_{-1} \lhd (V_0 \ot U_0)) \to X_{n} \lhd U_{n+1}
    \] 
    in $\cN$. 

    \begin{defn}
      Let $\mathbf{C}$ be a valid cube, and $\Gamma$ be a coloured ribbon graph having good intersection with $\mathbf{C}$, with the point defect $P$ at $\mathbf{C}(0, 0, 0)$ marked by a $\cC$-module functor
      \[
        (F, \rho) : \cM \to \cN.
      \]
      We define the morphism
      \begin{align*}
        \Ev_{\mathbf{C}}(\Gamma) := g_{\geq 0} \circ F(g_{<0}) \circ \rho_{X_{m-1}, U_{m-1}} \in \Hom_{\cN}(F(X_{m-1}) \lhd U_{m-1}, X_n \lhd U_{n+1})
      \end{align*}
      by choosing
      an
    isotopy of $\Gamma$ relative to $\Gamma \cap \{y = \pm 1/2\}$ into generic position, with the indices and conventions as above.
    \end{defn}

    \begin{rmk}
      When $\mathbf{C}$ does not meet the boundary of $M$, the assignment $\Ev_{\mathbf{C}}$ is simply the evaluation functor defined by Reshetikhin and Turaev in \cite{RT90RibbonGraphsTheir}.
    \end{rmk}

    \begin{eg}
      \label{eg-Ev-notation}
      Consider the valid cube of Figure \ref{f-valid-cube}, and its projection shown on Figure \ref{f-valid-cube-project}.
      \begin{itemize}
        \item The boundary edges with negative $y$-coordinate are labelled in the projection by $X_{-3} = L_1 \bt M_1, X_{-2} = L_1 \bt M_2$ and $X_{-1} = L_2 \bt M_2$; $f_{-2} = \id_{L_1} \bt m$ and $f_{-1} = l \bt M_2$. 
        \item The boundary edges with positive $y$-coordinate in the projection are labelled by $X_0 = N_1, X_1 = N_2$ and $f_1 = n$. 
        \item The notation $U_i$ refers to the interior edges with positive $x$-coordinate at $y = t_i$, so $U_{-3} = U_{-2} = A \ot B$, $U_{-1} = C \ot D^{\vee} \ot B$, $U_0 = D^{\vee} \ot B$, $U_1 = D^{\vee}$, $U_2 = G$.
        \item The notation $V_i$ refers to the incoming interior edges at $f_i$, so $V_{-2} = V_{-1} = \One$, $V_0 = C$, $V_1 = B$.
        \item The notation $W_i$ refers to the outgoing interior edges at $f_i$, so  $W_{-2} = W_{-1} = W_0 = \One$, $W_1 = E$.
        \item The morphisms $g_i$ are determined by the graphical calculus for $\cC$, so that the non-identity morphisms in this example are $g_{-2} = h \ot \id_{B}$, $g_0 = \sigma^{-1}_{B, D^{\vee}}$, $g_1 = (\ev_E \ot \id_G) \circ k$.
      \end{itemize} 
    \end{eg}

    \begin{figure}
      \centering
      \includesvg[width=10cm]{coupon-slide.svg}
      \caption{To isotope $h$ past $f_i$, we first put the graph into the position  shown. We will then consider $g_i = h' \circ g_{i-1}''$ in evaluating the graph. We moreover note that $h' = \id_{V_i} \ot h''$ for $h''  : V_i^{\vee} \ot U' \to U_i$ the morphism indicated in the dashed box. Here and in the sequel, the planar projection is rotated so that the $x$-coordinate runs top to bottom and the $y$-coordinate runs left to right; orientations of edges are omitted when they do not serve a clear mathematical or expository purpose.}
      \label{f-coupon-slide}
    \end{figure}

    \begin{lemma}
      \label{l-ev well-defd}
      For $\mathbf{C}$ a valid cube, and $\Gamma$ having good intersection with $\mathbf{C}$, the morphism $\Ev_{\mathbf{C}}(\Gamma)$ is well-defined with respect to the choice of isotopy of $\Gamma$ into generic position.
    \end{lemma}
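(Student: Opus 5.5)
Any two isotopies of $\Gamma$ rel $\Gamma \cap \{y = \pm 1/2\}$ into generic position differ by an isotopy between two generic-position graphs, and such an isotopy can be perturbed to pass through generic position except at finitely many times. So it suffices to check that $\Ev_{\mathbf{C}}$ is invariant under a finite list of elementary moves between generic graphs, in the spirit of the Reshetikhin--Turaev argument. Isotopies may move boundary vertices along line defects but fix $P$. The combinatorial data of the projection $\pi(\Gamma)$ therefore changes only by the following moves:
\begin{enumerate}
\item the planar Reidemeister-type moves among interior ribbons and coupons, all happening inside a single strip $y \in (t_i, t_{i+1})$;
\item interior coupons or crossings changing their $y$-order relative to a boundary vertex $v_i$;
\item two boundary vertices exchanging $y$-order while lying on different line defects $\cM_k$, $\cM_l$ (each side of $P$ separately, and compatibly with the generic-position ordering condition);
\item interior edges sliding past a boundary vertex or past $P$ in the $x$-direction, which changes which edges contribute to $U_i$ versus $V_i, W_i$.
\end{enumerate}

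\textbf{Moves of type (1).} These change only $g_i$, and the resulting $g_i$ is unchanged by the theorem of Reshetikhin--Turaev that the graphical calculus of $\cC$ is isotopy invariant \cite{RT90RibbonGraphsTheir}. Hence $\id_{X_i}\lhd g_i$ is unchanged.

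\textbf{Moves of type (2) and (4).} The key step is the situation of Figure \ref{f-coupon-slide}. An interior morphism $h$ acting on $U$-strands commutes past $\tilde f_i$, because $\tilde f_i$ is built from $f_i \lhd \id_{U_i}$ conjugated by modulators, and $\mu$ is natural in the $\cC$-variables. The resulting identity is
\[
(\id_{X_i}\lhd(\id_{W_i}\ot h''))\circ \tilde f_i = \tilde f_i \circ (\id_{X_{i-1}}\lhd(\id_{V_i}\ot h'')).
\]
For $i=0$ the same argument works, additionally using naturality of $\rho$ and functoriality of $F$ to move morphisms through $F(\mu)$ and $\rho^{-1}$. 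Moving an edge from $U_i$ into $V_i$ or $W_i$ is handled the same way: the bent strand is rewritten with evaluation/coevaluation maps and duality signs, and the modulator pentagon absorbs the rebracketing. For $i=0$ one also uses the fact that $\rho$ intertwines the modulators. This also shows that $\Ev$ is independent of where exactly the $t_i$ are chosen.

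\textbf{Moves of type (3).} Vertices on different line defects act on different tensor factors of $\cM_1\bt_{\cC}\dots\bt_{\cC}\cM_M$, so they commute by bifunctoriality of $\bt_{\cC}$. Care is needed because the $\cC$-action on the relative product can be routed through either factor via the balancing. I would fix the convention that the action is on the last factor and use the balancing $\beta$, with Convention \ref{conv-positive-br-for-rel-tp}, to transport $V_i$-strands. This introduces braidings, which match the crossings in the projection that arise as line defects pass over each other in the fibre of $\pi$. These crossings are allowed by Definition \ref{d-generic pos}.

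\textbf{Main obstacle.} The hardest case is this compatibility of the balancing $\beta$ and the braiding $\sigma$ in moves of type (3) and (4): showing that interior strands passing "behind" a line defect produce exactly the braidings of the relative tensor product. I would prove it first for $M=2$ using the pentagon axiom for $\beta$, then induct on $M$.

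Finally, the framing conditions in generic position ensure that no twists $\theta$ are introduced spuriously. Any change of framing is an isotopy of ribbons, already covered by the ribbon-category invariance used in moves of type (1).
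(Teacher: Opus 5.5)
Your cases (1) and (2) are the paper's proof. Invariance of each $g_i$ under isotopies supported in a strip comes from Reshetikhin--Turaev. An interior coupon sliding across $y=t_i$ is handled by naturality of $\mu^{\pm 1}$ and the interchange law, with functoriality of $F$ and naturality of $\rho$ added at $i=0$. Case (4) is not a separate move. $V_i$ and $W_i$ are fixed by incidence with $v_i$, and interior strands cannot cross $x=0$. So any change in how strands meet the slice $y=t_i$ is a cup, cap or crossing passing through that slice, which is an instance of (2).

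What needs correcting is move (3), which you call the main obstacle and leave as a plan. \emph{It never occurs.} Definition~\ref{d-generic pos} requires every boundary vertex on $\cM_k$ to precede every boundary vertex on $\cM_l$ in $y$ whenever $k<l$, on each side of $P$. So no swap of vertices on different line defects is ``compatible with the generic-position ordering condition''.

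The paper's final step is exactly this. Along an isotopy between generic positions, boundary vertices change their $y$-coordinates but cannot change their order, so $\Ev_{\mathbf{C}}(\Gamma)$ is visibly unchanged. The remark after the lemma says that your move (3), with the balancing/braiding bookkeeping you describe, would only be needed if the ordering condition were relaxed.

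For the same reason, ``perturb the isotopy to be generic except at finitely many times'' is not the right mechanism here. The ordering condition fails on an open set, not a codimension-one one. Instead, choose the connecting isotopy to preserve the order. This is possible because boundary vertices can be slid independently along their own line defects. Those defects are monotone in $y$ inside the cube, since generic position forbids tangents colinear with $z$. The ordered vertex configurations then form a convex, hence connected, set.

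With (3) dropped, your argument is complete and coincides with the paper's.
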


    \begin{proof}
      First choose any isotopy representative of $\Gamma \cap \mathbf{C}$ which is in generic position. For boundary-vertex $y$-coordinates $t_i$ as above, consider isotopies of $\Gamma$ which are fixed outside of the region $ [0, 1] \times (t_i, t_{i+1}) \times [-1/2, 1/2]$.
      The morphism $g_i \in \Hom_{\cC}(W_i \ot U_i, V_{i+1} \ot U_{i+1})$ is well-defined with respect to these isotopies, which follows from well-definedness of the Reshetikhin--Turaev evaluation functor \cite[\S 5]{RT90RibbonGraphsTheir}. 

      Now consider an isotopy of $\Gamma$ which moves a single interior coupon $h$ in the interval $[t_{i-1}, t_{i}]$ into the interval $[t_{i}, t_{i+1}]$ in the projection $\pi(\Gamma)$. By the previous step, we can assume that the morphism in the strip $[t_{i-1}, t_{i}]$ was of the form shown in Figure \ref{f-coupon-slide}. Assume that $i \neq 0$. Then we note that the segment $[t_{i-1}, t_{i+1}]$ of $\pi(\Gamma)$ describes the morphism
      \[
        (\id_{X_{i}} \lhd g_{i}) \circ \mu^{-1}_{X_i, W_i, U_i} \circ (f_{i} \lhd \id_{U_{i}}) \circ \mu_{X_{i-1}, V_i, U_i} \circ (\id_{X_{i-1}} \lhd h') \circ (\id_{X_{i-1}} \lhd g''_{i-1}).
      \]
      We note that $h' = \id_{V_i} \ot h''$ for $h''$ the morphism indicated in the dashed square in Figure \ref{f-coupon-slide}. By naturality of $\mu$, we have that the above expression is equal to
      \[
        (\id_{X_{i}} \lhd g_{i}) \circ \mu^{-1}_{X_i, W_i, U_i} \circ (f_{i} \lhd \id_{U_{i}}) \circ (\id_{(X_{i-1} \lhd V_i)} \lhd h'') \circ \mu_{X_{i-1}, V_i, V_i^{\vee} \ot U'} \circ (\id_{X_{i-1}} \lhd g''_{i-1}).
      \]
      Using the interchange law
      \[
        (\id \lhd g) \circ (f \lhd \id) = (f \lhd \id) \circ (\id \lhd g)
      \]
      for module categories, which is a result of bifunctoriality of the action, this is equal to the morphism
      \[
        (\id_{X_{i}} \lhd g_{i}) \circ \mu^{-1}_{X_i, W_i, U_i} \circ (\id_{(X_{i} \lhd W_i)} \lhd h'') \circ (f_{i} \lhd \id_{(V_{i}^{\vee} \ot U')}) \circ \mu_{X_{i-1}, V_i, V_i^{\vee} \ot U'} \circ (\id_{X_{i-1}} \lhd g''_{i-1}).
      \]
      By naturality of $\mu^{-1}$ this is equal to  
      \[
        (\id_{X_{i}} \lhd g_{i}) \circ (\id_{X_{i}} \lhd (\id_{W_i} \ot h'')) \circ \mu^{-1}_{X_i, W_i, V_i^{\vee} \ot U'} \circ (f_{i} \lhd \id_{(V_{i}^{\vee} \ot U')}) \circ \mu_{X_{i-1}, V_i, V_i^{\vee} \ot U'} \circ (\id_{X_{i-1}} \lhd g''_{i-1}).
      \]
      This is precisely the morphism described by $\pi(\Gamma) \cap [t_{i-1}, t_{i+1}]$ after the isotopy which drags the coupon $h$ across the line $y = t_i$ from lower to higher $y$-value. It follows that $\Ev_{\mathbf{C}}(\Gamma)$ is unchanged by the isotopy. Clearly, an analogous argument shows that $\Ev_{\mathbf{C}}(\Gamma)$ is unchanged by the isotopy which drags a coupon across $y = t_i$ in the opposite direction.

      Now consider the case where $i = 0$ in the above isotopy. Then $\Gamma \cap [t_{-1}, t_1]$ evaluates to the morphism     
      \begin{equation*}
        (\id_{X_{0}} \lhd g_{0}) \circ \mu^{-1}_{X_0, W_0, U_0} \circ (f_{0} \lhd \id_{U_{0}}) \circ \rho^{-1}_{X_{-1} \lhd V_0, U_0} \circ F(\mu_{X_{-1}, V_0, U_0} \circ (\id_{X_{-1}} \lhd h') \circ (\id_{X_{-1}} \lhd g''_{-1})).
      \end{equation*}
      Manipulations identical to the above $i \neq 0$ replace this with the morphism
      \begin{align*}
        &(\id_{X_{0}} \lhd g_{0}) \circ \mu^{-1}_{X_0, W_0, U_0} \circ (f_{0} \lhd \id_{U_{0}}) \circ \rho^{-1}_{X_{-1} \lhd V_0, U_0} \circ F((\id_{X_{-1} \lhd V_0} \lhd h'') \circ \mu_{X_{-1}, V_0, V_0^{\vee} \ot U'} \circ (\id_{X_{-1}} \lhd g''_{-1}))\\
        &=(\id_{X_{0}} \lhd g_{0}) \circ \mu^{-1}_{X_0, W_0, U_0} \circ (f_{0} \lhd \id_{U_{0}}) \circ \rho^{-1}_{X_{-1} \lhd V_0, U_0} \circ F(\id_{X_{-1} \lhd V_0} \lhd h'')\\
         &\hspace{3em}\circ F(\mu_{X_{-1}, V_0, V_0^{\vee} \ot U'} \circ (\id_{X_{-1}} \lhd g''_{-1}))\\
         &= (\id_{X_{0}} \lhd g_{0}) \circ \mu^{-1}_{X_0, W_0, U_0} \circ (f_{0} \lhd \id_{U_{0}}) \circ \rho^{-1}_{X_{-1} \lhd V_0, U_0} \circ F(\id_{X_{-1} \lhd V_0} \lhd h'')\\
        &\hspace{3em}\circ \rho_{X_{-1} \lhd V_0, V_0^{\vee} \ot U'} \circ \rho^{-1}_{X_{-1} \lhd V_0, V_0^{\vee} \ot U'} \circ F(\mu_{X_{-1}, V_0, V_0^{\vee} \ot U'} \circ (\id_{X_{-1}} \lhd g''_{-1}))
      \end{align*}
      where we have used functoriality of $F$ in the first equality, and inserted $\id_{F((X_{-1} \lhd V_0) \lhd (V_0^{\vee} \ot U'))} = \rho_{X_{-1} \lhd V_0, V_0^{\vee} \ot U'} \circ \rho^{-1}_{X_{-1} \lhd V_0, V_0^{\vee} \ot U'}$ in the second. By naturality of $\rho$ this is equal to
      \begin{align*}
        &(\id_{X_{0}} \lhd g_{0}) \circ \mu^{-1}_{X_0, W_0, U_0} \circ (f_{0} \lhd \id_{U_{0}}) \circ \rho^{-1}_{X_{-1} \lhd V_0, U_0} \circ \rho_{X_{-1} \lhd V_0, U_0} \circ (\id_{F(X_{-1} \lhd V_0)} \lhd h'')\\
         &\hspace{3em}\circ \rho^{-1}_{X_{-1} \lhd V_0, V_0^{\vee} \ot U'} \circ F(\mu_{X_{-1}, V_0, V_0^{\vee} \ot U'} \circ (\id_{X_{-1}} \lhd g''_{-1}))\\
        &=(\id_{X_{0}} \lhd g_{0}) \circ \mu^{-1}_{X_0, W_0, U_0} \circ (f_{0} \lhd \id_{U_{0}}) \circ (\id_{F(X_{-1} \lhd V_0)} \lhd h'')\\
        &\hspace{3em}\circ \rho^{-1}_{X_{-1} \lhd V_0, V_0^{\vee} \ot U'} \circ F(\mu_{X_{-1}, V_0, V_0^{\vee} \ot U'} \circ (\id_{X_{-1}} \lhd g''_{-1}))
      \end{align*}
      on eliminating the pair $\rho^{-1}_{X_{-1} \lhd V_0, U_0} \circ \rho_{X_{-1} \lhd V_0, U_0} = \id_{F(X_{-1} \lhd V_0) \lhd U_0}$. A further use of the interchange law and naturality of $\mu^{-1}$ as in the $i \neq 0$ case, the above can be rewritten as 
      \begin{align*}
        &(\id_{X_{0}} \lhd g_{0}) \circ (\id_{X_0} \lhd ( \id_{W_0} \ot h'')) \circ \mu^{-1}_{X_0, W_0, V_0^{\vee} \ot U'} \circ (f_{0} \lhd \id_{V_0^{\vee} \ot U'})\\
         &\hspace{8em}\circ \rho^{-1}_{X_{-1} \lhd V_0, V_0^{\vee} \ot U'} \circ F(\mu_{X_{-1}, V_0, V_0^{\vee} \ot U'} \circ (\id_{X_{-1}} \lhd g''_{-1})).
      \end{align*}
      The resulting morphism is the result of evaluating $\pi(\Gamma)$ after the isotopy of dragging the $h$-coupon across the $y = t_0$ line from negative to positive $y$-value, so that $\Ev_{\mathbf{C}}(\Gamma)$ is unchanged by this isotopy. Clearly an analogous argument shows that $\Ev_{\mathbf{C}}(\Gamma)$ is unchanged by dragging coupons from positive to negative $y$-value.

      Finally, consider an isotopy which changes the coordinates of the coupons $C_i$. Note that, for $\Gamma$ to remain in generic position after this isotopy, the order of the coupons cannot be changed by the isotopy. Therefore it is clear that this isotopy does not change  $\Ev_{\mathbf{C}}(\Gamma)$.

      Any isotopy of $\Gamma$ into another generic position is a composition of the above isotopies, so it follows that $\Ev_{\mathbf{C}}(\Gamma)$ is well-defined with respect to the choice of isotopy of $\Gamma$ into generic position.  
    \end{proof} 

    \begin{rmk}
      It is possible to relax the condition on the ordering of boundary vertices in Definition \ref{d-generic pos}. This results in an additional isotopy which needs to be checked in Lemma \ref{l-ev well-defd}: namely, the isotopy which moves two vertices in different incoming or outgoing line defects past one another. Checking this isotopy is tedious and unenlightening, and does not change the skein theory, since the canonical order we insist upon always exists and any ribbon graph can be isotoped into this position.
    \end{rmk}
    
    Now we will consider stratified graphs up to isotopy, and linear combinations $\sum_i  \lambda_i [\Gamma_i]$ of isotopy classes of defect-coloured stratified graphs.

    \begin{defn}
      \label{d-ord-skein-rels}
      We define an equivalence relation on linear combinations of isotopy classes of defect-coloured stratified graphs as follows.
      Suppose there exist sets of graphs $\{ \Gamma_i \}, \{ \Gamma'_j \}$ and a cube $\mathbf{C} \subseteq M^3$ which is valid and for which each $\Gamma_i$ and $\Gamma'_j$ have the same good intersection with $\mathbf{C}$ with the same colours, and $\Gamma_i \backslash \mathbf{C} = \Gamma'_i \backslash \mathbf{C}$. Then if
      \[
        \sum_i  \lambda_i \Ev_{\mathbf{C}}(\Gamma_i) = \sum_j  \mu_j \Ev_{\mathbf{C}}(\Gamma'_j)
      \] 
      we declare that
      \[
        \sum_i  \lambda_i \Gamma_i \sim \sum_j  \mu_j \Gamma'_j.
      \]
      We call these relations as $\mathbf{C}$ ranges over all valid cubes in $M$ the \emph{skein relations}.
    \end{defn}

    \begin{defn}
      \label{d-defect-skein-mod}
      Let $\mathbb{M}$ be a boundary-stratified $\mathfrak{D}^{\cC}$-marked 3-manifold. 
      Then we define the defect skein module $\Sk(\mathbb{M})$ to be the vector space spanned by coloured ribbon graphs in $\mathbb{M}$, modulo the skein relations.
    \end{defn}

    Let us remark on some specific instances of the skein relations,  
illustrated in Figure \ref{f-skein-rels}:
    \begin{itemize}
      \item When $\mathbf{C}$ does not meet the boundary of $M$, the skein relations implement relations in the $\Hom$-spaces of $\cC$ (Figure \ref{f-sk-rels-1}). This is the familiar setting of non-defect skein theory, where skein relations implement the graphical calculus of a ribbon category. We call these relations the \emph{interior skein relations} or \emph{$\cC$-skein relations}. Therefore, in the case $\mathbb{M}$ has no line defects, Definition \ref{d-defect-skein-mod} recovers the usual definition of skein modules with marked arcs in the boundary.
      \item When $\mathbf{C}$ does meet the boundary but does not meet a nontrivial point defect, then $\mathbf{C}$ meets a line defect labelled by a module category $\cM$. Then the skein relations for $\mathbf{C}$ implement the relations in the $\Hom$-spaces of $\cM$, or implement the graphical calculus for $\cM$ as a $\cC$-module category (Figure \ref{f-sk-rels-2}). We call these \emph{skein relations along a line defect} or \emph{$\cM$-skein relations}.
      \item  At a point defect, we can by a skein relation absorb an interior coupon into the point defect itself (Figure \ref{f-sk-rels-3}). Where the point defect is an identity defect, we can bring interior morphisms into line defects.
      
      There is moreover a relation which combines vertices along a line defect which have a parallel interior ribbon joining them. One of these vertices may be a nontrivial point defect (Figure \ref{f-sk-rels-4}).

      Iterating these relations, including in the case where the vertices support identity coupons, any interior component of a graph can be attached to a section of defect ribbon to which it is parallel, and this can even take place across point defects. We call this procedure \emph{zipping} and the relations used to perform it the \emph{zipping relations}.
      \item Consider a point defect with one incoming line defect marked $\cM$ and no outgoing line defect, and a ribbon graph $\Gamma$ having a coupon labelled $f_{-1} : X_{-2} \lhd V_{-1} \to  X_{-1} \lhd W_{-1}$ immediately before the point defect, and the  label $u_{F(X_{-1})} : F(X_{-1}) \to \One_{\cC} \ot F(X_{-1})$ at the point defect, for $u$ the inverse to the left unitor of $\cC$. Note that this matches our conventions for point defects with no outgoing line defects. In a valid cube near these vertices, the graph evaluates to
      \begin{align*}
        &u_{F(X_{-1})} \circ \rho^{-1}_{X_{-1}, W_{-1}} \circ F(f_{-1}) \circ \rho_{X_{-2}, V_{-1}}\\
          &\in \Hom_{\cC}(F(X_{-2}) \ot V_{-1}, \One_{\cC} \ot F(X_{-1}) \ot W_{-1}). 
      \end{align*}
      Now consider the graph $\Gamma'$ where the point defect is labelled by $u_{F(X_{-2})}$, and there is an interior vertex just after the point defect labelled by $ \rho^{-1}_{X_{-1}, W_{-1}} \circ F(f_{-1}) \circ \rho_{X_{-2}, V_{-1}}$. This graph evaluates to
      \begin{align*}
        &\rho^{-1}_{X_{-1}, W_{-1}} \circ F(f_{-1}) \circ \rho_{X_{-2}, V_{-1}}  \circ (u_{F(X_{-2})} \ot \id_{V_{-1}})\\ 
        &\in \Hom_{\cC}(F(X_{-2}) \ot V_{-1}, \One_{\cC} \ot F(X_{-1}) \ot W_{-1}). 
      \end{align*}
      Recalling that $u_{X \ot Y} = u_X \ot \id_Y$ \cite[Prop. 2.2.4]{EGNO15TensorCategories} and $u$ is a natural transformation, the above two morphisms are identical, so $\Gamma \sim \Gamma'$. We call these skein relations the \emph{subduction relations} (Figure \ref{f-sk-rels-5}).
    \end{itemize}

    The subduction relations are a key part of our main proof, and we therefore need a lemma showing that the label at any point defect with no outgoing line defects can always be replaced with a unitor.

    \begin{lemma}
      \label{l-unitors-everywhere}
      Let $P$ a point defect with no outgoing defect, marked by $F : \cM \to \cC$, and $\Gamma$ a ribbon graph which is labelled by $f : F(X \lhd V) \to \One \ot W$ at the point defect. Then $\Gamma$ is related, under the skein relations, to a graph $\Gamma'$ having $u_{F(X)}$ at the point defect. 
    \end{lemma}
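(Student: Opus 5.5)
The plan is to use a single skein relation in a valid cube $\mathbf{C}$ centred at $P$. In that cube we replace the point-defect label $f$ by the unitor, and we compensate by inserting an interior coupon just after $P$ that carries all the information of $f$. This mirrors the subduction relation discussed above. The key observation is that, because $F$ has target $\cC$ and the point defect has no outgoing line defects, the morphism $f$ is a morphism in $\cC$. It can therefore be transported off the boundary and into the interior.

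Concretely, we proceed in three steps.

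First, choose a valid cube $\mathbf{C}$ around $P$ that meets only the incoming line defect(s) (combined into $\cM$), the vertex at $P$, and the interior ribbons incident to $P$. Isotope $\Gamma$ into generic position so that there are no boundary vertices other than $P$ in $\mathbf{C}$. Then $X_{m-1}=X$, $U_{m-1}$ is the interior input, and $\Ev_{\mathbf{C}}(\Gamma)$ is
\[
g_{\geq 0}\circ F(g_{<0})\circ\rho_{X,U_{m-1}},
\]
where $f$ enters $g_{\geq 0}$ through $\tilde f_0 = \mu^{-1}\circ(f\lhd\id)\circ\rho^{-1}_{X\lhd V,U_0}\circ F(\mu)$. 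Here $\mu$ is simply the associator of $\cC$ on the target side.

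Second, define $\Gamma'$ to agree with $\Gamma$ outside $\mathbf{C}$. Inside $\mathbf{C}$, the point defect of $\Gamma'$ is labelled by $u_{F(X)}$, and the interior strands entering $P$ (labelled $V$) are rerouted to bypass $P$. They meet a new interior coupon placed immediately after $P$ (at small positive $y$), which receives the ribbon $F(X)$ emanating from $P$ together with $V$. This coupon is labelled by
\[
h := \lambda_W\circ f\circ\rho^{-1}_{X,V} : F(X)\ot V\to W,
\]
with unitors inserted as needed so that the types match the conventions of Definition~\ref{d-def-col-graph}. The interior edge leaving $P$ carries $F(X)$, since $\One\ot F(X)$ is the target of $u_{F(X)}$.

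Third, compute $\Ev_{\mathbf{C}}(\Gamma')$ and show that it equals $\Ev_{\mathbf{C}}(\Gamma)$; the skein relation then gives $\Gamma\sim\Gamma'$. This comparison uses naturality of $u$, the identity $u_{X\ot Y}=u_X\ot\id_Y$ as in the subduction discussion, naturality and the module-functor coherence of $\rho$ (to pass $F(\mu)$ through $\rho$), and the triangle/pentagon coherences in $\cC$ to cancel $\rho^{-1}_{X,V}$ against the $\rho$ appearing in $\tilde f_0$.

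The main obstacle is this last bookkeeping. The interior ribbons $V$ approach $P$ in the $x_vy_v$-plane, and a leftover $U_0$ sits to the right of them, so the rerouted strands must be arranged so that $g_0$ only differs by identities and associators. I would first apply Lemma~\ref{l-ev well-defd} and isotopies to place every other interior coupon outside the strip near $y=0$, so that $U_0$ is merely carried along. The verification then reduces to the single identity
\[
\rho^{-1}_{X\lhd V,U}\circ F(\mu_{X,V,U}) = (\rho^{-1}_{X,V}\ot\id_U)\circ\alpha\circ\rho^{-1}_{X,V\ot U},
\]
which is exactly the module-functor axiom for $(F,\rho)$.
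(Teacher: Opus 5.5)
Your proposal is correct and follows the paper's proof. The paper likewise relabels $P$ by $u_{F(X)}$, inserts an interior coupon labelled $u_W^{-1}\circ f\circ\rho_{X,V}$ just after $P$, and checks that both graphs evaluate to $f\circ\rho_{X,V}$ in a valid cube, using naturality of the unitor and $u_{X\ot Y}=u_X\ot\id_Y$; it suppresses the module-functor coherence bookkeeping that you spell out.

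One notational slip to fix. With the paper's convention $\rho_{X,V}\colon F(X)\lhd V\to F(X\lhd V)$, your coupon label should use $\rho_{X,V}$ rather than $\rho^{-1}_{X,V}$, since otherwise it does not compose with $f$. For the same reason, the leftmost factor in your coherence identity should be $\rho_{X,V}\ot\id_U$.
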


    \begin{proof}
      Let $\Gamma'$ be the graph having $u_{F(X)}$ at the point defect and a coupon in the interior afterwards with the label
      \[
        u^{-1}_{W} \circ f \circ \rho_{X, V}
      \]
      as in 
the right-hand side of      
      Figure \ref{f-sk-rels-6}. This graph evaluates to the morphism
      \begin{align*}
        &(\id_{\One} \ot (u^{-1}_W \circ f \circ \rho_{X, V}))\circ (u_{F(X)} \ot \id_V)\\
        &=       
        (\id_{\One} \ot u^{-1}_W) \circ (\id_{\One} \ot (f \circ \rho_{X, V}))\circ (u_{F(X)} \ot \id_V)\\
        &= (u^{-1}_{\One \ot W}) \circ (\id_{\One} \ot (f \circ \rho_{X, V}))\circ (u_{F(X) \ot V})\\
        &=(f \circ \rho_{X, V}) \circ u^{-1}_{F(X) \ot V} \circ u_{F(X) \ot V}\\
        &=f \circ \rho_{X, V}.
      \end{align*}
      The first equality is standard for a monoidal category, and the second applies standard properties of the unit \cite[Propositions 2.2.2, 2.2.4]{EGNO15TensorCategories}. The third equality is naturality of the (inverse) unitor. The result is the evaluation of the graph $\Gamma$ 
(the left-hand side of Figure \ref{f-skein-rels} (f))      
      Therefore, $\Gamma \sim \Gamma'$ and $\Gamma'$ has the required form.
    \end{proof}

     \begin{figure}
      \begin{subfigure}{0.45\textwidth}
        \centering
        \includesvg[width=6cm]{skein-rels-1.svg}
        \caption{}
        \label{f-sk-rels-1}
      \end{subfigure}
      \begin{subfigure}{0.45\textwidth}
        \centering
        \includesvg[width=6cm]{skein-rels-2.svg}
        \caption{}
        \label{f-sk-rels-2}
      \end{subfigure}
      \begin{subfigure}{0.45\textwidth}
        \centering
        \includesvg[width=6cm]{skein-rels-3.svg}
        \caption{}
        \label{f-sk-rels-3}
      \end{subfigure}
      \begin{subfigure}{0.45\textwidth}
        \centering
        \includesvg[width=6cm]{skein-rels-4.svg}
        \caption{}
        \label{f-sk-rels-4}
      \end{subfigure}
      \begin{subfigure}{0.45\textwidth}
        \centering
        \includesvg[width=6cm]{skein-rels-5.svg}
        \caption{}
        \label{f-sk-rels-5}
      \end{subfigure}
      \begin{subfigure}{0.45\textwidth}
        \centering
        \includesvg[width=6cm]{skein-rels-6.svg}
        \caption{}
        \label{f-sk-rels-6}
      \end{subfigure}
      \caption{Some specific examples of skein relations, shown in the projection $\pi$. (A) $\cC$-skein relations. (B) $\cM$-skein relations. (C) zipping relation of the first kind. (D) zipping relation of the second kind, for a vertex after a point defect (similarly: the case of a vertex before a  point defect, and the case of the identity point defect). (E) subduction relation. (F) relation to replace labels with unitors, when a point defect has no outgoing line defects.}
      \label{f-skein-rels}
    \end{figure}

    \subsection{Examples}

    \begin{eg}
      Write $H_1(T^2) = \Z \alpha \oplus \Z \beta$. Let $\mathbb{T}$ be the $\mathfrak{D}^{\cC}$-marked 3-manifold which has as an underlying manifold the cylinder $T^2 \times I$, with a single line defect along the homology cycle $\alpha$ in $T^2 \times \{ 1\}$, marked by a right $\cC$-module category $\cM$, and a single point defect in this line defect marked by a functor $F : \cM^{\op} \bt_{\cC} \cM \to \cC$. See Figure \ref{f-T2-example-1}.
    \end{eg}

    We will consider the defect skein module of this example. Recall that given a functor
    \[
      G : \cM^{\op} \times \cM \times \cC^{\op} \times \cC \to \Vect
    \]
    we can form the coend  
    $\int^{m \in \cM} G(m, m, -, -) : \cC^{\op} \times \cC \to \Vect$ and take another coend $\int^{c \in \cC} \int^{m \in \cM} G(m, m, c, c)$. We could also form the coend $\int^{m \in \cM} \int^{c \in \cC} G(m, m, c, c)$ and the coend $\int^{(m, c) \in \cM \times \cC} G(m, m, c, c)$ by identifying $\cM^{\op} \times \cM \times \cC^{\op} \times \cC \simeq (\cM \times \cC)^{\op} \times \cM \times \cC$. It is a theorem \cite[Thm.\,1.3.1]{Lor21CoendCalculus} that these three constructions are canonically equivalent.

    Denote by $m \boxtimes_{\cC} n$ the image of $(m,  n)$ under the canonical bilinear $\cC$-balanced functor $\cM^{\op} \times \cM \to \cM^{\op} \bt_{\cC} \cM$. Where $F$ is the functor labelling the unique point defect in $\mathbb{T}$, denote
    \begin{align*}
      \hat{F} : \cM \times \cM^{\op} \times \cC \times \cC^{\op} &\to \Vect\\
      (m, n, c, d) &\mapsto \Hom_{\cC}(F(m \boxtimes_{\cC} n) \otimes c^{\vee} \otimes d,  \One).
    \end{align*}

    Note that the change of where the $\op$ appears in the source of $F$ vs $\hat{F}$ is due to the use of the contravariant $\Hom$ functor to construct $\hat{F}$.

    \begin{figure}
      \begin{subfigure}[t]{0.45\textwidth}
        \centering
        \includesvg[width=4cm]{T-2-skeins-1.svg}
        \subcaption{A generic defect skein in $\mathbb{T}$.}
        \label{f-T2-example-1}
      \end{subfigure}
      \hspace{1cm}
      \begin{subfigure}[t]{0.45\textwidth}
        \centering
        \includesvg[width=4cm]{T-2-skeins-2.svg}
        \subcaption{The result of moving the ribbon parallel to the defect, onto the defect.}
        \label{f-T2-example-2}
      \end{subfigure}
      \\
     \begin{subfigure}[t]{0.45\textwidth}
      \centering
        \includesvg[width=4cm]{T-2-skeins-3.svg}
        \subcaption{The result of moving the vertices from the defect into the bulk through the point defect.}
        \label{f-T2-example-3}
      \end{subfigure}
      \hspace{1cm}
      \begin{subfigure}[t]{0.45\textwidth}
        \centering
        \includesvg[width=4cm]{T-2-skeins-4.svg}
        \subcaption{The result of simplifying the bulk vertices into a single vertex. A single ribbon homologous to $\beta$ may meet this coupon.}
        \label{f-T2-example-4}
      \end{subfigure}
      \caption{The steps in the proof of Lemma \ref{l-skein-example-T}.}
      \label{f-skein-example-T}
    \end{figure}

     \begin{lemma}
      \label{l-skein-example-T}
      There is an  isomorphism
      \[
        \Sk(\mathbb{T}) \cong \int^{(m, c) \in \cM \times \cC} \hat{F}(m, m, c, c) .
      \]
    \end{lemma}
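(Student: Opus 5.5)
We construct mutually inverse linear maps between $\Sk(\mathbb{T})$ and the coend, following the four steps of Figure~\ref{f-skein-example-T}. First we reduce every defect skein to a normal form. Take an arbitrary coloured ribbon graph in $\mathbb{T}$. By isotopy and the zipping relations (Figure~\ref{f-sk-rels-3}, \ref{f-sk-rels-4}), any part of the graph running parallel to the line defect is absorbed into it. Using $\cM$-skein relations along the defect, all boundary vertices on the line defect are merged into a single boundary edge labelled by one object $m \in \cM$. Both ends of this edge meet the point defect: one segment is outgoing-oriented, giving the $\cM^{\op}$ slot, and the other gives the $\cM$ slot.

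Next we treat the point defect. Since it has no outgoing line defects, Lemma~\ref{l-unitors-everywhere} and the subduction relations (Figure~\ref{f-sk-rels-5}) let us put $u_{F(m\boxtimes_\cC m)}$ at the point defect and push the remaining data into an interior coupon immediately after it. The interior of $T^2\times I$ deformation retracts onto $T^2$, with the line defect parallel to $\alpha$. Using $\cC$-skein relations in the bulk, we isotope and fuse all interior coupons into a single coupon near the point defect. Any interior ribbon winding around the $\alpha$-direction can be zipped onto the defect, so only a single ribbon homologous to $\beta$, labelled by some $c\in\cC$, remains and meets this coupon on both sides. The result is the normal form of Figure~\ref{f-T2-example-4}. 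Its coupon is a morphism in $\Hom_\cC(F(m\boxtimes_\cC m)\ot c^\vee\ot c,\One)$, up to the sign and duality conventions, which is exactly $\hat F(m,m,c,c)$. This defines a surjection $\bigoplus_{m,c}\hat F(m,m,c,c)\twoheadrightarrow \Sk(\mathbb{T})$.

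It remains to identify the kernel with the dinaturality relations defining the coend. One inclusion is straightforward. Given $g:m\to m'$ in $\cM$ and $h:c\to c'$ in $\cC$, place $g$ on the defect ribbon (respectively $h$ on the $\beta$-ribbon) and slide it once around the cycle back into the coupon, from either side. The $\cM$- and $\cC$-skein relations then produce exactly the relation $\hat F(g,\id)(\phi)\sim\hat F(\id,g)(\phi)$, and similarly for $h$. Hence the surjection factors through the coend, giving a well-defined map $\Psi$ from the coend onto $\Sk(\mathbb{T})$.

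The main obstacle is the converse: showing that every skein relation, applied in an arbitrary valid cube to an arbitrary graph, maps to zero in the coend, so that the reduction to normal form gives a well-defined inverse. Our plan is to define the inverse map $\Phi'$ directly. Cut $\mathbb{T}$ along a disc transverse to $\alpha$ away from the point defect and along an annulus transverse to $\beta$. What remains is a cube-like region containing the point defect, whose evaluation $\Ev$ yields an element of $\hat F(m,m,c,c)$, where $m$ and $c$ are the tensor products of the labels crossing the cuts. This assignment is well defined by Lemma~\ref{l-ev well-defd}. Changing the cut positions, or isotoping graph pieces across a cut, changes the value precisely by a dinaturality relation, so $\Phi'$ is well defined on the coend. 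Because every valid cube can be isotoped so that it avoids the cuts, skein relations are respected. By construction $\Phi'\circ\Psi=\id$, and $\Psi$ is surjective, so $\Psi$ is an isomorphism. Along the way we use \cite[Thm.\,1.3.1]{Lor21CoendCalculus} to identify the double coend with the coend over $\cM\times\cC$.
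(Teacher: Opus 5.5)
Your reduction to normal form, and your check that the dinaturality relations hold in $\Sk(\mathbb{T})$, match the paper's proof. The step that fails is the well-definedness of $\Phi'$.

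The two surfaces you cut along, $\{a_0\}\times S^1_\beta\times I$ and $S^1_\alpha\times\{b_0\}\times I$, meet in an arc $\gamma=\{(a_0,b_0)\}\times I$ joining $T^2\times\{0\}$ to $T^2\times\{1\}$. This arc is the cocore of the only 2-handle in the decomposition of $T^2\times I$ as the genus-$2$ handlebody $\mathbb{T}\setminus N(\gamma)$ plus a 2-handle attached along the commutator of $\alpha$ and $\beta$. Generic isotopies also push strands across $\gamma$. Such a move changes the crossing data on both cuts at once, and it does not change your evaluation by a dinaturality relation.

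For example, drag the $c$-loop once around the $\alpha$-direction underneath the defect, so that the loop $\beta$ becomes $\alpha\beta\alpha^{-1}$. Then zip the two new $\alpha$-parallel $c$-ribbons into the defect. This identifies $\phi\in\hat F(m,m,c,c)$ with an element of $\hat F(m',m',c,c)$, where $m'\cong m\lhd(c\ot c^\vee)$ up to ordering, and the new element is built from $\phi$, $\rho$ and (co)evaluations of $c$. That is a slide over the 2-handle in the sense of Lemma~\ref{l-handle-slide}. Relations of this kind are in general not consequences of the coend relations, because the double coend only accounts for isotopies in the handlebody $\mathbb{T}\setminus N(\gamma)$.

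This cannot be repaired by a more careful construction of $\Phi'$. Take $\cC$ to be the Fibonacci category (simples $\One,\tau$ with $\tau\ot\tau\cong\One\oplus\tau$), $\cM=\cC$, and the trivial point defect $F(m\bt_\cC n)\cong m^\vee\ot n$, i.e.\ $A=X=\One$.
\begin{itemize}
\item The coend is $\int^{(m,c)}\hat F(m,m,c,c)\cong\bigoplus_{i,j}\Hom_\cC(X_i^\vee\ot X_i\ot X_j^\vee\ot X_j,\One)$, of dimension $1+1+1+2=5$, which equals $\dim\Hom_\cC(\One,\cL\ot\cL)$.
\item The skein module $\Sk(\mathbb{T})\cong Z^{\RT}(T^2)\ot Z^{\RT}(\overline{T^2})$ has dimension $4$. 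This follows from Theorem~\ref{t-main-thm-in-text}, or directly, since this defect is transparent and $\Sk(\mathbb{T})$ is then the ordinary skein module of $T^2\times I$.
\end{itemize}
So the surjection from the coend onto $\Sk(\mathbb{T})$ has nonzero kernel, and no left inverse $\Phi'$ can exist.

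The paper's own argument has the same omission. It asserts that the only ambiguities are moving line-defect coupons through the point defect and moving bulk coupons around $\beta$, and it overlooks moves across $\gamma$. The statement needs the double coend to be further quotiented by the slide relations over this 2-handle. This is the same treatment the paper gives the additional 2-handles in Lemmas~\ref{l-solid-torus-example} and~\ref{l-skein-example-K-minus}.
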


    \begin{proof}
      We will begin by providing a surjection $\int^{(m, c) \in \cM \times \cC} \hat{F}(m, m, c, c) \to \Sk(\mathbb{T})$. Given any skein in $\Sk(\mathbb{T})$, we can bring components which are parallel to $\alpha$ to the defect and apply the zipping relations. We can then combine all coupons along the line defect into a single coupon using the $\cM$-skein relations, and move these into the interior using a subduction relation. The result is that the ribbon  graph on the defect is labelled by a single object $x$ and the point defect by the unitor $u_{F(x \bt_{\cC} x)}$. We can bring all coupons in the interior towards the point defect, and by interior skein relations replace the graph in the neighbourhood of the point defect by one with a single coupon. Therefore, we can replace any ribbon graph with one which has an object $x \in \cM$ labelling the defect ribbon,  an object $y \in \cC$ labelling a ribbon in the interior homologous to $\beta$, and a single coupon $f : F(x \bt_{\cC} x) \ot y^{\vee} \ot y \to \One$ in the interior near the point defect. This gives a surjection
      \[
        \varphi : \bigoplus_{(m , c) \in \cM \times \cC} \hat{F}(m, m, c, c) \to \Sk(\mathbb{T}).
      \]
      The kernel of this surjection is the ambiguity that coupons from the line defect can be moved through the point  defect in two directions (with or against the orientation of $\alpha$), and coupons in the bulk may be isotoped around the $\beta$-cycle towards the point defect with or against the orientation of $\beta$.  The skein relations can always be performed after these isotopies, but since the source of $\varphi$ is a direct sum of $\Hom$-spaces, and the skein relations merely implement the relations in these $\Hom$-spaces, there are no further relations in $\ker \varphi$ after these isotopy ambiguities. So we see that the kernel of $\varphi$ is given by the double coend relations for  $\cM$ and $\cC$.
    \end{proof}

    We will now analyze the defect skein modules of the following examples, which can be obtained from $\mathbb{T}$ by handle additions.

    \begin{eg}
      \label{eg-cakes-via-handles}
      Let $\mathbb{K}_{\pm}$ be the $\mathfrak{D}^{\cC}$-marked solid tori of Example \ref{eg-marked-solid-tori}. Then $\mathbb{K}_{+}$ is obtained from $\mathbb{T}$ by adding a 2-handle along a curve in $T^2 \times \{0\}$ parallel to $\beta$, and then adding a 3-handle; $\mathbb{K}_{-}$ is obtained by adding a 2-handle along a curve in $T^2 \times \{0\}$ parallel to $\alpha$ and then adding a 3-handle.
    \end{eg}

    We will use the following fact to relate the skein modules of $\mathbb{T}$ and $\mathbb{K}_{\pm}$.

    \begin{lemma}
      \label{l-handle-slide}
      Let $ i : \mathbb{M} \hookrightarrow \mathbb{N}$ an embedding of $\mathfrak{D}^{\cC}$-marked 3-manifolds. Then $i$ induces a map $i_{*} : \Sk(\mathbb{M}) \to \Sk(\mathbb{N})$, and:
      \begin{itemize}
        \item If $\mathbb{N}$ is obtained from $\mathbb{M}$ by adding a 3-handle, then $i_{*}$ is an isomorphism.
        \item If $\mathbb{N}$ is obtained from $\mathbb{M}$ by adding a 2-handle, then $i_{*}$ is an epimorphism, and the kernel of $i_{*}$ is generated by the relation of sliding ribbon graphs in $\mathbb{M}$ through the 2-handle.
      \end{itemize}
    \end{lemma}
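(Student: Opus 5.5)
The plan follows the standard argument for ordinary skein modules, adapted to the marked setting. The essential observation is that handles are attached away from the strata. In Example~\ref{eg-cakes-via-handles}, and in general, the attaching regions lie in the unstratified part of $\partial \mathbb{M}$, so every line and point defect of $\mathbb{N}$ is already a defect of $\mathbb{M}$.

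First, we define $i_*$ on generators by $[\Gamma] \mapsto [i(\Gamma)]$. Since $i$ is an embedding of stratified manifolds preserving markings and framings, $i(\Gamma)$ is again a coloured ribbon graph. A valid cube $\mathbf{C}$ in $\mathbb{M}$ is carried to the valid cube $i\circ\mathbf{C}$ in $\mathbb{N}$, and $\Ev_{i\circ \mathbf{C}}(i(\Gamma)) = \Ev_{\mathbf{C}}(\Gamma)$ because the evaluation only depends on the data inside the cube (Lemma~\ref{l-ev well-defd}). Isotopies in $\mathbb{M}$ push forward to isotopies in $\mathbb{N}$. Hence skein relations map to skein relations, and $i_*$ is a well-defined linear map.

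For a $k$-handle $D^k\times D^{3-k}$, the cocore $\{0\}\times D^{3-k}$ has dimension $3-k$. Any coloured ribbon graph $\Gamma$ in $\mathbb{N}$ is one-dimensional and meets $\partial \mathbb{N}$ only along strata, which avoid the handle. So by general position, for $k=2,3$, $\Gamma$ can be isotoped off the cocore, which is an arc or a point. After that, the radial deformation retraction of the handle minus its cocore onto the attaching region pushes $\Gamma$ into $i(\mathbb{M})$. This gives surjectivity in both cases.

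For injectivity, take a relation in $\mathbb{N}$ between images. It is a finite chain of isotopies and skein relations, each supported in a valid cube. For a $3$-handle, isotopies of graphs and of cubes can be made to avoid the cocore point, since generic $2$-parameter families of $1$-dimensional objects miss a point in dimension $3$. Valid cubes can be shrunk away from that point, because the relation depends only on the graph inside the cube, which may be isotoped to avoid it. The whole chain is then pushed into $\mathbb{M}$, so $i_*$ is an isomorphism.

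For a $2$-handle, a generic isotopy of a graph crosses the cocore arc transversally at finitely many moments. Each crossing is exactly a slide of a ribbon across the $2$-handle. Every skein relation can again be made disjoint from the cocore by shrinking the cube. Hence the kernel is spanned by differences $[\Gamma]-[\Gamma']$ with $\Gamma'$ obtained from $\Gamma$ by a handle slide.

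The main obstacle is making this general-position argument rigorous with the framing and vertex conditions of Definition~\ref{d-strat-rib-graph}. I would handle it by noting that all constraints are local near $\partial M$ at the strata, whereas the handle is attached far from the strata, so the standard argument (e.g.\ as in \cite{BHMV95TopologicalQuantumField}) applies unchanged there.
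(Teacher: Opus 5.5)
Your proposal is correct and takes essentially the paper's route: the paper only cites the standard handle-attachment result (Przytycki, Prop.~2.1 and Lem.~4.1) and asserts that it carries over unchanged with boundary defects. That is exactly the general-position argument you spell out, and you also make explicit the key point that the attaching regions avoid the strata. One small wording fix in the $3$-handle step: an isotopy of a graph sweeps out a $2$-dimensional set (the image of $\Gamma \times I$), which generically misses a point in a $3$-manifold; a genuinely $2$-parameter family of $1$-dimensional objects would sweep a $3$-dimensional set and need not miss it.
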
  

    \begin{proof}
      This is a standard result in skein theory, see \cite[Prop.\,2.1, Lem.\,4.1]{PrzKauffmanBracketSkein1999}, and the proof carries over identically in the presence of boundary defects.
    \end{proof}

    We are now ready to understand $\Sk(\mathbb{K}_{+})$. Denote
      \[
        \tilde{F} = \Hom_{\cC}(F(- \boxtimes_{\cC} -), \One) : \cM \times \cM^{\op} \to \Vect.
      \]

    \begin{figure}
      \centering
      \includesvg[width=12cm]{beta-handle.svg}
      \caption{The handle-slide relations for a 2-handle attached along $\beta$ (the dashed blue curve). Left: a skein in the generic position described by Lemma \ref{l-skein-example-T}. Middle: the edge homotopic to $\beta$ has been slid through the handle, and is now in a neighbourhood of the coupon. Right: an interior skein relation replaces this with a new vertex, now with no edge homotopic to $\beta$.}
      \label{f-beta-handle}
    \end{figure}

    \begin{lemma}
      \label{l-solid-torus-example}
      There is an isomorphism
      \[
        \Sk(\mathbb{K}_{+}) \cong \int^{m \in \cM} \tilde{F}(m, m).
      \]
    \end{lemma}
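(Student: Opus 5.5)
We obtain $\mathbb{K}_{+}$ from $\mathbb{T}$ by attaching a 2-handle along a curve parallel to $\beta$ in $T^2 \times \{0\}$ and then a 3-handle (Example~\ref{eg-cakes-via-handles}). The plan is to compute $\Sk(\mathbb{K}_{+})$ from the presentation of $\Sk(\mathbb{T})$ in Lemma~\ref{l-skein-example-T}, using Lemma~\ref{l-handle-slide}.

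By Lemma~\ref{l-handle-slide}, adding the 3-handle does not change the skein module. Adding the 2-handle gives a surjection $i_* : \Sk(\mathbb{T}) \to \Sk(\mathbb{K}_+)$, whose kernel is generated by handle-slides through the $\beta$-handle. By Lemma~\ref{l-skein-example-T}, every element of $\Sk(\mathbb{T})$ is represented by a graph in the normal form of Figure~\ref{f-T2-example-4}:
\begin{itemize}
\item an object $m\in\cM$ labels the defect ribbon;
\item an object $c\in\cC$ labels an interior ribbon homologous to $\beta$;
\item a single coupon $f\in\hat F(m,m,c,c)$ sits near the point defect.
\end{itemize}

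First I would construct a map in each direction. Sliding the $\beta$-ribbon through the handle (Figure~\ref{f-beta-handle}) moves it into a small ball next to the coupon. There the interior skein relations let us evaluate it, so the coupon becomes a morphism $F(m\boxtimes_\cC m)\to\One$.

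This defines a map on representatives,
\[
\bigoplus_{(m,c)}\hat F(m,m,c,c)\to \bigoplus_m \tilde F(m,m),
\]
which is roughly precomposition with the coevaluation $\One \to c^\vee\otimes c$ (up to the ordering and duality conventions). I then need two facts:
\begin{itemize}
\item this map respects the double coend relations, so it descends to the coends;
\item after descending, it agrees with $i_*$ followed by the obvious surjection $\bigoplus_m\tilde F(m,m)\to\Sk(\mathbb{K}_+)$.
\end{itemize}
For the second fact, note that graphs with $c=\One$ in $\mathbb{T}$ map onto $\Sk(\mathbb{K}_+)$. This gives surjectivity of $\int^m\tilde F(m,m)\to\Sk(\mathbb{K}_+)$, with the $\cM$-coend relations coming from moving coupons through the point defect along $\alpha$, exactly as in Lemma~\ref{l-skein-example-T}.

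Next I would check that the $\cC$-coend relations become trivial in the target. In $\int^c \hat F(\cdot,\cdot,c,c)$, the class of a coupon with a $c$-ribbon around $\beta$ is identified, via this map, with the closed-off morphism. The point is that the relation "isotope a bulk coupon around $\beta$" becomes, after the handle-slide, an instance of the interior skein relation inside a ball. So there is a well-defined map
\[
\int^{(m,c)}\hat F(m,m,c,c)\to\int^m\tilde F(m,m).
\]
It is split by the inclusion at $c=\One$, and its kernel is exactly the subspace generated by handle-slide relations. Combined with the description of $\ker i_*$ in Lemma~\ref{l-handle-slide}, this gives $\Sk(\mathbb{K}_+)\cong\int^m\tilde F(m,m)$.

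The main obstacle is showing that handle-slides generate \emph{no further} relations, i.e.\ that $\ker i_*$ is precisely the kernel of this "close off $c$" map. Handle-slides can be applied to any ribbon, not just one in normal form. For a general slide I would first isotope the graph into normal form while tracking the slid strand. That strand becomes an extra $\beta$-parallel ribbon, which can be fused with the $c$-ribbon by interior skein relations. This reduces every handle-slide relation to the one in Figure~\ref{f-beta-handle}, which is exactly the relation $f\sim f\circ(\id\otimes \mathrm{coev}_c)$ imposed by the map.
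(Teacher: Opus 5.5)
Your proposal is correct and takes the same route as the paper. Both arguments use Lemma~\ref{l-handle-slide}, start from the normal form of Lemma~\ref{l-skein-example-T}, slide the $c$-ribbon through the $\beta$-handle and absorb it into the coupon, which collapses the $\cC$-part of the double coend to $\int^{m}\tilde{F}(m,m)$. Your additions make precise a step the paper only asserts, namely that handle-slides impose nothing beyond identifying the $\cC$-factors: the explicit splitting of the closing-off map by the $c=\One$ inclusion, and the reduction of an arbitrary handle-slide to the one in Figure~\ref{f-beta-handle}.
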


    \begin{proof}
      By Lemma \ref{l-handle-slide}, we only need to understand the handle-slide relations for the 2-handle which is added in the construction of $\mathbb{K}_{+}$ from $\mathbb{T}$. Taking a representative graph in the form given in the proof of Lemma \ref{l-skein-example-T}, the handle-slide relations allow us to slide the $c$-labelled ribbon through the 2-handle and into the neighbourhood of the unique coupon near the point defect. The $c$-ribbon can then be absorbed into the coupon itself by an interior skein relation. See Figure \ref{f-beta-handle}. The handle-slide relations therefore identify the different $\cC$-factors in the double coend of Lemma \ref{l-skein-example-T}, so that after handle-sliding we can replace this expression with a single coend over $m \in \cM$. 
    \end{proof}

    \begin{figure}
      \centering
      \includesvg[width=10cm]{encircling-move.svg}
      \caption{Left: the morphism $f_d \in \hat{F}(m, m, c, c)$ corresponding to $f \in \hat{F}(m, m, c \ot d, c \ot d)$. Right: the morphsim $f^{\rho}_d \in \hat{F}(m \lhd d, m \lhd d, c, c)$. The $x$ and $y$ framing vectors at the $f$-vertex in the plane of the picture are indicated.}
      \label{f-encircling-move}
    \end{figure}

    Given $f \in \hat{F}(m, m, c \ot d, c \ot d) = \Hom(F(m \bt_{\cC} m) \ot d^{\vee} \ot c^{\vee} \ot c \ot d, \One)$ and $d \in \cC$, denote by $f_d  \in \hat{F}(m, m, c, c)$ the morphism on the left of Figure \ref{f-encircling-move}. Denote also by $f^{\rho}_d \in \hat{F}(m \lhd d, m \lhd d, c, c)$ the morphism depicted on the right of Figure \ref{f-encircling-move}, where $\rho$ is the module coherence for $F$.

    \begin{figure}
      \begin{subfigure}[t]{0.32\textwidth}
        \centering
        \includesvg[width=4cm]{alpha-handle-1.svg}
        \subcaption{We start with the coupon labelled by $f_d$, and isotope the $d$-ribbon close to the attaching curve.}
        \label{f-alpha-1}
      \end{subfigure}
      \begin{subfigure}[t]{0.32\textwidth}
        \centering
        \includesvg[width=4cm]{alpha-handle-2.svg}
        \subcaption{The result of sliding the ribbon through the curve.}
        \label{f-alpha-2}
      \end{subfigure}
      \begin{subfigure}[t]{0.32\textwidth}
        \centering
        \includesvg[width=4cm]{alpha-handle-3.svg}
        \subcaption{The $d$-ribbon is brought near the line defect, in preparation for zipping.}
        \label{f-alpha-3}
      \end{subfigure}
      \vspace{1cm}
      \\
      \begin{subfigure}[t]{0.32\textwidth}
        \centering
        \includesvg[width=4cm]{alpha-handle-4.svg}
        \subcaption{The situation at the point defect, after zipping. }
        \label{f-alpha-4}
      \end{subfigure}
      \begin{subfigure}[t]{0.32\textwidth}
        \centering
        \includesvg[width=4cm]{alpha-handle-5.svg}
        \subcaption{Subduction is applied to both of the identity vertices.}
        \label{f-alpha-5}
      \end{subfigure}
      \begin{subfigure}[t]{0.32\textwidth}
        \centering
        \includesvg[width=4cm]{alpha-handle-6.svg}
        \subcaption{The vertices marked $\rho^{-1}$ can be combined, since $\rho$ intertwines modulators (which are not drawn).}
        \label{f-alpha-6}
      \end{subfigure}
      \caption{The handle slide relations for a 2-handle attached along $\alpha$ (the dashed blue curve). The resulting configuration in the bulk is then the morphism $f^{\rho}_d$.}  
      \label{f-alpha-handle}
    \end{figure}

    \begin{lemma}
      \label{l-skein-example-K-minus}
      There is an  isomorphism
      \[
        \Sk(\mathbb{K}_{-}) \cong \left. \left(\int^{(m, c) \in \cM \times \cC} \hat{F}(m, m, c, c)\right) \middle/ \left(f_d \simeq f^{\rho}_d :  f \in \hat{F}(m, m, c \ot d, c \ot d), d \in \cC\right) \right. .
      \]
    \end{lemma}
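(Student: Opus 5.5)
We follow the pattern of Lemma~\ref{l-solid-torus-example}. Example~\ref{eg-cakes-via-handles} says $\mathbb{K}_{-}$ is obtained from $\mathbb{T}$ by attaching a 2-handle along a curve parallel to $\alpha$ in $T^2 \times \{0\}$ and then a 3-handle. By Lemma~\ref{l-handle-slide}, the 3-handle does not change the skein module. The 2-handle induces a surjection $\Sk(\mathbb{T}) \to \Sk(\mathbb{K}_{-})$, whose kernel is generated by handle-slide relations. Combined with Lemma~\ref{l-skein-example-T}, it therefore suffices to show two things. First, every handle-slide relation, written in the normal form of Lemma~\ref{l-skein-example-T}, is a consequence of the relations $f_d \simeq f^{\rho}_d$. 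Second, each relation $f_d \simeq f^{\rho}_d$ is itself realised by a handle slide.

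\textbf{Reduction to a single ribbon slid over the handle.} Take a skein in the normal form of Lemma~\ref{l-skein-example-T}: an object $m$ on the defect, a $c$-ribbon homologous to $\beta$, and one coupon $f$ near the point defect. A general handle slide pushes an arc of some interior ribbon across the attaching disc. We first isotope the sliding arc into a small neighbourhood of the coupon and fuse it into the coupon using interior skein relations. After this, a general slide is equivalent to the following move: start with a coupon $f \in \hat F(m,m,c\otimes d, c\otimes d)$, close off a $d$-loop encircling the $\alpha$-direction, which gives $f_d$, and slide that $d$-loop through the 2-handle. Here we use that any element of $\hat F(m,m,c,c)$ with an extra encircling strand can be written as some $f_d$, by absorbing coupons. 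Checking this reduction carefully, in particular that no other slide configurations arise up to interior relations, is routine but requires care.

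\textbf{Computing the slide.} After the slide, the $d$-loop runs parallel to $\alpha$, and hence parallel to the line defect. We carry out the steps of Figure~\ref{f-alpha-handle}:
\begin{enumerate}
\item bring the $d$-ribbon next to the line defect;
\item zip it onto the defect with the zipping relations, which changes the defect label from $m$ to $m \lhd d$ and creates identity vertices on either side of the point defect;
\item apply subduction (and Lemma~\ref{l-unitors-everywhere}) to push these vertices through the point defect, where they become coupons labelled by $\rho^{\pm1}$;
\item merge the resulting $\rho^{-1}$ coupons, using that $\rho$ intertwines the modulators.
\end{enumerate}
The bulk coupon that results is exactly $f^{\rho}_d \in \hat F(m\lhd d, m\lhd d, c,c)$. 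So the handle-slide relation identifies $f_d$ with $f^{\rho}_d$ in the coend.

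\textbf{Conclusion and main obstacle.} Each step above is a skein relation or an isotopy, so it is reversible. Hence the relations $f_d \simeq f^\rho_d$, ranging over all $f$ and $d$, generate exactly the kernel of $\Sk(\mathbb{T})\to\Sk(\mathbb{K}_-)$ modulo the coend relations, and the stated isomorphism follows. The main obstacle is the bookkeeping in the final step: verifying that the coupon produced by zipping and subduction equals $f^\rho_d$ as a morphism. This requires tracking the orientations of the two segments at $P$ (one incoming copy of $\cM^{\op}$, one of $\cM$), the duals appearing in Convention~\ref{conv-op-as-right-module}, and the modulators that are suppressed in the figure. We would check this by evaluating both sides in a valid cube with $\Ev_{\mathbf{C}}$, using naturality of $\rho$ and its compatibility with $\mu$.
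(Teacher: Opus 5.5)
Your proposal is correct and follows essentially the same route as the paper. You reduce via Lemma~\ref{l-handle-slide} to the slides over the $\alpha$-handle, slide the $d$-loop of a coupon $f_d$ through the handle, zip it onto the line defect, subduct the identity vertices through the point defect to produce $\rho^{-1}$ coupons, and merge these to obtain $f^{\rho}_d$; your explicit reduction of an arbitrary slide to the $f_d$ case just makes precise a step the paper leaves implicit when it starts from ``a coupon of the form $f_d$''.
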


    \begin{proof}
      By Lemma \ref{l-handle-slide} it suffices to understand the handle-slide relations on $\Sk(\mathbb{T})$ for the addition of a 2-handle along a curve homotopic to $\alpha$. Note that the $c$-ribbons cannot be slid through this handle. Given a coupon of the form $f_d$, we can slide the $d$-ribbon through the handle and bring it close to the defect. We can then apply zipping relations to attach the $d$-ribbon along most of the line defect. Near the point defect, we will have vertices labelled by $\id_{m \lhd d^{\vee}}$ and $\id_{m \lhd d}$. Applying subduction to these vertices results in a graph where the whole line defect is labelled by $m \lhd d$, the point defect by $u_{F(m \lhd d \bt_{\cC} m \lhd d)}$, and immediately after the point defect is a coupon labelled by $\rho^{-1}_{m \bt_{\cC} m, d \ot d^{\vee}}$. Bringing the latter coupon towards the coupon labelled $f$ results in the morphism $f^{\rho}_d$. Therefore, the handle-slide relations are precisely those in the statement of the Lemma. See Figure \ref{f-alpha-handle}.
    \end{proof}
    
    \section{Reshetikhin--Turaev with line and surface defects}
    \label{s-defect-RT}

    In this section we assume $\cC$ is a modular fusion category. We begin by recalling the ordinary Reshetikhin--Turaev TQFT, which has ribbon graphs (i.e. line defects) in its bordisms, and its relation to ordinary skein modules. We then sketch the category of bordisms supporting embedded surface defects, as appearing in \cite{CRS18LineSurfaceDefects}. We finally give an overview of the main construction of \cite{CRS18LineSurfaceDefects}, which is the definition of a TQFT from this bordism category based on the ordinary RT theory.   

    \subsection{Reshetikhin--Turaev with line defects}
    We recall that corresponding to any modular fusion category $\cC$ is an oriented TQFT
    \[
      Z^{\mathrm{RT}}: \widehat{\Bord^{\mathrm{rib}}_3}(\cC) \to \Vect
    \]
    where the source is an extension of the category of ribbon bordisms. The objects of $\Bord^{\mathrm{rib}}_3(\cC)$ are closed oriented surfaces $\Surface$ with a collection of $\cC$-marked point defects $\mathbf{D}$, with the points moreover carrying a sign. The morphisms are ribbon bordisms: 3-dimensional cobordisms supporting $\cC$-coloured ribbon graphs compatible with the marked points (i.e. systems of line defects which are connected by interior point defects, hence our terming this ``Reshetikhin--Turaev with line defects''). Compatibility means that ribbons should meet positively signed point defects locally oriented in the direction of bordism, and negatively oriented point defects in the reverse orientation, and  be labelled by the corresponding object.
    
    The hat refers to a particular extension where objects are moreover equipped with a lagrangian in $\mathrm{H}_1(\Surface,\mathbb{R})$, and morphisms equipped with integers which then glue using the Maslov index in the standard way \cite[Ch.\,IV.6]{Tur10QuantumInvariantsKnots}. We will suppress these data in our notation, as they do not play an important role.
    
    We do not recall in detail the construction of the TQFT $Z^{\RT}$ here (see e.g.\ \cite[Ch. IV]{Tur10QuantumInvariantsKnots}), but recall the following important feature. Consider $(\Surface_g, \mathbf{D}) \in \widehat{\Bord^{\mathrm{rib}}_3}(\cC)$, and fix a diffeomorphism $\delta : \Surface_g \xrightarrow{\sim} \partial H_g$ for $H_g$ a handlebody. Then $\delta$ induces a set of point defects in $\partial H_g$, which we also denote by $\mathbf{D}$ by abuse of notation. Then $(H_g, \mathbf{D})$ is a boundary-stratified $\mathfrak{D}^{\cC}$-marked 3-manifold, and we can consider the associated skein module $\Sk(H_g, \mathbf{D})$. Since ribbon graphs $\Gamma$ in $(H_g, \mathbf{D})$ define bordisms $(H_g, \Gamma) : \emptyset \to (\Surface_g, \mathbf{D})$, we have a linear map
    \begin{align*}
      \phi_{\delta} : \Sk(H_g, \mathbf{D}) &\to Z^{\RT}(\Surface_g, \mathbf{D})\\
      [\Gamma] &\mapsto Z^{\RT}(H_g, \Gamma)(1)
    \end{align*}
    This is well-defined since the skein relations occur in a ball, and $Z^{\RT}$ assigns $\Hom$-spaces to balls. 
    
    We may assume that $H_g$ is given with all the 1-handles attached to the boundary of the 0-handle in the eastern hemisphere, and all the defects $V_i$ lying along a great circle in the western hemisphere, giving them a total order. There is then a map 
    \[
        \eta : \Hom_{\cC}(\One, V_1 \ot \dots \ot V_n \ot \cL^{\ot g}) \to \Sk(H_g, \mathbf{D})
    \]
    taking a morphism $f$ to the graph $\Gamma_f$ shown in Figure \ref{f-handlebody}, where $\cL$ is the canonical coend for $\cC$ (see Notation \ref{n-Irr-coend}).

    \begin{figure}
      \centering
      \includesvg[width=8cm]{handlebody.svg}
      \caption{The map $\eta$ takes a morphism $f$ to the skein $[\Gamma_f]$, where $\Gamma_f$ is the graph depicted above. Here $\cL$ is the canonical coend for $\cC$ (see Notation \ref{n-Irr-coend}) and $G = \bigoplus_{i \in \Irr(\cC)} X_i$ is the canonical generator, and $u = \sum_{i \in \Irr(\cC)} \pi_{X_i}^{\vee} \ot \iota_{X_i}$, where $\iota_{X_i}, \pi_{X_i}$ present the simple object $X_i$ as a retract of $G$.}
      \label{f-handlebody}
    \end{figure}
    
    \begin{lemma}
      The map $\eta$ is a surjection.
    \end{lemma}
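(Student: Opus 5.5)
We prove surjectivity by reducing an arbitrary coloured ribbon graph $\Gamma$ in $(H_g, \mathbf{D})$, via isotopy and skein relations, to a graph of the form $\Gamma_f$. Here $\mathbf{D}$ consists only of point defects, so by Remark~\ref{rk-0-point-defects-are-markings} and the remark after Definition~\ref{d-defect-skein-mod} only the interior $\cC$-skein relations and the relations at the point defects (absorbing coupons, Figure~\ref{f-sk-rels-3}) are involved.

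First, view $H_g$ as a $0$-handle $B$ (a ball) with $g$ $1$-handles $D^2 \times I$ attached in the eastern hemisphere. By general position, isotope $\Gamma$ so that inside each $1$-handle it is a collection of parallel strands transverse to the cocore disk $D^2 \times \{1/2\}$, with no coupons and no crossings there. Next, apply a skein relation in a valid cube around each cocore to fuse these parallel strands into a single strand labelled by their tensor product $Y_k$. Concretely, insert a coupon labelled by the identity of $Y_k$ on each side, together with its inverse decomposition inside $B$; this uses only the interior $\cC$-skein relations (Figure~\ref{f-sk-rels-1}).

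The key step is to replace each single strand through the $k$-th handle by the $\cL$-configuration of Figure~\ref{f-handlebody}. Since $\cC$ is semisimple, write $\id_{Y_k} = \sum_\alpha \iota_\alpha \pi_\alpha$ with $\iota_\alpha, \pi_\alpha$ factoring through simples $X_{i_\alpha}$. Each simple $X_i$ is a retract of $G$ via $\iota_{X_i},\pi_{X_i}$. Hence the strand through the handle can be rewritten, by a relation in a cube containing a neighbourhood of the cocore (lying inside $B$ after a small isotopy), as a sum of graphs in which a $G$-labelled strand runs through the handle. The maps $\pi_\alpha$ and $\iota_\alpha$ are pushed into $B$ at the two feet of the handle. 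The two feet of the $G$-strand, together with the element $u=\sum_i \pi_{X_i}^\vee\ot\iota_{X_i}$, then exhibit the configuration of Figure~\ref{f-handlebody}, with the remaining morphism sitting inside $B$ and mapping into $\cL$. The main obstacle is exactly here: checking that a strand through the $1$-handle labelled by any $X_i$ equals the $\Gamma_f$-type picture applied to a morphism into $\cL$ precomposed with $\pi_{X_i}^\vee\ot\iota_{X_i}$. This should follow from the universal property of the coend $\cL$ (Notation~\ref{n-Irr-coend}) together with the orthogonality $\pi_{X_j}\iota_{X_i}=\delta_{ij}\id$.

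Finally, everything outside the $1$-handles lies in the ball $B$, with boundary endpoints at the defects $V_1,\dots,V_n$ along the great circle and at the feet of the handles. Using the point defect relations (Figure~\ref{f-sk-rels-3}), move all point defect labels into the interior, so that the defects carry only unitors. The remaining graph in $B$ is then evaluated by $\Ev$ in a single valid cube to one coupon $f\in\Hom_\cC(\One, V_1\ot\dots\ot V_n\ot\cL^{\ot g})$, once the handle feet are grouped in pairs as $\cL$-factors. By linearity, $[\Gamma]=\sum \eta(f)$, which proves surjectivity.
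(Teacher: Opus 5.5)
Your proposal is correct and follows the paper's argument: you push all coupons into the $0$-handle, reduce to a single simple-labelled ribbon through each $1$-handle, route it through $G$ via $\id_{X_i}=\pi_{X_i}\iota_{X_i}$, factor $\pi_{X_i}^\vee\ot\iota_{X_i}$ through $u$, and collect the rest of the graph in the ball into one coupon. Your order of steps (fuse the parallel strands first, then decompose the fused label into simples) is slightly more careful than the paper's ``without loss of generality'' step. The ``main obstacle'' you flag is immediate, since $u$ restricted to the summand $X_i^\vee\ot X_i$ of $\cL$ is by definition $\pi_{X_i}^\vee\ot\iota_{X_i}$.
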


    \begin{proof}
      By semisimplicity of $\cC$, any skein is a linear combination of skeins where the ribbons are all labelled by simple objects of $\cC$. 
      Given any such skein, we first isotope all coupons into the 0-handle of $H_g$. Without loss of generality, each 1-handle then contains a unique ribbon parallel to its core labelled by some simple object $X$. We note that $X$ is a retract of the generator $G = \bigoplus_{i \in \Irr(\cC)} X_i$, and we insert the coupon $\id_X = \pi_X \iota_X$ into the $X$-ribbon and isotope the inclusion and projection to opposite ends of the 1-handle. The skein now has the label $G$ around this 1-handle, and in a neighbourhood of the attaching region in the 0-handle we have a coupon labelled by $\pi_X^{\vee} \ot \iota_X : X^{\vee} \ot X \to G^{\vee} \ot G$. This morphism factors uniquely through a morphism $u := \sum_{i \in \Irr(\cC)} \pi_{X_i}^{\vee} \ot \iota_{X_i} : \cL \to G^{\vee} \ot G$.      
      We then collect any other coupons within the 0-handle to a single coupon. The resulting skein is in the image of $\eta$.
    \end{proof}
       
    We can then consider the composite $\alpha_{\delta} = \phi_{\delta} \circ \eta$.

    \begin{lemma}
      The map 
      \[
        \alpha_{\delta} : \Hom_{\cC}(\One, \cL^{\ot g} \ot V_1 \ot \dots \ot V_n) \to Z^{\RT}(\Surface_g, \mathbf{D})
      \]
      is an isomorphism.
    \end{lemma}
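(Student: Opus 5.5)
We will prove the statement by exhibiting $\alpha_\delta$ as a composite of known isomorphisms, rather than by constructing an inverse directly. The key input is the standard description of the Reshetikhin--Turaev state space of a surface given as the boundary of a handlebody, \cite[Ch.\,IV]{Tur10QuantumInvariantsKnots}. There, $Z^{\RT}(\Surface_g,\mathbf{D})$ is identified with $\bigoplus_{i_1,\dots,i_g\in\Irr(\cC)}\Hom_{\cC}(\One, V_1\ot\dots\ot V_n\ot X_{i_1}^\vee\ot X_{i_1}\ot\dots\ot X_{i_g}^\vee\ot X_{i_g})$. The isomorphism sends a morphism $f$ to the vector $Z^{\RT}(H_g,\Gamma'_f)(1)$, where $\Gamma'_f$ is the ribbon graph with a coupon $f$ in the $0$-handle and a ribbon labelled $X_{i_k}$ running along the core of the $k$-th $1$-handle. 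Since $\cL=\bigoplus_i X_i^\vee\ot X_i$ and $\ot$ distributes over finite direct sums, this direct sum is canonically $\Hom_{\cC}(\One, V_1\ot\dots\ot V_n\ot\cL^{\ot g})$. So the domain and codomain of $\alpha_\delta$ have the same (finite) dimension, and it is enough to show that $\alpha_\delta$ agrees with Turaev's isomorphism under this identification.

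To do this, I will unpack the graph $\Gamma_f$ of Figure~\ref{f-handlebody}. Decompose $f=\sum_{\underline i}(\id\ot\iota_{\underline i})\circ f_{\underline i}$ along the direct sum decomposition of $\cL^{\ot g}$. In $\Gamma_f$, each $1$-handle carries a $G$-ribbon whose two ends meet the coupon $u:\cL\to G^\vee\ot G$. Precomposing $u$ with the inclusion of the summand $X_i^\vee\ot X_i$ gives $\pi_{X_i}^\vee\ot\iota_{X_i}$. Using the interior skein relations, which are sent by $\phi_\delta$ to equalities because $Z^{\RT}$ is compatible with the graphical calculus in balls, I will slide $\pi_{X_i}$ around the $1$-handle to meet $\iota_{X_i}$. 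Their composite is $\pi\circ\iota=\id_{X_i}$, and $\iota\circ\pi$ for a different simple does not occur because the components are chosen summand by summand. This turns $\Gamma_f$ into $\sum_{\underline i}\Gamma'_{f_{\underline i}}$, so $\alpha_\delta(f)=\sum_{\underline i} Z^{\RT}(H_g,\Gamma'_{f_{\underline i}})(1)$, which is exactly Turaev's isomorphism applied to $f$.

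The main obstacle is the bookkeeping of conventions in the last step. We have to match the ordering and duals of the $V_j$, which may appear as $V_j^\vee$ depending on sign, and the placement of $\cL^{\ot g}$ before or after the $V$'s. (The statement writes $\cL^{\ot g}\ot V_1\ot\cdots$ while $\eta$ uses the other order. This differs by a braiding isomorphism, and composing with it does not affect bijectivity.) We also have to check that the Maslov and Lagrangian data of the extended bordism category contribute only a nonzero scalar, which again does not affect bijectivity.

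If we want to avoid relying on the explicit form of Turaev's basis, there is a fallback. Injectivity can be checked through the nondegenerate pairing $Z^{\RT}(\Surface_g)\otimes Z^{\RT}(-\Surface_g)\to\mathbf{k}$ given by gluing two handlebodies along $\delta$ into $\#^g S^2\times S^1$, evaluated on the graphs $\Gamma'$. The dimension count then finishes the argument, using the Verlinde-type formula $\dim Z^{\RT}(\Surface_g,\mathbf{D})=\dim\Hom(\One,V_1\ot\dots\ot V_n\ot\cL^{\ot g})$.
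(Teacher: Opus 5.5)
Your proposal is correct and takes the same route as the paper, whose proof simply appeals to Turaev's construction of the state space via the standard handlebody, with a simple-coloured ribbon running through each $1$-handle. Your reduction of $\Gamma_f$ to the graphs $\Gamma'_{f_{\underline i}}$, obtained by sliding $\pi_{X_i}$ around each handle onto $\iota_{X_i}$, spells out the step the paper leaves implicit. The discrepancies you flag do not affect bijectivity: the order of $\cL^{\ot g}$ relative to the $V_j$ (a genuine typo in the statement), the duals, and any nonzero normalization or anomaly scalars.
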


    \begin{proof}
      This is essentially the definition of the Reshetikhin--Turaev theory, see \cite[\S IV.1.4]{Tur10QuantumInvariantsKnots}.
    \end{proof}

    \begin{cor}
      The maps $\eta$ and $\phi_{\delta}$ are isomorphisms.
    \end{cor}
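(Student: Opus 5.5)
The corollary follows formally from the two preceding lemmas together with the factorization $\alpha_{\delta} = \phi_{\delta} \circ \eta$. The plan is to use a standard two-out-of-three argument for the composite of linear maps.

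First, since $\alpha_{\delta} = \phi_{\delta} \circ \eta$ is an isomorphism, $\eta$ is injective: if $\eta(f) = 0$ then $\alpha_{\delta}(f) = \phi_{\delta}(\eta(f)) = 0$, so $f = 0$. By the preceding lemma $\eta$ is also surjective, so $\eta$ is a linear isomorphism. One should take care that the source of $\eta$ and of $\alpha_{\delta}$ are written with the tensor factors in different orders ($V_1 \ot \dots \ot V_n \ot \cL^{\ot g}$ versus $\cL^{\ot g} \ot V_1 \ot \dots \ot V_n$). I would identify these via the braiding, or equivalently just read the lemma for $\alpha_{\delta}$ with the ordering used in the definition of $\eta$ and Figure~\ref{f-handlebody}. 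Either way this is a fixed isomorphism of $\Hom$-spaces and does not affect the argument.

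Second, $\phi_{\delta} = \alpha_{\delta} \circ \eta^{-1}$ is a composite of isomorphisms, and is therefore an isomorphism.

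There is no real obstacle here. The substantive inputs, surjectivity of $\eta$ (which uses semisimplicity of $\cC$ and the coend $\cL$) and the fact that $\alpha_{\delta}$ is an isomorphism (which is the construction of $Z^{\RT}$ in \cite{Tur10QuantumInvariantsKnots}), are supplied by the two lemmas. The only point needing attention is the tensor-ordering identification just mentioned.
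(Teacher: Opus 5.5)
Your proposal is correct and is exactly the argument the paper intends: the paper gives no separate proof, since the corollary follows at once from surjectivity of $\eta$ and bijectivity of $\alpha_{\delta} = \phi_{\delta} \circ \eta$. These force $\eta$ to be injective, hence an isomorphism, and then $\phi_{\delta} = \alpha_{\delta} \circ \eta^{-1}$ is an isomorphism as well. Your remark on the differing tensor-factor order in the two $\Hom$-spaces correctly flags a notational inconsistency in the paper and does not affect the argument.
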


    We note that descriptions of the Reshetikhin--Turaev state space via skein modules go back to \cite{BHMV95TopologicalQuantumField} in the case where $\Surface$ has no marked points. See \cite[Thm.\,4.2]{CGHP23Skein3+1TQFTsNonsemisimple} for a version where $\cC$ need not be fusion, and \cite[Eqn.\,(9.5)]{Tha21CategoryBoundaryValues} for a statement corresponding to our setting where $\cC$ is fusion and $\Surface$ may support marked points in the boundary.

    In fact, the Reshetikhin--Turaev state space can be described using any 3-manifold bounded by $\Surface$. Recall that, for $M, N$ two 3-manifolds with diffeomorphic boundary, there exists a framed link $L \subset M$ such that the manifold $M_L$ obtained by surgery along $L$ in $M$ produces $N$. The following is well-known, see \cite[Prop.\,5.2, Thm.\,5.8]{CGHP23Skein3+1TQFTsNonsemisimple} for a statement for closed manifolds, and \cite[Def.\,5.11, Thm.\,5.25]{Tha21CategoryBoundaryValues} for a statement corresponding to our setting.

    \begin{lemma}
      \label{l-skeins-2-h-iso}
      Let $(M, \mathbf{D})$ be a connected 3-manifold with $\cC$-marked points in the boundary, $L \subset M$ a framed link, and $(M_L, \mathbf{D})$ the result of surgery along $L$. Then there is an isomorphism
      \begin{align*}
       \zeta_L : \Sk(M, \mathbf{D}) &\xrightarrow{\sim} \Sk(M_L, \mathbf{D})\\
        [\Gamma] &\mapsto [\Gamma \cup \Omega]
      \end{align*}
      where $\Omega$ is the graph $\sum_{i \in \Irr(\cC)} d_i \omega_i$, with $\omega_i$ given by a ribbon parallel to a meridian of a tubular neighbourhood of $L$ coloured by the simple object $X_i$, $d_i = \mathrm{qdim}(X_i) = \mathrm{tr}(\id_{X_i})$.
    \end{lemma}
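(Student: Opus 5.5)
Since surgery along $L$ is local, I will first reduce to the case of a single component: $L = K_1 \cup \dots \cup K_r$, and surgery on $L$ is the composite of surgeries on the $K_j$, each performed inside a tubular neighbourhood $N(K_j)$ disjoint from the others and from $\partial M$. The map $\zeta_L$ is then the composite of the maps $\zeta_{K_j}$, because the meridian colourings $\Omega$ sit in disjoint neighbourhoods. It therefore suffices to treat a single framed knot $K$. I will write $M_K = (M \setminus N(K)) \cup_{T^2} (D^2 \times S^1)$, where the meridian $\{\ast\}\times \partial D^2$ of the new solid torus is glued to the framing curve of $K$.

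The key step is a handle decomposition relative to $M^\circ := M \setminus N(K)$. The manifold $M$ is $M^\circ$ with a $2$-handle attached along a meridian $m$ of $K$, followed by a $3$-handle. Likewise $M_K$ is $M^\circ$ with a $2$-handle attached along the framing longitude $\ell$, followed by a $3$-handle. Lemma~\ref{l-handle-slide} (whose proof, as noted, is unaffected by boundary marked points) then identifies $\Sk(M)$ and $\Sk(M_K)$ with quotients of $\Sk(M^\circ)$ by the slides over $m$ and over $\ell$ respectively. I will realise $\zeta_K$ as follows: choose a representative $\Gamma$ in $M^\circ$, add a parallel copy of the core of the new solid torus, and colour it by $\Omega$. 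Pushed back into $M_K$, this is the meridian of $K$ coloured by $\Omega$, as in the statement.

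Next I will invoke the standard properties of the Kirby colour $\Omega$ in a modular fusion category; these rely on non-degeneracy of the $S$-matrix, i.e.\ on $Z_2(\cC)\simeq\Vect$. The first property is \emph{handle slide}: a ribbon may be slid over an $\Omega$-coloured component, by the fusion identity $\sum_i d_i\,\omega_i$ encircling $X\ot Y$ versus $X$ alone. The second is the \emph{killing property}: an $\Omega$-loop encircling a strand coloured by a simple $X_i$ gives $\dim(\cC)\,\delta_{i,\One}$ times that strand. The handle-slide property shows that the image of $\Gamma\cup\Omega$ respects the slide relations over $\ell$, so $\zeta_K$ is well defined on $\Sk(M)$. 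Applying the same construction to the dual surgery, which recovers $M$ from $M_K$, gives a map $\zeta'$ in the other direction. The composite $\zeta'\circ\zeta_K$ adds to $\Gamma$ two linked $\Omega$-loops, one of which encircles the other. By the killing property, the encircled loop projects onto the $\One$-component and the pair evaluates to a nonzero scalar multiple of the empty graph (a power of $\dim\cC$, together with a framing/Gauss-sum factor). Hence $\zeta_K$ is an isomorphism up to this normalisation, which I will absorb into $\Omega$ or into the hat-extension data. Alternatively, $\zeta_K$ can be compared with $Z^{\RT}$ via the isomorphisms $\phi$ of the preceding Corollary, using that $Z^{\RT}$ is invariant under surgery.

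The main obstacle is the second step, where the two different handle-slide quotients must be matched precisely. Concretely, one has to check that the $\ell$-slide relations in $\Sk(M_K)$ correspond exactly to relations already present in $\Sk(M)$ once $\Omega$ is inserted, so that nothing is lost and no extra kernel appears. I expect to handle this by the double-surgery argument above rather than by a direct comparison. Keeping track of the anomaly scalar there is also delicate, and I would simply state that it is nonzero.
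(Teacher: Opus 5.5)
Your argument is the standard direct skein-theoretic proof, and it goes through once a few slips are repaired. The paper itself gives no proof: it calls the lemma well known and defers to Costantino--Geer--Ha\"{\i}oun--Patureau-Mirand for closed manifolds and to Tham's thesis for the case with marked boundary points. Your route buys a self-contained argument with an explicit inverse, $\zeta_K^{-1}=(\dim\cC)^{-1}\zeta'$. It also makes plain that $\mathbf{D}$ plays no role, since everything happens in a neighbourhood of $L$ disjoint from $\partial M$. One small correction to your framing: sliding over an $\Omega$-coloured component needs only semisimplicity and the ribbon structure. Non-degeneracy enters only through the killing property and the nonvanishing of $\dim\cC$.

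\textbf{Well-definedness.} Well-definedness of $\zeta_K$ on $\Sk(M)=\Sk(M^\circ)/(\text{slides over } m)$ needs compatibility with slides over the meridian $m$, not over $\ell$. Slides over $\ell$ are relations that already hold in the target $\Sk(M_K)$. The handle-slide property of $\Omega$ gives exactly the needed compatibility, because in $M_K$ the $\Omega$-ribbon is a pushoff of $m$ framed by the torus, which is the framing used for the $m$-slides. The $\ell$-slides matter only for well-definedness of $\zeta'$, where the roles swap: the dual knot $K'$ is the core of the new solid torus with framing $m$, and its meridian is $\ell$. So the ``main obstacle'' of matching the two handle-slide quotients never arises.

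\textbf{Both composites.} Computing $\zeta'\circ\zeta_K$ alone shows only that $\zeta_K$ is injective. You also need $\zeta_K\circ\zeta'=\dim\cC\cdot\mathrm{id}$. This follows from the same computation, because the surgery dual to $(K',m)$ is $(K,\ell)$, so the map attached to it is $\zeta_K$ itself.

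\textbf{The scalar.} The scalar is exactly $\dim\cC=\sum_i d_i^2$, with no framing or Gauss-sum factor. The killing property deletes the encircling $\Omega$-loop and leaves the encircled component coloured by $\One$. This scalar is nonzero for modular $\cC$, so $\zeta_K$ is already invertible. Nothing should be absorbed into $\Omega$, since rescaling $\Omega$ would change the map in the statement.

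\textbf{The alternative route.} Comparing with $Z^{\mathrm{RT}}$ is circular within this paper. At this point $\phi_\delta$ is known to be an isomorphism only for handlebodies. The general case is deduced from this very lemma via Lemma~\ref{l-ev-triangle}.
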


    Given any 3-manifold $M$ with a diffeomorphism $\delta : \Surface \xrightarrow{\sim} \partial M$, this also induces a diffeomorphism $\delta_L : \Surface \xrightarrow{\sim} \partial M_L$. Similarly to the case of handlebodies described above, we have maps $\phi_{\delta}, \phi_{\delta_L}$ from the respective skein modules to the Reshetikhin--Turaev state space. It follows from the definition of $Z^{\RT}$ on bordisms that
    \[
      \phi_{\delta} = \lambda_L \phi_{\delta_L} \circ \zeta_L
    \]
    where $\lambda_L$ 
    is built out of the gluing anomaly factor and the global dimension of $\cC$
    and can be read off from the link $L$. Defining $\widehat{\zeta_L} = \lambda_L \zeta$, we have the following.

    \begin{lemma}
      \label{l-ev-triangle}
      The triangle
      \[\begin{tikzcd}
        {\Sk(M, \mathbf{D})} && {\Sk(M_L, \mathbf{D})} \\
        \\
        & {Z^{\RT}(\Surface, \mathbf{D})}
        \arrow["{\widehat{\zeta_L}}", from=1-1, to=1-3]
        \arrow["{\phi_{\delta}}"', from=1-1, to=3-2]
        \arrow["{\phi_{\delta_L}}", from=1-3, to=3-2]
      \end{tikzcd}\]
      commutes.
    \end{lemma}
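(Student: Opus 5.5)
The plan is to reduce the commutativity of the triangle to a single equality in $Z^{\RT}$, using functoriality of the TQFT under gluing together with the definition of $\lambda_L$. Concretely, $\widehat{\zeta_L} = \lambda_L \zeta_L$, so for a skein $[\Gamma] \in \Sk(M, \mathbf{D})$ I need to show
\[
  \lambda_L \, \phi_{\delta_L}([\Gamma \cup \Omega]) = \phi_\delta([\Gamma]).
\]
Both sides are determined by $\Gamma$. By Lemma~\ref{l-skeins-2-h-iso} the map $\zeta_L$ is already well defined on skein classes, so it suffices to check the equality on representative graphs $\Gamma$.

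I would first decompose $M$ as $M = M' \cup_{\partial N(L)} N(L)$, where $N(L)$ is a tubular neighbourhood of $L$ disjoint from $\Gamma$ after an isotopy. Then $M_L = M' \cup (\text{solid tori glued along the surgery framing})$. By functoriality of $Z^{\RT}$, both $(M,\Gamma)$ and $(M_L, \Gamma\cup\Omega)$ are compositions of the common bordism $(M', \Gamma)$ with, respectively, the standard solid tori $N(L)$ and the surgery solid tori carrying the $\Omega$-colourings. The Maslov corrections in the gluing are the anomaly contributions, and they are the same for every $\Gamma$.

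The key local input, and the step I expect to be the main obstacle, is the standard identity in $Z^{\RT}(T^2)$: the vector given by the solid torus $D^2\times S^1$ glued in the original way equals, up to the factor $\mathcal{D}^{-1}$ and a power of the anomaly $p_\pm/\mathcal{D}$, the vector given by the surgery solid torus with a core ribbon coloured $\Omega = \sum_i d_i X_i$. This is the Kirby colour / $S$-matrix identity $\sum_i d_i S_{0i}$-type computation (Turaev, Ch.~II--IV), equivalently the fact that $Z^{\RT}$ of the $S$-transformation acts on the basis of core-coloured solid tori by the modular $S$-matrix.

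Applying this to each component of $L$ expresses $Z^{\RT}(M,\Gamma)(1)$ as a scalar multiple of $Z^{\RT}(M_L, \Gamma\cup\Omega)(1)$. The scalar is a product of powers of $\mathcal{D}$ and $p_\pm$, determined by the number of components and the signature of the linking matrix of $L$. This scalar is exactly what the paper calls $\lambda_L$, so fixing it this way is consistent with the sentence preceding the statement, and it is independent of $\Gamma$. Note that a meridian of $N(L)$ becomes parallel to the core of the surgery torus, which is why $\Omega$ is drawn as meridians. Since $\phi_\delta$ and $\phi_{\delta_L}$ are defined by evaluating $Z^{\RT}$ on these bordisms with boundary identified via $\delta$ and $\delta_L$ (which agree away from $N(L)$), the triangle commutes.
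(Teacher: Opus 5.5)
Your argument is correct, but it takes a more explicit route than the paper. The paper gives no separate proof. It asserts that $\phi_\delta = \lambda_L\,\phi_{\delta_L}\circ\zeta_L$ ``follows from the definition of $Z^{\mathrm{RT}}$ on bordisms'' and defines $\lambda_L$ to be the resulting normalisation (gluing anomaly and powers of the global dimension). The lemma is then immediate from $\widehat{\zeta_L}=\lambda_L\zeta_L$. Implicitly, this relies on Turaev's surgery construction: $M$ is obtained from $M_L$ by surgery on the dual link, i.e.\ on the meridians of $L$, so the $\Omega$-coloured meridians are built into $Z^{\mathrm{RT}}(M,\Gamma)$.

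You instead derive the relation from the gluing axiom. You split off the common piece $(M',\Gamma)$ and reduce everything to one computation in the torus state space: up to the anomaly factor, the empty solid torus equals $\mathcal{D}^{-1}\sum_i d_i[X_i]$ in the core-coloured basis of the surgery solid torus, since $S_{0i}=d_i/\mathcal{D}$. This makes the proof self-contained for any model of $Z^{\mathrm{RT}}$ satisfying gluing. It also shows plainly why the scalar does not depend on $\Gamma$: the Maslov corrections only involve the Lagrangians of $M'$, $N(L)$ and the surgery tori. The paper's route avoids the torus computation by appealing to the construction itself.

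Two small points are worth tightening.
\begin{enumerate}
\item The $\Omega$-ribbon must carry the surface framing of the meridian $\mu_L$, so that the change of basis is exactly $S$. If the core is framed differently by $k$ twists, the coefficients become $d_i\theta_i^{\pm k}$ and the Kirby colour no longer appears.
\item ``Signature of the linking matrix'' only makes sense where linking numbers are defined. For general $M$, it is enough to say that the scalar is the product of the anomaly factors from the two gluings.
\end{enumerate}
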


    \begin{cor}
      Let $M$ be a 3-manifold and $\delta : \Surface \xrightarrow{\sim} \partial M$ a diffeomorphism. Then the map $\phi_{\delta}$ is an isomorphism.
    \end{cor}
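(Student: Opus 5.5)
The statement is the corollary that $\phi_{\delta}$ is an isomorphism for an arbitrary 3-manifold $M$ with $\delta : \Surface \xrightarrow{\sim} \partial M$. My plan is to reduce to the handlebody case, already settled in the preceding corollary, by surgery, and then transport the isomorphism along the commuting triangle of Lemma~\ref{l-ev-triangle}.

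First I reduce to connected $M$, so that Lemma~\ref{l-skeins-2-h-iso} applies. If $M = \bigsqcup_k M_k$, skeins in $M$ are spanned by disjoint unions of skeins in the components. Since every valid cube lies in a single component, the skein relations factor componentwise, so $\Sk(M, \mathbf{D}) \cong \bigotimes_k \Sk(M_k, \mathbf{D}_k)$. On the other side $Z^{\RT}$ is monoidal, so $Z^{\RT}(\partial M) \cong \bigotimes_k Z^{\RT}(\partial M_k)$. Under these identifications $\phi_{\delta}$ is the tensor product of the componentwise maps $\phi_{\delta_k}$, because $Z^{\RT}$ of a disjoint union of bordisms is the tensor product. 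This reduces the claim to connected $M$.

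For connected $M$, choose a handlebody $H_g$ (or a disjoint union of handlebodies, matching the components of $\Surface$) together with an identification $\delta' : \Surface \xrightarrow{\sim} \partial H_g$. By the fact recalled before Lemma~\ref{l-skeins-2-h-iso}, there is a framed link $L \subset H_g$ such that surgery along $L$ produces $(H_g)_L \cong M$ rel boundary, and this identification carries $\delta'_L$ to $\delta$. Here I must make sure the surgery identification is compatible with the boundary parametrisations and the marked points. I expect this to be the only real subtlety: the point is that surgery is performed away from the boundary, so the boundary identification is unchanged.

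With this in place, Lemma~\ref{l-ev-triangle} gives $\phi_{\delta'} = \phi_{\delta} \circ \widehat{\zeta_L}$. By the preceding corollary, $\phi_{\delta'}$ is an isomorphism. By Lemma~\ref{l-skeins-2-h-iso}, $\zeta_L$ is an isomorphism, and so is $\widehat{\zeta_L} = \lambda_L \zeta_L$, provided the scalar $\lambda_L$ is nonzero. This holds because $\lambda_L$ is a product of gluing anomaly factors and powers of square roots of the global dimension, all invertible for a modular fusion category. Hence $\phi_{\delta} = \phi_{\delta'} \circ \widehat{\zeta_L}^{-1}$ is an isomorphism.

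The step needing most care is the nonvanishing of $\lambda_L$, together with checking that the triangle is stated with the correct boundary parametrisations. The rest is formal.
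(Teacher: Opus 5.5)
Your argument is essentially the paper's. The paper takes $L \subset M$ with $M_L$ a handlebody, so that $\phi_\delta = \phi_{\delta_L}\circ\widehat{\zeta_L}$ is directly a composite of isomorphisms, whereas you surger the handlebody into $M$ and invert $\widehat{\zeta_L}$; this is the same two-out-of-three argument using Lemma~\ref{l-ev-triangle}, the handlebody corollary and Lemma~\ref{l-skeins-2-h-iso}. One small correction: your parenthetical ``disjoint union of handlebodies'' cannot serve as a model for a connected $M$ with disconnected boundary, since surgery on a link preserves connected components; the paper's proof likewise tacitly assumes that a handlebody with boundary $\Surface$ exists, i.e.\ that $\Surface$ is connected.
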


    \begin{proof}
      Any such $M$ can be presented as surgery along some link in a handlebody $H$ with boundary $\Surface$. Dually, there exists a link $L$ in $M$ such that $M_L$ is a handlebody. Then we apply Lemma \ref{l-ev-triangle}, and use that the evaluation map for a handlebody is an isomorphism.
    \end{proof}

    \begin{notn}
      \label{n-RT-sk-isos}
      We denote by $\psi_{\delta}$ the inverse to $\phi_{\delta}$. 
    \end{notn}

    We do not recall the construction of $Z^{\mathrm{RT}}$ on general bordisms here, but remark on its description for cylinder bordisms, which are important for our constructions, in terms of skein theory. Let $(N, \Gamma) : (\Surface, \mathbf{D}) \to (\Surface, \mathbf{D'})$ a ribbon bordism where $N$ is a cylinder over $\Surface$, and $M$ any manifold with boundary $\Surface$.
    The
    linear map
      \[
        \Sk(N, \Gamma) : \Sk(M, \mathbf{D}) \to \Sk(M, \mathbf{D'})
      \]
      is given for $[\Delta] \in \Sk(M, \mathbf{D})$ by attaching $\Gamma$ to the corresponding representative $\Delta$.
      
      \begin{lemma}
        \label{l-cylinder-surgery-commutes}
        Let $(\Surface, \mathbf{D}) \in \widehat{\Bord^{\mathrm{rib}}_3}(\cC)$, $M$ a 3-manifold, and $\psi : \Surface \xrightarrow{\sim} \partial M$. Let  $(N, \Gamma) : (\Surface, \mathbf{D}) \to (\Surface, \mathbf{D'})$ a cylindrical ribbon bordism. Then the diagram
        \[\begin{tikzcd}
          {\Sk(M, \mathbf{D})} && {Z^{\RT}(\Surface, \mathbf{D})} \\
          \\
          {\Sk(M, \mathbf{D'})} && {Z^{\RT}(\Surface, \mathbf{D'})}
          \arrow["{\phi_{\delta}}", from=1-1, to=1-3]
          \arrow["{\Sk(N, \Gamma)}"', from=1-1, to=3-1]
          \arrow["{Z^{\RT}(N, \Gamma)}", from=1-3, to=3-3]
          \arrow["{\phi_{\delta}}"', from=3-1, to=3-3]
        \end{tikzcd}\]
        commutes.
      \end{lemma}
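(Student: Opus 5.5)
The plan is to reduce to the case where $M$ is a handlebody and $(N,\Gamma)$ is simple, using the functoriality of $Z^{\RT}$ together with the isomorphisms $\phi_\delta$ already established.

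First, view an element $[\Delta]\in\Sk(M,\mathbf{D})$ as the bordism $(M,\Delta):\emptyset\to(\Surface,\mathbf{D})$, so that $\phi_\delta([\Delta]) = Z^{\RT}(M,\Delta)(1)$.

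Next, compute $Z^{\RT}(N,\Gamma)\circ\phi_\delta([\Delta])$. By functoriality of $Z^{\RT}$ under gluing, this equals
\[
Z^{\RT}\big((N,\Gamma)\circ_{\Surface}(M,\Delta)\big)(1)
\]
up to the gluing anomaly coming from the Maslov index of the Lagrangians. The glued bordism is $M\cup_\Surface N$, which is diffeomorphic to $M$ rel boundary because $N$ is a cylinder. This diffeomorphism can be chosen to be the identity away from a collar, and under it the ribbon graph $\Delta\cup\Gamma$ is exactly the representative of $\Sk(N,\Gamma)([\Delta])$. Hence the right-hand side is $\phi_\delta(\Sk(N,\Gamma)[\Delta])$ on the nose, provided the anomaly factor is trivial.

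The step I expect to need the most care is that anomaly factor. Gluing a cylinder carries the Lagrangian $\lambda$ of $\Surface$ through unchanged, since the cylinder induces the identity on $H_1(\Surface)$. The Maslov index $\mu(\lambda_M,\lambda,\lambda)$ therefore vanishes. The integer weight on the cylinder should be chosen to be $0$, which is the convention making $Z^{\RT}$ of an undecorated cylinder the identity. With that choice the factor is $1$.

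One also has to check that $\Sk(N,\Gamma)$ is well-defined on skeins. This holds because skein relations supported in cubes of $M$ remain skein relations in $M\cup N\cong M$. Linearity then gives commutativity on all of $\Sk(M,\mathbf{D})$.
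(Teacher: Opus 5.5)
Your argument is correct and is essentially the paper's proof. Both composites equal $Z^{\RT}$ of the glued bordism $(M\cup_{\Surface}N,\Delta\cup\Gamma)\cong(M,\Delta\cup\Gamma)$ applied to $1$, and the gluing anomaly is trivial because two of the Lagrangians in the Maslov index coincide, as the paper notes in the remark after the lemma. The reduction to handlebodies announced in your first sentence is never used, so you can drop it.
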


      \begin{proof}
        The upper composite takes a skein $[\Delta] \in \Sk(M)$ to
        \[
          Z^{\RT}((\Surface, \mathbf{D'}) \xleftarrow{(N, \Gamma)} (\Surface, \mathbf{D}) \xleftarrow{(M, \Delta)} \emptyset)(1)
        \]
        which agrees with the lower composite.
      \end{proof}

      We will use Lemma \ref{l-cylinder-surgery-commutes} to express $Z^{\RT}$ on cylindrical ribbon bordisms using skein theory, as follows:
      \begin{equation}
        \label{eq-RT-bordisms-via-skeins}
          Z^{\RT}(N, \Gamma) = \phi_{\delta} \Sk(N, \Gamma) \psi_{\delta}.
      \end{equation}

      \begin{rmk}
        The diagram of Lemma \ref{l-cylinder-surgery-commutes} commutes up to the gluing anomaly for an arbitrary ribbon bordism. In the case of cylinder bordisms, two of the lagrangians involved are identical and the Maslov cocycle vanishes, so the gluing anomaly is $1$.
      \end{rmk}

      \begin{figure}[h]
        \centering
        \includesvg[width=5cm]{torus-RT-example.svg}
        \caption{Below: a skein in the solid torus ending at marked points $X, A$, and with a vertex coloured by $f : X \ot U \to A \ot U$.  Above: a ribbon bordism which is topologically a cylinder over the torus. This may  be glued to the lower  part and then induces a linear map on skein modules, hence on Reshetikhin--Turaev state spaces. The vertices in the cylinder are coloured by the comultiplication $\Delta$ of $A$ and the left and right action maps $\lambda, \rho$ of $A$ on the $A-A$-bimodule $X$. Here framings of vertices are omitted and inputs/outputs can be inferred from the labels.}
        \label{f-solid-torus-RT}
      \end{figure}

      \begin{eg}
        \label{eg-solid-torus-RT}
        Let  $\Surface  = T^2$ a torus with two marked points labelled $\{(X, -), (A, +)\}$. Let $H$ be a solid torus and $\delta : \Surface  \xrightarrow{\sim} \partial H$. The bottom part of Figure \ref{f-solid-torus-RT} shows a representative of an element of $\Sk(H, \{X, A\})$, hence an element of $Z^{\mathrm{RT}}(\Surface, \{(X, -), (A, +)\})$ under $\phi_{\delta}$. The top part of the figure shows a cylinder $T^2 \times I$  with an embedded ribbon. Attaching this cylinder to $H$ defines a linear map  
        \[
          \Sk(T^2 \times I, \Gamma) : \Sk(H, \{X, A\}) \to \Sk(H, \{X, A\})
        \] 
        which induces the value of $Z^{\mathrm{RT}} $ on this ribbon bordism, via (\ref{eq-RT-bordisms-via-skeins}).
      \end{eg}

    \subsection{Defect bordisms}
    \label{ss-defect-bord}

    In \S \ref{s-defect-skein}, we described skein modules for 3-manifolds with a network of defects labelled by the defect system $\mathfrak{D}^{\cC}$. 
    In \cite{CRS18LineSurfaceDefects}, a defect bordism category $\widehat{\Bord^{\mathrm{def}}_3}(\overline{\mathfrak{D}^{\cC}})$ 
    is defined, where the defects are labelled by a related system $\overline{\mathfrak{D}^{\cC}}$. In the treatment of \cite{CRS18LineSurfaceDefects}, line defects in surfaces meet at point defects with 
no separation of these line defects into a set of incoming and a set of outgoing segments.
    To connect to the more general setup we established in \S \ref{s-defect-skein}, we regard the stratified surfaces of \cite{CRS18LineSurfaceDefects} as supporting defects with only incoming line defects at point defects.

    \begin{defn}
      Let $(S, \mathbf{L}, \mathbf{P})$ be a p-framed stratified surface. We say that $(S, \mathbf{L}, \mathbf{P})$ is \emph{in-stratified} if at every 0-stratum, all incident 1-strata are incoming in the sense of Definition \ref{d-in-out-surf}. We also say a boundary-stratified 3-manifold $M$ is \emph{in-stratified} if $\partial M$ is.
    \end{defn}
    
    The defect system $\overline{\mathfrak{D}^{\cC}}$ is adapted for marking in-stratified surfaces, so it only has a source map at level 1, unlike $\mathfrak{D}^{\cC}$ which 
    also has a target map. Moreover, the data of $\overline{\mathfrak{D}^{\cC}}$ is all assumed to be semisimple, making it suitable for Reshetikhin--Turaev constructions. In more detail, we have:
    \begin{itemize}
      \item $\overline{\mathfrak{D}^{\cC}_3} = \{ \cC \}$
      \item $\overline{\mathfrak{D}^{\cC}_2} = \{ \Delta\text{-separable symmetric Frobenius algebras in }\cC\}$
     \item $\overline{\mathfrak{D}^{\cC}_1} = \coprod_{N \in \N} \overline{\mathfrak{F}_N}$ where
      \begin{align*}
        \overline{\mathfrak{F}_N} = \{ ((A_i)_{i = 1}^N, X) \mid A_i \in \overline{\mathfrak{D}^{\cC}_2}, X \text{ a right } (A_1, \dots, A_N)\text{-multimodule}\}.
      \end{align*}
      (As before, we usually refer to data in $\overline{\mathfrak{D}^{\cC}_1}$ simply by the multimodule).
      \item The source map:
      \begin{align*}
        \overline{\mathfrak{s}} : \overline{\mathfrak{D}^{\cC}_1} &\to \coprod_{N \in \N} T_N(\overline{\mathfrak{D}^{\cC}_2})\\
        ((A_i)_{i = 1}^N, F) &\mapsto (A_i)_{i = 1}^N.
      \end{align*}
    \end{itemize}

    \begin{rmk}
        We can moreover consider point defects in the 3d-bordisms in $\widehat{\Bord^{\mathrm{def}}_3}(\overline{\mathfrak{D}^{\cC}})$. Note that these point defects will not induce defects in the objects of $\widehat{\Bord^{\mathrm{def}}_3}(\overline{\mathfrak{D}^{\cC}})$, since they do not have boundaries and are entirely contained in the interior of a bordism. In \cite{CRS19OrbifoldsNdimensionalDefect} it is explained how to deduce a canonical set of labels for 0-strata having already specified the labels for higher-dimensional strata. In the case at hand, the 0-strata are labelled by multimodule maps between labels of the incident 1-strata.
    \end{rmk}

    There is a map of defect systems $\Theta : \overline{\mathfrak{D}^{\cC}} \to \mathfrak{D}^{\cC}$, which means a level-wise map of sets which intertwines the source maps. The map at level 3 is trivial. At level 2 we have:
    \begin{align*}
      \Theta_2 : \overline{\mathfrak{D}^{\cC}_2} &\to \mathfrak{D}^{\cC}_2\\
      A &\mapsto A\Mod_{\cC}.
    \end{align*}
    This map is not surjective, as its image is given by just the finitely semisimple $\cC$-module categories.

    To describe the map at level 1, we introduce some notation. Writing $\cM_i = A_i\Mod_{\cC}$, it follows from standard properties of the relative Deligne--Kelly tensor product that 
      \begin{align}
        \cM_{1} \bt_{\cC} \dots \bt_{\cC} \cM_{N} &\xrightarrow{\sim} A_{1} \ot \dots \ot A_{N}\Mod_{\cC}\label{eq-DK-prod}\\
        M_1 \bt \dots M_N &\mapsto M_1 \ot \dots M_N.\nonumber
      \end{align}

    \begin{notn}
      \label{n-combined}
      Write $\cM_{\mathbf{N}}$ for the relative tensor product of module categories on the left-hand side of \eqref{eq-DK-prod}. Write $A_{\mathbf{N}}$ for the algebra on the right-hand side, and $M_{\mathbf{N}}$ for the $A_{\mathbf{N}}$-module appearing there. Where the individual module categories are indexed $(\cM_i)_{i \in \mathbf{j}}$ for $\mathbf{j}$ the vector of line defects incident to a specific point defect in the standard ordering (Definition \ref{d-standard-ordering-surf}), we adopt the notation $\cM_{\mathbf{j}}, A_{\mathbf{j}}, M_{\mathbf{j}}$. Where the indexing vector $\mathbf{j}$ is understood it will be suppressed. 
    \end{notn}
    We note that an $A_{\mathbf{N}}$-module is equivalent to a multimodule over the component algebras. At level 1 we have:
    \begin{align*}
      \Theta_1 : \overline{\mathfrak{D}^{\cC}_1} &\to \mathfrak{D}^{\cC}_1\\
      ((A_i)_{i = 1}^N, X) &\mapsto ((A_i\Mod_{\cC})_{i = 1}^N, \emptyset, X \ot_{A_{\mathbf{N}}} - : {A_{\mathbf{N}}}\Mod_{\cC} \to \cC).
    \end{align*} 

    Recall the Eilenberg--Watts equivalence (see e.g.\ \cite[\S 5.2]{BJS21DualizabilityBraidedTensor}): 
    \begin{align*}
      \Fun_{\cC}(A\Mod_{\cC}, \cC) &\simeq {}_{\cC}\text{Mod-}A\\
      F &\mapsto F(A)\\
      X \otimes_A - &\mapsfrom X,
    \end{align*}
    which is an equivalence of left $\cC$-module categories. So we see that $\Theta_1$ is an isomorphism on equivalence classes onto the part of $\mathfrak{D}^{\cC}_1$ where the source module categories are in the image of $\Theta_2$ (and the target is the trivial module category). Said differently, if we assume the module categories $\cM_i$ labelling line defects are of the form $\cM_i \simeq {A_i}\Mod_{\cC}$, then the module functors $F_j$ labelling point defects are equivalent to multimodules $X_j$.

    \begin{rmk}
      In the course of marking a (in-)stratified surface, we first mark the 1-strata by module categories, and then mark the 0-strata by compatible functors (Definition \ref{d-D^c-marking}). The latter step involves interpreting each incident module category as $\cM_i$ or $\cM_i^{\op}$ depending on a sign. In the case an incoming module category is to be interpreted, via the sign of 1-stratum (Definition \ref{d-tangent-vectors}), as $\cM_i^{\op}$ for $\cM_i = A_i\Mod_{\cC} \in \overline{\mathfrak{D}_2^{\cC}}$, we note that $M \mapsto M^{\vee}$ gives an equivalence
      \[     
        (A_i\Mod_{\cC})^{\op} \simeq A_i^{\op}\Mod_{\cC}
      \]
      which is possible because $\cC$ itself is rigid and pivotal (since it is ribbon). In particular, even accounting for signs, the incident module categories at a 0-stratum are each given by modules over an algebra. Then when using Notation \ref{n-combined} at a point defect $P_j$, we assume that $A_{\mathbf{j}}$ refers to the tensor product of the incident algebras $A_i^{-\epsilon_i}$ where $A_i^1 = A_i, A^{-1}_i = A_i^{\op}$.      
      Similarly $M_{\mathbf{j}}$ is the tensor product of $M_i^{-\epsilon_i}$ where $M_i^1 = M, M^{-1}_i = M^{\vee}$.
    \end{rmk}

    \begin{defn}
      Let $(\Surface, \mathbf{L}, \mathbf{P})$ be an in-stratified surface. If $\Surface$ is equipped with a $\mathfrak{D}^{\cC}$-marking where moreover all of the defect data is in the image of $\Theta$, then we say that $\SSigma = (S, \mathbf{D})$ is a $\overline{\mathfrak{D}^{\cC}}$-marked surface.
    \end{defn}

    The category $\widehat{\Bord^{\mathrm{def}}_3}(\overline{\mathfrak{D}^{\cC}})$ of \cite{CRS18LineSurfaceDefects} is an extension of a category $\Bord^{\mathrm{def}}_3(\overline{\mathfrak{D}^{\cC}})$ given by equipping objects with a choice of Lagrangian subspace in first homology, and morphisms with integers, to absorb the gluing anomaly of the Reshetikhin--Turaev theory. Let us sketch the objects and morphisms of $\Bord^{\mathrm{def}}_3(\overline{\mathfrak{D}^{\cC}})$ as introduced in \cite{CRS19OrbifoldsNdimensionalDefect}, noting that the state spaces for any choice of Lagrangian are all isomorphic, so the extension data is for our purposes unimportant.

    \begin{defn}[{Sketch}]
      The category $\Bord^{\mathrm{def}}_3(\overline{\mathfrak{D}^{\cC}})$ has 
      \begin{itemize}
        \item as objects: regular stratified surfaces, where 0-strata are equipped not with a framing but with a so-called \emph{$\ast$-decoration}, which is a choice of incident 2-stratum in a neighbourhood of each 0-stratum.     
        The strata must be decorated by data from the system $\overline{\mathfrak{D}^{\cC}}$, and the 0-strata are moreover equipped with a sign.
        \item as morphisms: bordisms which are stratified manifolds where every point has a neighbourhood diffeomorphic to either:
        \begin{itemize}
          \item an open cylinder on a stratified surface $S$, with the orientation of each $n$-stratum $X \times (0, 1)$ induced from the orientation of $X$ together with the orientation of $(0, 1)$, or
          \item a cone on a stratified sphere, constructed by taking a half-open cylinder $S \times (0, 1]$ and taking the quotient by the relation $(s, 1) \sim (t, 1)$, and adding a point defect at the cone point.
        \end{itemize}
      \end{itemize}
    \end{defn}

    It is clear that objects of $\Bord^{\mathrm{def}}_3(\overline{\mathfrak{D}^{\cC}})$ are in correspondence with $\overline{\mathfrak{D}^{\cC}}$-marked surfaces. Given an object of $\Bord^{\mathrm{def}}_3(\overline{\mathfrak{D}^{\cC}})$, we replace the $\ast$-decoration with a positive framing where the first vector points into the indicated 2-stratum, and isotope the line defects to be all incoming with respect to the framing; given a $\overline{\mathfrak{D}^{\cC}}$-marked, we replace the framing at point defects with a $\ast$-decoration in the 2-stratum into which the first framing vector points.

    \begin{rmk}
      \label{rk-ast-dec}
      In \cite{CRS18LineSurfaceDefects} the set $\overline{\mathfrak{D}^{\cC}_1}$ is in fact given by \emph{cyclic} multimodules. These are $(A_1, \dots, A_n)$-multimodules, where the sequence $(A_1, \dots, A_n)$ has $C_{n/k}$-symmetry for some $k \mid n$, and the multimodule $X$ is then equipped with $C_{n/k}$-equivariance data. In \cite{CRS18LineSurfaceDefects}, a further choice must be made at each point defect indicating a copy of $A_1$ which is considered the first. 
      Therefore \cite{CRS18LineSurfaceDefects} chooses a total order on the incident strata out of $n/k$ possibilities, which are the only possibilities once the (totally ordered) multimodule $X$ is chosen (by labelling with an $(A_1, \dots, A_n)$-multimodule and not, say, an $(A_2, \dots, A_n, A_1)$-multimodule, we can only begin at a stratum labelled $A_1$). In our approach, we are not interested in $C_{n/k}$-equivariance, and we choose a total order at each point defect from the outset via the framing vector at each point. Since both definitions amount to a specification of total order at point defects, we identify them without further comment.
    \end{rmk}

    \subsection{Reshetikhin--Turaev with surface defects}
    \label{s-defects-in-RT}
    In \cite{CRS18LineSurfaceDefects}, a defect TQFT
    \[
      Z^{\mathrm{def}} : \widehat{\Bord^{\mathrm{def}}_3}(\overline{\mathfrak{D}^{\cC}}) \to \Vect
    \]
    is defined based on $Z^{\mathrm{RT}}$. Let us sketch the definition of this TQFT.

    Given a defect bordism $\SSigma_1 \xrightarrow{\M} \SSigma_2$, we can choose an oriented trivalent graph $s$ called a \emph{skeleton}, for the surface defects in $\M$. Examples of skeleta are obtained as the Poincar\'{e} dual to a triangulation, with orientation induced by a total order on vertices of the triangulation, but skeleta can be more general than this. Just as triangulations can be related by Pachner moves, any two skeleta can be related by a finite sequence of combinatorial moves. See \cite[Appendix A.1]{CMR+21OrbifoldGraphTQFTs} for more details on skeleta.
  
    The skeleton $s$ has its edges decorated with the algebra $A_i$ labelling corresponding surface defect, its trivalent vertices labelled by the (co)multiplication of $A_i$, and vertices which intersect a line defect decorated by the module action. The result is a ribbon bordism
    \[ 
    M_{s} : (\Surface_1, \{(\tau_1^i, A_i)\} \cup \{X_j\}) \to  (\Surface_2, \{(\tau_2^k, A_k)\} \cup \{X_l\})
    \] 
    in $\widehat{\Bord^{\mathrm{rib}}_3}(\cC)$, where $\tau_1^i, \tau_2^k$ are endpoints of the triangulations of the line defects in $\Surface_1, \Surface_2$ induced by $s$, and the $X_j, X_l$ are the point defects in the respective surfaces. We will denote these sets of point defects by $\mathbf{D}^{\tau_1} = \{(\tau_1^i, A_i)\} \cup \{X_j\}$ and so on. For another such bordism $M_{s'}$ obtained using a different choice of skeleton, and having the same triangulation on the boundary, we note that $Z^{\mathrm{RT}}(M_{s}) = Z^{\mathrm{RT}}(M_{s'})$ due to the properties of the algebra $A_i$ being a $\Delta$-separable symmetric Frobenius algebra.

    Given an object $\SSigma \in \widehat{\Bord^{\mathrm{def}}_3}(\overline{\mathfrak{D}^{\cC}})$, we let $\M = \SSigma \times [0, 1]$. Given two sets of triangulation data $\tau_1 = \{ \tau_1^i \}, \tau_2 = \{ \tau_2^i \}$ for the line defects in $\SSigma$, there always exists a skeleton $s$ inducing these triangulations, and hence a well-defined linear map
    \[
    P_{\tau_1, \tau_2} := Z^{\mathrm{RT}}(M_s) :  Z^{\mathrm{RT}}(\Surface, \mathbf{D}^{\tau_1}) \to  Z^{\mathrm{RT}}(\Surface, \mathbf{D}^{\tau_2}). 
    \]
    In fact, these maps are compatible in the sense that
    \[
      P_{\tau_1, \tau_3} = P_{\tau_2, \tau_3} \circ P_{\tau_1, \tau_2}
    \]
    for triangulations $\tau_1, \tau_2, \tau_3$. We then define
    \[
      Z^{\mathrm{def}}(\SSigma) = \colim_{\tau} Z^{\mathrm{RT}}(\Surface, \mathbf{D}^{\tau}).
    \]
    More explicitly, the above colimit can be computed by choosing a triangulation $\tau$ of 1-strata, and then calculating the image of $P_{\tau, \tau}$, which is an idempotent map $Z^{\mathrm{RT}}(\Surface, \mathbf{D}^{\tau}) \to Z^{\mathrm{RT}}(\Surface, \mathbf{D}^{\tau})$ projecting onto the subspace given by the above colimit. We write $\pi_{\tau} : Z^{\mathrm{RT}}(\Surface, \mathbf{D}^{\tau}) \to Z^{\mathrm{def}}(\SSigma)$ for the canonical projection, and $\iota_{\tau}$ for the section of $\pi_{\tau}$.
    
    The action of $Z^{\mathrm{def}}$ on bordisms is straightforward. Given a bordism $\SSigma_1 \xrightarrow{\M} \SSigma_2$, we choose a skeleton $s$ of surface defects in $\M$ inducing triangulations $\tau_1, \tau_2$ as above. We then define
    \[
        Z^{\mathrm{def}}(\M) := Z^{\mathrm{def}}(\SSigma_1) \xrightarrow{\iota_{\tau_1}} Z^{\mathrm{RT}}(\Surface_1, \mathbf{D}^{\tau_1}) \xrightarrow{Z^{\mathrm{RT}}(M_s)} Z^{\mathrm{RT}}(\Surface_2, \mathbf{D}^{\tau_2}) \xrightarrow{\pi_{\tau_2}} Z^{\mathrm{def}}(\SSigma_2).
    \]

        \begin{figure}
      \centering
      \begin{subfigure}{0.47\textwidth}
        \centering
       \includesvg[width=3cm]{skeleton.svg} 
      \end{subfigure}
      \begin{subfigure}{0.47\textwidth}
        \centering
        \includesvg[width=3cm]{skeleton-circle.svg}
      \end{subfigure}
      \caption{Left: A line defect which ends at point defects, when crossed with an interval, becomes a square. Here we show the point defects crossed with an interval as red lines, and denote the endpoint $\tau$ for the 2-segment triangulation of the line defect. In the interior is an example of a skeleton for the surface defect which is the line defect crossed with an interval. The case where the line defect ends at a single point defect can be obtained by identifying the red lines above. Right: a line defect which is a circle, when crossed with an interval, becomes a cylinder. Here we show a skeleton for the cylinder, ending at the endpoints of the 1-segment triangulation of the circle.}      
      \label{f-triangulation-example}
    \end{figure}

    \begin{eg}
      \label{eg-torus-projector}
      Consider the solid tori $\mathbb{K}_{\pm}$ of Example \ref{eg-marked-solid-tori}. Their boundaries are surfaces $K_{\pm}$ with defects. Assume that the label of the line defect is of the form $\cM = A\Mod_{\cC}$ and the label of the point defect is $X \otimes_{A^{\op} \ot A}  -$ for $X$ an $(A, A)$-bimodule. Then the line defect can be triangulated by $\tau$ made up of two line segments, meeting at a single point which we also denote $\tau$. The state space $Z^{\mathrm{RT}}(K_{\pm}, (\tau, A), X)$ was described in Example \ref{eg-solid-torus-RT}. The thickened line defect has a skeleton as shown in Figure \ref{f-triangulation-example}. The corresponding ribbon bordism is given in the top part of Figure \ref{f-solid-torus-RT}. So the linear map described in Example \ref{eg-solid-torus-RT} is the projector $P_{\tau}$. 
    \end{eg}

    Recall that the point defects in objects of $\widehat{\Bord^{\mathrm{def}}_3}(\overline{\mathfrak{D}^{\cC}})$ carry a sign, so that a point with positive sign becomes a line oriented in the direction of bordism composition on crossing with an interval, and oppositely oriented for a point with negative sign. Recall also that since $\cC$ is a ribbon category, it is pivotal. Write $p : (-) \implies (-)^{\vee \vee}$ for the pivotal structure.

    \begin{figure}
      \centering
      \includesvg[width=4cm]{X-flip.svg}
      \caption{The bordism of Lemma \ref{l-X-flip}, read bottom to top.
Here, $p_X : X \to X^{\vee\vee}$ denotes the pivotal structure.      
      }
      \label{f-X-flip}
    \end{figure}

    \begin{lemma}
      \label{l-X-flip}
      Let $\SSigma^{\pm} \in \widehat{\Bord^{\mathrm{def}}_3}(\overline{\mathfrak{D}^{\cC}})$ contain, among possibly further defects, a positively/negatively oriented point defect labelled by a right $A$-module $X$, 
      for $A$ the algebra obtained from the incident line defects as in Notation \ref{n-combined}. 
      The bordism of Figure \ref{f-X-flip} furnishes an isomorphism 
      \[
        Z^{\mathrm{def}}(\SSigma^{+}) \xrightarrow{\simeq} Z^{\mathrm{def}}(\SSigma^{-})
      \]
      where $\SSigma^{-}$ is the same object but with the corresponding point defect now negatively oriented and labelled $X^{\vee}$.
    \end{lemma}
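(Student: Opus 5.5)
The plan is to write down the evident candidate inverse bordism and show that the two composites are identity maps on the defect state spaces. Let $B^{+} : \SSigma^{+} \to \SSigma^{-}$ be the bordism of Figure \ref{f-X-flip}. It is the cylinder $\SSigma \times [0,1]$ in which the $X$-line, coming up from the bottom, bends over in a cap and leaves through the top as an $X^{\vee}$-line of the opposite orientation, with a coupon $p_X$ inserted. By the definition of $Z^{\mathrm{def}}$ on bordisms,
\[
Z^{\mathrm{def}}(B^{+}) = \pi_{\tau} \circ Z^{\RT}(M_s) \circ \iota_{\tau}
\]
for a skeleton $s$ that induces the same triangulation $\tau$ at both ends. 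Let $B^{-} : \SSigma^{-} \to \SSigma^{+}$ be the mirrored bordism, in which the $X^{\vee}$-line bends back into an $X$-line of the original orientation, with $p_X^{-1}$ (or its dual) inserted. Since $Z^{\mathrm{def}}$ is a functor, it suffices to show that $B^{-} \circ B^{+}$ and $B^{+} \circ B^{-}$ are each sent to the identity.

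First I would argue that gluing $B^{-}$ onto $B^{+}$ gives a defect bordism which, away from the point-defect line, is the identity cylinder $\SSigma \times [0,1]$. Along the point-defect line it carries an $X$-coloured line that takes a zig-zag, with the coupons $p_X$ and $p_X^{-1}$ on it. The key step is then to see this as an isotopy of defect bordisms, rel boundary, to the straight cylinder. I would apply the snake (zig-zag) identity in the graphical calculus of the pivotal category $\cC$, together with $p_X^{-1}\circ p_X = \id$. The point here is that the line is a \emph{line defect inside} the bordism, labelled by the right $A$-module $X$. Surface defects labelled by the algebras meet it and act on it through the multimodule structure. So the isotopy has to slide the attachment points of the $A$-skeleton edges along the zig-zag.

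This is where I would use the fact that the action maps of the dual module $X^{\vee}$ are defined as the mates of the action on $X$ under duality. Concretely, the right $A$-action on $X$, transported through the cap, becomes the corresponding $A^{\op}$-action on $X^{\vee}$. This is the same identification $(A\Mod_{\cC})^{\op} \simeq A^{\op}\Mod_{\cC}$ noted in the remark on signs. Together with well-definedness of $Z^{\RT}(M_s)$ under change of skeleton, guaranteed by the $\Delta$-separable symmetric Frobenius property, this shows that $Z^{\RT}(M_{s'})$ for the glued skeleton $s'$ equals the projector $P_{\tau,\tau}$. Compressing by $\pi_\tau, \iota_\tau$ then gives the identity on $Z^{\mathrm{def}}(\SSigma^{+})$. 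The composite in the other order is handled by the symmetric argument.

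I expect the main obstacle to be the careful bookkeeping of pivotal structures and framings in the zig-zag. One has to check that the coupon needed to make the orientation reversal consistent is exactly $p_X$, and not $p_X$ composed with a twist. Equivalently, the ribbon framing along the bent line must have no net twist, so that the two coupons cancel rather than leaving a $\theta_X$. I would settle this by computing in a local cube around the bend using the standard identification $X^{\vee\vee}\cong X$ via $p$ in a ribbon category, and by choosing the framing of the cap to be the blackboard framing.
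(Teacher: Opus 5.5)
Your proposal is correct and follows the paper's proof: the paper also takes the inverse to be the analogous bordism with $p_X^{-1}$ and concludes by functoriality of $Z^{\mathrm{def}}$, the composites reducing to the projector once $p_X^{-1}\circ p_X=\id_X$ cancels. The paper's one explicit check is the point you handle via mates: $X^{\vee}$ is given the transported action $p_X\circ\rho\circ(p_X^{-1}\otimes\id_A)$, so that $p_X^{\pm1}$ are module maps and $B^{\pm}$ are legitimate defect bordisms (your mate structure agrees with this by naturality and monoidality of $p$); the cap/snake and twist bookkeeping is unnecessary, since bending the line cannot change the sign of its endpoint and the orientation flip is done by the coupon $p_X$ itself.
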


    \begin{proof}
       For the bordism of Figure~\ref{f-X-flip}  to make sense, we need that $X^{\vee}$ is a right $A$-module, and that $p_X$ is an $A$-module morphism. Writing $\rho$ for the action of $A$ on $X$, we see that $p_X \circ \rho \circ (p_X^{-1} \ot \id_A)$ defines an $A$-action on $X^{\vee}$ which makes $p_X$ an $A$-module morphism. The bordism of Figure \ref{f-X-flip}  is therefore a morphism $\SSigma^{+} \to \SSigma^{-}$ in $\widehat{\Bord^{\mathrm{def}}_3}(\overline{\mathfrak{D}^{\cC}})$. This is clearly an isomorphism under $Z^{\mathrm{def}}$, with the inverse given by a similar bordism containing a point defect labelled by $p_X^{-1}$. 
    \end{proof}

     In our computation of $Z^{\mathrm{def}}(\SSigma)$, a triangulation $\tau$ of the line defects is chosen, where again the triangulation points $\tau^i$ have a sign corresponding to whether incident ribbons in the cylinder bordism are oriented with or against the bordism direction at $\tau^i$. We have the following.

     \begin{figure}
      \centering
      \begin{subfigure}{0.2\textwidth}
        \centering
        \includesvg[width=2cm]{A-flip-1.svg}
        \caption{}
        \label{f-A-flip-1}
      \end{subfigure}
      \begin{subfigure}{0.2\textwidth}
        \centering
        \includesvg[width=2cm]{A-flip-2.svg}
        \caption{}
        \label{f-A-flip-2}
      \end{subfigure}\\
      \vspace{0.5cm}
      \begin{subfigure}{\textwidth}
        \centering
        \includesvg[width=6cm]{A-flip-3.svg}
        \caption{}
        \label{f-A-flip-3}
      \end{subfigure}
      \caption{(A) The bordism of Lemma \ref{l-A-flip}. (B) The inverse bordism. (C) An illustration that the composition of these bordism in one order is the identity, using the Frobenius property of $A$. The composition in the other order is similar.}
      \label{f-A-flip}
    \end{figure}

    \begin{lemma}
      \label{l-A-flip}
      Let $\SSigma \in \widehat{\Bord^{\mathrm{def}}_3}(\overline{\mathfrak{D}^{\cC}})$ and let $\tau_{1}$ be a triangulation of the line defects of $\SSigma$ such that the triangulation point $\tau_1^i$ is negatively oriented and labelled by $A$. The bordism of Figure \ref{f-A-flip} furnishes an isomorphism 
      \[
        Z^{\mathrm{RT}}(\Surface, \mathbf{D}^{\tau_1}) \xrightarrow{\simeq} Z^{\mathrm{RT}}(\Surface, \mathbf{D}^{\tau_2})
      \]
      where $\tau_2$ is the same triangulation but with the corresponding triangulation point $\tau^i_2$ positively oriented.
    \end{lemma}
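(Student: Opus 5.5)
We mirror the proof of Lemma \ref{l-X-flip}: exhibit two ribbon bordisms, one in each direction, and check that their composites act as the identity on the relevant RT state spaces. Let $A$ be the algebra labelling the line defect on which $\tau_1^i$ sits. The bordism of Figure \ref{f-A-flip-1} is a cylinder $\Surface \times [0,1]$ with identity ribbons over all other marked points of $\mathbf{D}^{\tau_1}$. Near $\tau_1^i$, an $A$-ribbon enters oriented against the bordism direction, and a ribbon leaves oriented along it. We join these by a cup and a cap built from the Frobenius pairing $\epsilon\circ\mu : A\ot A\to\One$ and the copairing $\Delta\circ\eta:\One\to A\ot A$. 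This is exactly what identifies $A^{\vee}$ with $A$ via the Frobenius structure. The bordism of Figure \ref{f-A-flip-2} is the analogous cylinder in the reverse direction, using the pairing and copairing with the roles of the two ends swapped. Both are morphisms in $\widehat{\Bord^{\mathrm{rib}}_3}(\cC)$ between $(\Surface,\mathbf{D}^{\tau_1})$ and $(\Surface,\mathbf{D}^{\tau_2})$, so $Z^{\mathrm{RT}}$ sends them to linear maps between the two state spaces.

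Next we compute the composites. Gluing the two cylinders gives a cylinder whose only non-identity part is a ribbon graph localised in a ball around $\{\tau^i\}\times[0,1]$. By functoriality of $Z^{\mathrm{RT}}$ and the fact that the gluing anomaly vanishes for cylinders (the remark after \eqref{eq-RT-bordisms-via-skeins}), it suffices to show this local graph equals a straight $A$-ribbon under the graphical calculus of $\cC$. We can argue on the skein side via Lemma \ref{l-cylinder-surgery-commutes}, where a local interior skein relation replaces the graph by the straight ribbon.

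The identity we need is the snake (zig-zag) identity for the Frobenius pairing and copairing:
\[
  (\id_A\ot(\epsilon\mu))\circ((\Delta\eta)\ot\id_A)=\id_A .
\]
This follows from the Frobenius property together with the unit and counit axioms, as illustrated in Figure \ref{f-A-flip-3}; the other composite is the mirror-image identity. Because the orientation of the ribbon reverses, a half-twist or pivotal map $p_A$ may appear. We handle it as in Lemma \ref{l-X-flip}. If a twist $\theta_A$ is left over, the symmetry condition $\epsilon\circ\mu=\epsilon\circ\mu\circ\sigma_{A,A}\circ(\id_A\ot\theta_A)$ lets us absorb it, so that the two composites are strictly the identity.

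The main obstacle is this orientation and framing bookkeeping: the pairing must be chosen so that the cup and cap are well-framed ribbon vertices in the sense of Definition \ref{d-strat-rib-graph}, and any twist factor must cancel. I would fix the convention by using $\epsilon\circ\mu$ for the cap in Figure \ref{f-A-flip-1} and $\Delta\circ\eta$ for the cup in Figure \ref{f-A-flip-2}, and then verify the snake identity directly from the Frobenius relation. Once both composites are shown to be identities, the two maps are mutually inverse, which gives the claimed isomorphism $Z^{\mathrm{RT}}(\Surface,\mathbf{D}^{\tau_1})\cong Z^{\mathrm{RT}}(\Surface,\mathbf{D}^{\tau_2})$.
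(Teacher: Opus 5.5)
Your proposal is correct and follows the paper's argument: exhibit the reverse bordism, then check that both composites reduce to a straight $A$-ribbon via the Frobenius snake identities, which use only the Frobenius property together with the (co)unit axioms. One bookkeeping correction: at $\tau_1^i$ the ribbon is oriented toward the incoming boundary and at $\tau_2^i$ toward the outgoing boundary, so both ends in Figure~\ref{f-A-flip-1} point away from the interior vertex; that bordism must therefore carry the copairing $\Delta\circ\eta$ and the inverse bordism the pairing $\epsilon\circ\mu$, the opposite of the convention you fixed, and no twist or symmetry argument is needed.
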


    \begin{proof}
      The inverse bordism is shown in Figure \ref{f-A-flip}. That this is the inverse easily follows because the labelling algebra $A$ is a Frobenius algebra. For example, that the two bordisms composed in one order gives the identity is shown ion Figure \ref{f-A-flip}, the other composition is similar.
    \end{proof}

    We will finish this section with the value of $Z^{\mathrm{def}}$ on certain stratified spheres, for which we need the following lemma. 

    \begin{figure}
      \centering
      \begin{subfigure}{0.3\textwidth}
        \centering
        \includesvg[height=4cm]{rel-otimes-1.svg}
        \caption{}
        \label{f-rel-otimes-1}
      \end{subfigure}
      \begin{subfigure}{0.6\textwidth}
        \centering
        \includesvg[height=4cm]{rel-otimes-2.svg}
        \caption{}
        \label{f-rel-otimes-2}
      \end{subfigure}
      \caption{Graphical proof of lemma \ref{l-Hom-C-vs-A}. (A) This morphism in $\cC$ is the projection $M^{\vee} \ot N \to M^{\vee} \ot_A N$. (B) Left: morphisms from $M^{\vee} \ot_A N$ correspond to such morphisms, for $f$ any morphism in $\cC$ (note that if $f$ is already balanced with respect to the $A$-action, then the projector shown here can be removed). Right: morphisms $N \to M \ot V$ of $A$-modules correspond to such morphisms in $\cC$ (again, if $f$ is already an $A$-module map, the $A$-part of the diagram can be removed).}
      \label{f-rel-otimes}
    \end{figure}

    \begin{lemma}
      \label{l-Hom-C-vs-A}
      Let $A$ be a $\Delta$-separable symmetric Frobenius algebra in $\cC$, and $M, N \in A\Mod_{\cC}$. Then
      \[
        \Hom_{\cC}(M^{\vee} \ot_A N, V) \cong \Hom_A(N, M \ot V).
      \]
    \end{lemma}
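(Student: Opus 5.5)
We prove the lemma by factoring the isomorphism through the ordinary adjunction/duality isomorphisms in $\cC$. The relative tensor product is cut out by an idempotent, and the $A$-module condition is cut out by another, and the two idempotents will be shown to match. Here $M$ is a right $A$-module, so $M^{\vee}$ is a left $A$-module via the dual action. Since $A$ is $\Delta$-separable Frobenius, $M^{\vee} \ot_A N$ is the image of the idempotent
\[
  e : M^{\vee} \ot N \to M^{\vee} \ot N
\]
obtained by inserting $\Delta \circ \eta$ and letting the two copies of $A$ act on $M^{\vee}$ and on $N$ respectively, as in Figure~\ref{f-rel-otimes-1}. Idempotency follows from the Frobenius property together with $\mu \circ \Delta = \id_A$. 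Consequently
\[
  \Hom_{\cC}(M^{\vee} \ot_A N, V) \cong \{ f \in \Hom_{\cC}(M^{\vee} \ot N, V) : f \circ e = f \},
\]
and $f \mapsto f \circ e$ is a retraction onto this subspace.

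For the other side, consider $\Hom_{\cC}(N, M \ot V)$. It carries the averaging map
\[
  g \mapsto \rho_{M \ot V} \circ (g \ot \id_A) \circ \rho_N^{\ast},
\]
where $\rho_N^{\ast}$ is obtained from $\Delta$ and $\eta$ together with the action on $N$. Concretely, we split $A$ by $\Delta \circ \eta$, act with one copy on $N$, and feed the other copy to the action on $M$ on the output side, with $V$ untouched; this uses the right $A$-module structure on $M \ot V$ through $M$. Using the Frobenius property and $\Delta$-separability (Figure~\ref{f-rel-otimes-2}, right), this map is idempotent, and its image is exactly $\Hom_A(N, M \ot V)$: module maps are fixed by it, and the averaged map is a module map by associativity and the Frobenius relation.

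The standard duality isomorphism
\[
  \Hom_{\cC}(M^{\vee} \ot N, V) \cong \Hom_{\cC}(N, M \ot V)
\]
is given by composing with $\coev_M$ and $\ev_M$ (using the pivotal structure to identify $M^{\vee\vee}$ with $M$). The remaining step is to check that this bijection intertwines precomposition with $e$ and the averaging idempotent. Graphically, under the duality the $A$-strand acting on $M^{\vee}$ becomes an $A$-strand acting on $M$ after bending the $M$-line around. Matching the two pictures requires that the left action on $M^{\vee}$ is the transpose of the right action on $M$, which holds by definition. It also requires that the resulting $A$-loop orientation agrees, and this is where symmetry of the Frobenius algebra (and the twist) enters: it lets us rotate the copair $\Delta \circ \eta$ into the required position. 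Once the idempotents are intertwined, their images are isomorphic, which gives the statement.

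I expect the main obstacle to be this matching step. The orientation and pivotal bookkeeping is where one must use that $A$ is symmetric, so that the copairing is invariant under the rotation produced by bending the $M$-strand. Without symmetry the two idempotents would differ by a Nakayama-type twist. I would carry out this check purely graphically, mirroring Figure~\ref{f-rel-otimes}, rather than algebraically.
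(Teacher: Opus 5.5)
Your strategy is the paper's: $M^{\vee}\ot_A N$ is the image of the idempotent of Figure~\ref{f-rel-otimes}(a), $\Hom_A(N,M\ot V)$ is the image of the averaging idempotent, and the duality bijection $\Hom_{\cC}(M^{\vee}\ot N,V)\cong\Hom_{\cC}(N,M\ot V)$ intertwines the two. This is exactly what Figure~\ref{f-rel-otimes}(b) depicts. Two corrections are needed, however.

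First, the objects of $A\Mod_{\cC}$ are \emph{left} $A$-modules; this is what makes $M\ot V=M\lhd V$ a module through $M$ without any braiding. So $M^{\vee}$ is a \emph{right} $A$-module, with action determined by $\ev_M\circ(\rho_{M^{\vee}}\ot\id_M)=\ev_M\circ(\id_{M^{\vee}}\ot\lambda_M)$. With your right-module convention, $M^{\vee}\ot_A N$ is not defined as written. Moreover, both legs of your copairing would have to cross strands to reach the sides on which they are supposed to act.

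Second, with the correct convention, the matching step you single out as the main obstacle is just the identity
\[
(\id_M\ot\rho_{M^{\vee}})\circ(\mathrm{coev}_M\ot\id_A)=(\lambda_M\ot\id_{M^{\vee}})\circ(\id_A\ot\mathrm{coev}_M).
\]
Graphically, this slides the action vertex around the coevaluation cup, and it follows from the zigzag identities and the definition of the dual action. No $A$-loop is formed and the two legs of $\Delta\circ\eta$ are never exchanged. Consequently neither the symmetry of $A$ nor the twist (nor even the pivotal structure) is used, and no Nakayama-type discrepancy can arise. Only the Frobenius relations and $\mu\circ\Delta=\id_A$ enter, through the idempotency and image properties of the two idempotents.
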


    \begin{proof}
    Consider the map $M^{\vee} \ot N \to M^{\vee} \ot N$ of Figure \ref{f-rel-otimes} (a). It is easily checked that this is an idempotent morphism, and moreover that its image has the universal property of $M^{\vee} \ot_A N$. There is therefore a correspondence between $\Hom_A(M^{\vee} \ot_A N, V)$ and maps of the form shown on the left of Figure \ref{f-rel-otimes} (b), for $f \in \Hom_{\cC}(M^{\vee} \ot N, V)$. There is similarly a bijection between elements of $\Hom_A(N, M \ot V)$ and morphisms in $\cC$ of the form  shown on the right of Figure \ref{f-rel-otimes} (b). Therefore, the desired isomorphism is shown graphically in Figure \ref{f-rel-otimes} (b).
    \end{proof}

    \begin{eg}
      \label{eg-sphere}
      Consider the sphere with a point defect labelled $(N, -)$ at the north pole, and a point defect labelled $(M, -)$ at the south pole, connected by an $A$-labelled line defect oriented south to north, and suppose that there exists a disjoint point defect labelled by an object $(U, +)$ of $\cC$. Denote such a sphere by $S^{-}_{M, N; U}$. If the orientation of the $N$-defect is reversed, denote this sphere by $S^{+}_{M, N; U}$. 
    \end{eg}

    \begin{lemma}
      \label{l-strat-sphere-space}
      \begin{enumerate}
        \item \label{sphere-basic} There is an isomorphism
        \[
          Z^{\mathrm{def}}(S^{+}_{M, N; \One}) \cong \Hom_{A}(M, N).  
        \]
        \item \label{sphere-reversed} There is an isomorphism
        \[
          Z^{\mathrm{def}}(S^{-}_{M, N; U}) \cong \Hom_{\cC}(N \ot_{A} M, U).  
        \]
      \end{enumerate}
    \end{lemma}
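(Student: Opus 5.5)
We compute both state spaces with the projector description of Section~\ref{s-defects-in-RT}. The line defect of $S^{\pm}_{M,N;U}$ is an arc from the south to the north pole, so we triangulate it by a single segment. Its only interior triangulation point $\tau$ is labelled $A$ and has a chosen sign.

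First we compute the ordinary RT space. The sphere $\Surface = S^2$ carries the marked points $M$, $\tau$ (labelled $A$), $N$ and $U$, each with a sign. Using the handlebody description with $g=0$, the ball $B^3$, and the isomorphism $\alpha_\delta$, we get
\[
Z^{\RT}(S^2, \mathbf{D}^{\tau}) \cong \Hom_{\cC}(\One, M^{\pm}\ot A^{\pm} \ot N^{\pm} \ot U).
\]
Here the duals are dictated by the signs. By Lemma~\ref{l-A-flip} we may choose the sign of $\tau$ freely, and by Lemma~\ref{l-X-flip} we may trade a sign on $M$ or $N$ for a dual label. So in case (\ref{sphere-reversed}) we may arrange the space to be $\Hom_{\cC}(N\ot A \ot M, U)$, up to rotation of the sphere and rigidity. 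In case (\ref{sphere-basic}) we may arrange it to be $\Hom_{\cC}(M \ot A, N)$; equivalently, after Lemma~\ref{l-X-flip}, it is $\Hom_{\cC}(N^\vee \ot A \ot M,\One)$.

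Next we identify the projector $P_{\tau,\tau}$ as an explicit idempotent on this Hom space. The defect cylinder over the arc is a square, as on the left of Figure~\ref{f-triangulation-example}, and we choose the simplest skeleton for it. The resulting ribbon bordism in $S^2\times I$ contains:
\begin{itemize}
\item an $A$-ribbon from $\tau\times\{0\}$ to $\tau\times\{1\}$;
\item one trivalent (co)multiplication vertex, from which $A$-ribbons run to the $M$-line and the $N$-line;
\item module action vertices on the $M$-line and on the $N$-line.
\end{itemize}
Via \eqref{eq-RT-bordisms-via-skeins}, the projector acts on $f$ by precomposing with this graph inside the ball. Using the Frobenius and $\Delta$-separability relations, we rewrite it as follows. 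First, $f$ is precomposed with the idempotent of Figure~\ref{f-rel-otimes}(a), which projects $N\ot M$ onto $N\ot_A M$. Second, the remaining $A$-leg is contracted against the counit. The image of $P_{\tau,\tau}$ is then the set of morphisms $N\ot A\ot M \to U$ of the form $g\circ(\text{action})$ with $g$ balanced. Together with the universal property proved in Lemma~\ref{l-Hom-C-vs-A}, this gives $\Hom_{\cC}(N\ot_A M, U)$, which proves (\ref{sphere-reversed}).

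For (\ref{sphere-basic}), we apply Lemma~\ref{l-X-flip} at the north pole, which is a global isomorphism on $Z^{\mathrm{def}}$. This turns $S^+_{M,N;\One}$ into a sphere of the form $S^{-}$ with $N$ replaced by $N^\vee$ carrying the induced module structure. Part (\ref{sphere-reversed}) then gives $\Hom_{\cC}(N^\vee\ot_A M,\One)$ (with the left/right conventions for $A$ versus $A^{\op}$ adjusted). Lemma~\ref{l-Hom-C-vs-A} with $V=\One$ identifies this with $\Hom_A(M,N)$, up to exchanging the roles of $M$ and $N$ by duality; the pivotal structure makes $N^{\vee\vee}\cong N$ as $A$-modules.

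The main obstacle is the middle step. We must check that the projector bordism, once evaluated, is exactly the balancing idempotent of Figure~\ref{f-rel-otimes}(a), with the correct side of the $A$-action and the correct use of the symmetry of $A$ when the $A$-ribbon passes around the sphere. Sign and orientation bookkeeping for the $\pm$ point defects is delicate here. We would handle it by fixing all signs positive via Lemmas~\ref{l-X-flip} and \ref{l-A-flip} before evaluating, and then simplifying with the Frobenius relation and $\mu\circ\Delta=\id_A$.
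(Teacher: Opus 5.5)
Your proposal is correct in outline, but it runs in the opposite direction from the paper.

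\textbf{The paper's route.} The paper computes no projector itself. Part (1) is quoted from \cite[Lem.\,5.10]{CRS18LineSurfaceDefects}. Part (2) is obtained in three steps: Lemma~\ref{l-X-flip} passes from $S^{-}_{M,N;U}$ to $S^{+}_{M,N^{\vee};U}$; the CRS argument is rerun with the extra $U$-point to get $\Hom_A(M,N^{\vee}\ot U)$; and Lemma~\ref{l-Hom-C-vs-A} converts this to $\Hom_{\cC}(N\ot_A M,U)$.

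\textbf{Your route.} You evaluate the projector for (2) directly. You then obtain (1) as a corollary via Lemma~\ref{l-X-flip} and Lemma~\ref{l-Hom-C-vs-A} with $V=\One$. This yields $\Hom_A(M,N)$ directly, so no further exchange of $M$ and $N$ is needed.

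\textbf{Comparison.} The paper's route gains brevity by reusing a published computation, at the price of a ``mutatis mutandis'' extension that is not spelled out. Yours is self-contained and arguably more natural. The projector on $\Hom_{\cC}(N\ot A\ot M,U)$ is just precomposition with an idempotent whose image is $N\ot_A A\ot_A M\cong N\ot_A M$. So the relative tensor product appears without a detour through $\Hom_A$.

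\textbf{Points to tighten when you write it out.}
First, a skeleton whose legs reach $\tau\times\{0\}$, $\tau\times\{1\}$ and both point-defect lines needs two trivalent vertices, not one.
Second, the projector is not best described as a counit contraction. Up to orientation conventions, it is $f\mapsto f\circ e$ with
\[
e=(\rho_N\ot\id_A\ot\lambda_M)\circ\bigl(\id_N\ot((\Delta\ot\id_A)\circ\Delta)\ot\id_M\bigr),
\]
where $\rho_N$ and $\lambda_M$ are the right and left $A$-actions on $N$ and $M$. This $e$ is idempotent by coassociativity and $\mu\circ\Delta=\id_A$. Your description of the image, as the maps $g\circ(\rho_N\ot\id_M)$ with $g$ balanced, then follows because $\Delta$ is an $A$-bimodule map.
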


    \begin{proof}
      Statement \ref{sphere-basic} is \cite[Lem.\,5.10]{CRS18LineSurfaceDefects}. For statement 2, applying Lemma \ref{l-X-flip} gives $Z^{\mathrm{def}}(S^{-}_{M, N; U}) \cong  Z^{\mathrm{def}}(S^{+}_{M, N^{\vee}; U})$. Then the same argument as in \cite[Lem.\,5.10]{CRS18LineSurfaceDefects} applies mutatis mutandis to show that $ Z^{\mathrm{def}}(S^{+}_{M, N^{\vee}; U}) \cong \Hom_A(M, N^{\vee} \ot U)$. Finally, Lemma \ref{l-Hom-C-vs-A} shows that $\Hom_A(M, N^{\vee} \ot U) \cong \Hom_{\cC}(N \ot_A M, U)$, which concludes the proof od statement  \ref{sphere-reversed}. 
    \end{proof}

    \section{The state-skein correspondence with defects}
    \label{s-main-thm}

    Let $\SSigma \in \widehat{\Bord^{\mathrm{def}}_3}(\overline{\mathfrak{D}^{\cC}})$, and $M$ a 3-manifold with a diffeomorphism $\delta : \Surface \xrightarrow{\sim} \partial M$. Then $\delta$ induces on $M$ the structure of a boundary-stratified $\overline{\mathfrak{D}^{\cC}}$-marked 3-manifold which we denote $\mathbb{M}$. In this section we define linear maps
    \[
      \Sk(\mathbb{M}) \leftrightarrows Z^{\mathrm{def}}(\SSigma)
    \]
    and show they are isomorphisms.

    \subsection{Definition of linear maps}

    \begin{conv}
      \label{conv-X-signs}
      We will assume all of the point defects in $\SSigma$ are negatively oriented. This simplifies the exposition 
      and
      the general case can be recovered using Lemma \ref{l-X-flip}.
    \end{conv}

    The map $\Phi : \Sk(\mathbb{M}) \to Z^{\mathrm{def}}(\SSigma)$ is defined as follows. 
    Consider the defect cylinder $\SSigma \times I$,
    where the line defects $\{L_i\}$ of $\SSigma$ become surface defects $\{L_i \times I\}$, and the point defects $\{P_j\}$ become line defects $\{ P_j \times I \}$ connecting surfaces,
    and by Convention \ref{conv-X-signs} these line defects are oriented against the interval direction (i.e. downward in a bottom-to-top bordism picture).
    
    Let $\Gamma$ be a ribbon graph in $\M$. We can attach $\SSigma \times I$ along $\SSigma \times \{ 0\}$ to $(\M, \Gamma)$. A surface defect $L_i \times I$ in $\SSigma \times I$ is labelled by a $\Delta$-separable symmetric Frobenius algebra $A_i$, and meets $\Gamma$ along a line labelled by left $A_i$-modules (and $A_i$-module maps). Such module-labelled lines (supporting module-map-labelled points) are appropriate ways to label the boundary of a surface defect in the defect system $\overline{\mathfrak{D}^{\cC}}$. A line defect $P_j \times I$ in $\SSigma \times I$ is labelled by a multimodule $X_j$, and meets $\Gamma$ at a point defect labelled by
    \[
      f_j : X_j \ot_{A_{\mathbf{j}}} N_{\mathbf{j}} \ot V_j \to \One_{\cC}  \ot  W_j
    \]
    in $\cC$, where we use Notation \ref{n-combined}. This is an appropriate label for a point defect between the line defects $\{N_k\}, X_j, V_j, W_j$ in the defect system $\overline{\mathfrak{D}^{\cC}}$. Therefore, the cylinder $\SSigma \times I$ attached to $(\M, \Gamma)$ along $\SSigma \times \{ 0 \}$ is a well-defined defect bordism 
    \[
      \Cylinder{\Gamma} : \emptyset \to \SSigma \in \mathrm{mor}\big( \widehat{\Bord^{\mathrm{def}}_3}(\overline{\mathfrak{D}^{\cC}})\big).
    \]
    See Figure \ref{f-phi-def}. We obtain an element of the state space $Z^{\mathrm{def}}(\SSigma)$ by setting 
    \begin{equation}
      \label{eq-Phi}
      \Phi(\Gamma) := Z^{\mathrm{def}}(\SSigma \xleftarrow{\Cylinder{\Gamma}} \emptyset)(1).
    \end{equation}

    \begin{prop}
      \label{p-Phi-well-def}
      Extending the assignment (\ref{eq-Phi}) to linear combinations of graphs gives a well-defined linear map \[
        \Phi : \Sk(\mathbb{M}) \to Z^{\mathrm{def}}(\SSigma).
      \]
    \end{prop}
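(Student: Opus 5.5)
Two things have to be checked: that $\Phi$ is invariant under isotopy of ribbon graphs, and that it respects every skein relation of Definition~\ref{d-ord-skein-rels}. Linearity is then automatic, since $\Phi$ is defined on the free vector space on isotopy classes and extended linearly.

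For isotopy invariance: an isotopy of $\Gamma$ in $\M$ fixes $0$-strata and preserves $1$-strata setwise. It therefore extends, by the identity on the collar $\SSigma\times I$ away from a neighbourhood of $\SSigma\times\{0\}$ and by a compatible isotopy near it, to an isomorphism of defect bordisms $\Cylinder{\Gamma}\cong\Cylinder{\Gamma'}$ relative to $\SSigma\times\{1\}$. Vertices sliding along a line defect $L_i$ simply slide along the boundary circle of the surface defect $L_i\times I$. Since $Z^{\mathrm{def}}$ is a functor on $\widehat{\Bord^{\mathrm{def}}_3}(\overline{\mathfrak{D}^{\cC}})$, which only sees diffeomorphism classes rel boundary, the values agree.

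For the skein relations, fix a valid cube $\mathbf{C}$ and graphs $\Gamma_i,\Gamma'_j$ agreeing outside $\mathbf{C}$ with $\sum\lambda_i\Ev_{\mathbf{C}}(\Gamma_i)=\sum\mu_j\Ev_{\mathbf{C}}(\Gamma'_j)$. I would use locality of $Z^{\mathrm{def}}$: enlarge $\mathbf{C}$ to a $3$-ball $B$ containing $\mathbf{C}$ together with the adjacent piece $(\mathbf{C}\cap\partial M)\times I$ of the collar. Then $\Cylinder{\Gamma_i}$ decomposes as a common outer bordism composed with a bordism $B_i:\emptyset\to\partial B$, where $\partial B$ is a stratified sphere. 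Functoriality gives
\[
\Phi(\Gamma_i)=Z^{\mathrm{def}}(\text{outer part})\bigl(Z^{\mathrm{def}}(B_i)(1)\bigr),
\]
so it suffices to show $\sum\lambda_i Z^{\mathrm{def}}(B_i)(1)=\sum\mu_j Z^{\mathrm{def}}(B'_j)(1)$ in $Z^{\mathrm{def}}(\partial B)$.

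There are three cases, according to whether $\mathbf{C}$ is interior, meets a line defect, or meets a point defect. In each case $\partial B$ is a sphere of the type in Example~\ref{eg-sphere}: one or two point defects joined by line defects, together with $\cC$-marked points coming from the ribbons crossing $\partial\mathbf{C}$. Lemma~\ref{l-strat-sphere-space}, combined with Lemmas~\ref{l-X-flip}, \ref{l-A-flip} and \ref{l-Hom-C-vs-A}, identifies $Z^{\mathrm{def}}(\partial B)$ with the $\Hom$-space $\Hom_{\cN}(F(X)\lhd U,Y\lhd U')$ in which $\Ev_{\mathbf{C}}$ takes values. Here I use Eilenberg--Watts to write $F=X_j\ot_{A_{\mathbf{j}}}-$, and in the interior case the sphere is just an RT sphere. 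I then check that under this identification $Z^{\mathrm{def}}(B_i)(1)$ corresponds to $\Ev_{\mathbf{C}}(\Gamma_i)$, so the relation is transported.

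This last check is the main obstacle. I would do it by choosing a skeleton of $(\mathbf{C}\cap\partial M)\times I$ that is adapted to the generic-position slicing used to define $\Ev_{\mathbf{C}}$. The resulting ribbon bordism in the ball is then evaluated by Reshetikhin--Turaev's functor. After that, one has to verify that the $A$-action vertices, the $\Delta$-separable Frobenius projectors and the multimodule $X_j$ reproduce exactly the modulators $\mu$ and the coherence $\rho$ inserted in the definition of $\tilde f_i$ and $\Ev_{\mathbf{C}}$, including the duality and sign conventions for $\cM^{\op}$. I would first verify this on the generators: a single module coupon, a single coupon at the point defect, and interior tangles. I would then argue that both sides are compatible with composition along the $y$-direction, using Lemma~\ref{l-ev well-defd} on the skein side and functoriality on the TQFT side.
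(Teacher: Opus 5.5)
Your proposal follows the same route as the paper. You enlarge the valid cube $\mathbf{C}$ into the collar of $\SSigma \times I$, taking a sub-collar such as $(\mathbf{C}\cap\partial M)\times[0,\tfrac12]$ so the ball stays away from $\SSigma\times\{1\}$; this gives a defect ball bounded by a sphere of the type in Example~\ref{eg-sphere}, whose state space Lemma~\ref{l-strat-sphere-space} identifies with the relevant $\Hom$-space, and you conclude by gluing. The paper differs only in presentation: it first zips the incoming $\cC$-ribbons so the sphere is literally $S^{-}_{N,X;U}$ (resp.\ $S^{+}_{N_1,N_2;\One}$), leaves isotopy invariance implicit, and asserts without proof the compatibility of $Z^{\mathrm{def}}(B_{\Gamma})(1)$ with $\Ev_{\mathbf{C}}(\Gamma)$, which you rightly single out and propose to check via an adapted skeleton.
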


    \begin{proof}
      Given linear combinations $\Gamma = \sum_i  \lambda_i \Gamma_i$ and $\Gamma' = \sum_j  \mu_j \Gamma'_j$ of isotopy classes of graphs, with $\Gamma \sim \Gamma'$, we want to check that $\Phi(\Gamma) = \Phi(\Gamma')$. It suffices to prove that $\Phi(\Gamma) = \Phi(\Gamma')$ whenever there exists a valid cube $\mathbf{C}$ such that $\sum_i \lambda_i \Ev_{\mathbf{C}}(\Gamma) = \sum_j \mu_j \Ev_{\mathbf{C}}(\Gamma')$.
      
      Suppose that $\mathbf{C}$ intersects a nontrivial point defect marked by $X \ot_A - : A\Mod_{\cC} \to \cC$. Then $\Gamma \cap \mathbf{C}$ and  $\Gamma' \cap \mathbf{C}$ describe identical morphisms in 
      \[
        \Hom_{\cC}(X \ot_A N \ot V, \One \ot U) \cong \Hom_{\cC}(X \ot_A N \ot V, U)
      \]
      where $N$ is the label of the defect ribbon at the $y = -1/2$ face of $\mathbf{C}$, and $V$ the product of the objects of $\cC$ at that face; $U$ is the product of the objects of $\cC$ at the $y = 1/2$ face; $A$ is the algebra labelling the line defect meeting $y = -1/2$, and $X$ is the bimodule labelling the point defect. Here we have applied the simplification of Notation \ref{n-combined}.
      
      \begin{figure}
        \centering
        \includesvg[width=10cm]{cubes.svg}
        \caption{Left: In the proof of Proposition \ref{p-Phi-well-def}, a valid cube at a point defect (red) is enlarged to intersect a surface defect in $\SSigma \times I$. The original valid cube is indicated, with the $x=0$ face shaded. Right: this cube is homeomorphic to the defect ball shown, whose boundary is the sphere $S^{-}_{N, X; U}$ of Example \ref{eg-sphere}. 
        }
        \label{f-cube-collared}
      \end{figure}

      Note that by a zipping relation near the $y = - 1/2$ face, we can absorb $V$ into the line defect, so that up to re-labelling, the relation between $\Gamma$ and $\Gamma'$ is an identity in the space
      \[
        \Hom_{\cC}(X \ot_A N, U).
      \]
      Now consider a collar neighbourhood of $\partial M$ inside $\M \cup_{\SSigma \times \{ 0 \}} \SSigma \times I$ which intersects $\SSigma \times I$. Extending the cube $\mathbf{C}$ in the direction of this collaring, we obtain an embedded cube in $\M \cup_{\SSigma \times \{ 0 \}} \SSigma \times I$, see Figure \ref{f-cube-collared}. The boundary of this cube is homeomorphic to a sphere $S^{-}_{N, X; U}$ of the form of Example \ref{eg-sphere}. For all $i, j$, we have that $\Gamma_i$ and $\Gamma_j'$ define defect balls $B_{\Gamma_i}, B_{\Gamma_j'} : \emptyset \to S^{-}_{N, X; U}$. By Lemma \ref{l-strat-sphere-space}(\ref{sphere-reversed}), we have $Z^{\mathrm{def}}(S^{-}_{N, X; U}) = \Hom_{\cC}(X \ot_A N, U)$. It then follows that
      \[
        \sum_i \lambda_i Z^{\mathrm{def}}( S^{-}_{N, X; U} \xleftarrow{B_{\Gamma_i}} \emptyset) =  \sum_j \mu_j Z^{\mathrm{def}}( S^{-}_{N, X; U} \xleftarrow{B_{\Gamma_j'}} \emptyset)
      \]
      since $\Gamma$ and $\Gamma'$ describe identical morphisms in this $\Hom$-space.

      The bordisms $\Cylinder{\Gamma_i}$ and $\Cylinder{\Gamma_j'}$ agree outside of $S^{-}_{N, X; U}$ for all $i, j$. Since we can compute $Z^{\mathrm{def}}(\Cylinder{\Gamma_i})$ as the composite of  $Z^{\mathrm{def}}(S^{-}_{N, X; U} \xleftarrow{B_{\Gamma_i}} \emptyset)$ with $Z^{\mathrm{def}}$ evaluated on the complement of this ball, it follows from the above that
      \[
        \sum_i \lambda_i Z^{\mathrm{def}}(\SSigma \xleftarrow{\Cylinder{\Gamma_i}} \emptyset) = \sum_{j} \mu_j Z^{\mathrm{def}}(\SSigma \xleftarrow{\Cylinder{\Gamma'_i}} \emptyset)
      \]
      and hence $\Phi(\Gamma) = \Phi(\Gamma')$.

      In the case that $\mathbf{C}$ does not intersect a nontrivial point defect, then we can assume it intersects a point defect labelled $\Id_{A\Mod_{\cC}}$ (possibly with $A = \One_{\cC}$). A similar argument to the above shows that $\Phi(\Gamma) = \Phi(\Gamma')$, except that now the enlargement of $\mathbf{C}$ is the sphere $S^{+}_{N_1, N_2; \One}$. Therefore $\Phi$ is well-defined with respect to the skein relations holding in any valid cube.
    \end{proof}

    We now define the inverse to $\Phi$. 
    We will first define a map     
    \[
    \Psi :  Z^{\mathrm{RT}}(\Surface, \mathbf{D}^{\tau}) \to \Sk(\mathbb{M}) ,
    \]
whose domain is the state space where line defects have been replaced with their triangulation $\tau$, 
    using the notation of \S \ref{s-defects-in-RT}. There is a projector $P_{\tau}$ on $Z^{\RT}(\Surface, \mathbf{D}^{\tau})$ and maps 
    \[
      Z^{\RT}(\Surface, \mathbf{D}^{\tau}) \xtwoheadrightarrow{\pi_{\tau}} \Img P_{\tau} \cong Z^{\mathrm{def}}(\SSigma) \xhookrightarrow{\iota_{\tau}} Z^{\mathrm{RT}}(\Surface, \mathbf{D}^{\tau}). 
    \]
    The inverse to $\Phi$ will then be $\Psi \circ \iota_{\tau}$.

    \begin{conv}
      \label{conv-A-signs}
      By Lemma \ref{l-A-flip}, we can assume $\tau$ has only positively signed points. Let us fix such a $\tau$ with only one triangulation point in each line defect. Henceforth we suppress $\tau$ in the notation.
    \end{conv}
    
    We consider $\Sk(M, \mathbf{D})$, i.e. the skein module of the underlying manifold $M$ with skeins ending at the triangulation points (oriented outwards, as in Convention \ref{conv-A-signs}) or at the original point defects (oriented inwards, as in Convention \ref{conv-X-signs}). As in Notation \ref{n-RT-sk-isos}, we have an isomorphism $\psi_{\delta} : Z^{\mathrm{RT}}(\Surface, \mathbf{D}) \xrightarrow{\sim} \Sk(M, \mathbf{D})$. 

    We define $\Psi$ to take an element $v \in Z^{\mathrm{RT}}(\Surface, \mathbf{D})$ to the defect skein in $\mathbb{M}$ given by connecting any representative of $\psi_{\delta}(v) \in \Sk(M, \mathbf{D})$ at the ribbon labelled $A_i$ to the defect $L_i$ with an $A_i$-label, where the connection is by the multiplication map for $A_i$ at the triangulation point; and at the ribbon labelled $X_i$ to the point defect, where the point defect is given the label $u_{X_i}$ for $u$ the inverse of the left unitor of $\cC$. This is manifestly well-defined since two representatives for $\psi_{\delta}(v)$ can only differ by skein relations in the interior of $M$, and these are a subset of the skein relations in $\Sk(\M)$.

     \begin{figure}
      \centering
      \begin{subfigure}{0.3\textwidth}
        \centering
        \includesvg[width=2cm]{separable-retract-1.svg}
        \caption{$r_N$}
        \label{f-separable-retract-1}
      \end{subfigure}
      \begin{subfigure}{0.3\textwidth}
        \centering
        \includesvg[width=2cm]{separable-retract-2.svg}
        \caption{$i_N$}
        \label{f-separable-retract-2}
      \end{subfigure}
      \begin{subfigure}{0.3\textwidth}
        \centering
        \includesvg[width=3.5cm]{separable-retract-3.svg}
        \caption{$r_N i_N = \id_N$}
        \label{f-separable-retract-3}
      \end{subfigure}
      \caption{The morphisms exhibiting any $A$-module as a retract of a free module. The empty circle denotes the unit of $A$.}
      \label{f-separable-retract}
    \end{figure}

    \subsection{Proof of the isomorphism}

     We divide the main theorem into several lemmas, beginning with the following simple observation.

    \begin{lemma}
      \label{l-separable-retract}
      For $A$ a separable Frobenius algebra,    
      then any module $N$ is a retract of a free module.
    \end{lemma}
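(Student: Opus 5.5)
The plan is to exhibit $N$ explicitly as a retract of the free module $N \ot A$ (or $A \ot N$, according to the side of the action; the paper's $A\Mod_{\cC}$ consists of left modules in the convention used for $\Theta_2$, but the argument is symmetric). The free module $A \ot N$ carries the $A$-action by multiplication on the first factor. I will use two morphisms, matching Figure~\ref{f-separable-retract}. The first is $r_N : A \ot N \to N$, the action map $\lambda_N$. The second is $i_N : N \to A \ot N$, given by
\[
  i_N = (\id_A \ot \lambda_N) \circ (\alpha) \circ (\Delta \ot \id_N) \circ (\eta \ot \id_N) \circ \lambda^{-1}_N ,
\]
where $\lambda^{-1}_N$ here denotes the inverse left unitor $N \to \One \ot N$. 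In words: create a unit, comultiply it to $A \ot A$, and act with the second copy on $N$.

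The key steps are three, carried out in order.

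\begin{enumerate}
\item Check that $r_N$ is an $A$-module map. This is exactly associativity of the action.
\item Check that $i_N$ is an $A$-module map. Writing $\Delta(\eta)$ graphically, the Frobenius property gives
\[
  (\mu \ot \id_A) \circ (\id_A \ot \Delta) = \Delta \circ \mu = (\id_A \ot \mu) \circ (\Delta \ot \id_A).
\]
Applied to $\id_A \ot \eta$, this says that multiplying on the left of the first leg of $\Delta\circ\eta$ equals multiplying on the right of the second leg. Combined with associativity of the action on $N$, this moves an incoming $A$ from acting on $N$ to multiplying onto the first tensor factor. Hence $i_N \circ \lambda_N = (\mu \ot \id_N)\circ(\id_A \ot i_N)$, up to associators.
\item Compute $r_N \circ i_N$. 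By associativity of the action, this equals $\lambda_N \circ ((\mu\circ\Delta\circ\eta) \ot \id_N)$ composed with unitors. Then $\Delta$-separability $\mu\circ\Delta = \id_A$ reduces it to $\lambda_N \circ (\eta \ot \id_N)\circ \lambda_N^{-1} = \id_N$ by the unit axiom. This is Figure~\ref{f-separable-retract}(c).
\end{enumerate}

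The only step with any content is step 2, the module-map property of $i_N$. The first thing I would try is the graphical Frobenius manipulation above, checking it against the conventions for which side $A$ acts on. No symmetry, braiding, or semisimplicity is needed: the argument works in any monoidal category with a $\Delta$-separable Frobenius algebra.

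If one only needs a retraction in $\cC$ rather than in $A\Mod_{\cC}$, step 2 could be skipped. However, the later use (replacing defect labels by free modules) presumably needs module maps, so I will include it.
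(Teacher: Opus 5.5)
Your proof is correct and is essentially the paper's argument. The paper also takes $r_N$ to be the action map and $i_N$ to be the unit--comultiply--act map of Figure~\ref{f-separable-retract}, and checks $r_N i_N = \mathrm{id}_N$ by $\Delta$-separability, as in your step 3. Your step 2, verifying that $i_N$ is an $A$-module map, is left implicit in the paper; it is needed when these coupons are placed on line defects in Lemma~\ref{l-psi-surjective-0-simple}. There, to get $(\mu\otimes\mathrm{id}_A)(\mathrm{id}_A\otimes\Delta\eta) = (\mathrm{id}_A\otimes\mu)(\Delta\eta\otimes\mathrm{id}_A)$, precompose the Frobenius identity with both $\mathrm{id}_A\otimes\eta$ and $\eta\otimes\mathrm{id}_A$; each side then reduces to $\Delta$.
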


    \begin{proof}
      Let  $r_N : A \ot N \to N$ be the action map. Let $i_N : N \to A  \ot N$ be given by the diagram of Figure \ref{f-separable-retract}. Then it is clear to see that $r_N i_N = \id_N$ using separability of $A$.
    \end{proof}

    \begin{lemma}
      \label{l-psi-surjective-0-simple}
      The map $\Psi$ is surjective.
    \end{lemma}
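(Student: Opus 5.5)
To show $\Psi$ is surjective, I will put an arbitrary defect skein $[\Gamma] \in \Sk(\M)$ into a normal form that visibly lies in the image of $\Psi$. That normal form is a skein whose only contact with each line defect $L_i$ is a single $A_i$-labelled defect ribbon, joined at the triangulation point by the multiplication of $A_i$ to an interior ribbon labelled $A_i$. In addition, each point defect $P_j$ should carry the label $u_{X_j}$ with an interior $X_j$-ribbon leaving it. Any such skein is, by construction, $\Psi$ applied to $\phi_\delta$ of its interior part, viewed as an element of $\Sk(M,\mathbf{D})$. Here we use that $\psi_\delta$ is an isomorphism, by the Corollary preceding Notation~\ref{n-RT-sk-isos}. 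By linearity it suffices to treat a single coloured graph $\Gamma$.

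\textbf{Step 1.} Simplify along each line defect $L_i$ with $\cM_i = A_i\Mod_{\cC}$. I use the $\cM_i$-skein relations and the zipping relations to collect all boundary coupons on $L_i$ into a single coupon near each incident point defect. Then I use subduction, as in the analysis of the examples $\mathbb{K}_\pm$, to push these coupons through the point defect into the interior. After this, each segment of $L_i$ carries a single module $N_i$, with no coupons except at its endpoints.

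\textbf{Step 2.} Make each $N_i$ free. By Lemma~\ref{l-separable-retract}, $\id_{N_i} = r_{N_i} i_{N_i}$, and I insert this factorisation as two boundary coupons on the segment. I then slide $i_{N_i}$ to the point defect end and absorb it there by a zipping relation. On the part of the segment carrying $A_i \ot N_i$, I use the $\cM_i$-relations and zipping to detach the $N_i$ tensor factor as an interior ribbon parallel to the defect. What remains is the free module $A_i$ on the defect, where the action $r_{N_i}$ becomes $A_i$-multiplication with an interior $N_i$-strand. Adjusting by isotopy along $L_i$, the only nontrivial boundary coupon on $L_i$ is then a multiplication $\mu_{A_i}$ at the triangulation point, fed by an interior $A_i$-ribbon. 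Here I use that $A_i \lhd V \cong A_i\ot V$, so coupons on a free module are just $\cC$-morphisms tensored along the defect.

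\textbf{Step 3.} Normalise the point defects. At each $P_j$ the label is now a morphism $X_j \ot_{A_{\mathbf{j}}} A_{\mathbf{j}}\ot V \to \One\ot W$. By Lemma~\ref{l-unitors-everywhere} I replace it with the unitor $u$, pushing the remaining data into an interior coupon. Since $X_j\ot_{A}A \cong X_j$, the point-defect label becomes $u_{X_j}$ with an outgoing interior $X_j$-ribbon. This matches the Convention~\ref{conv-X-signs} orientation.

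\textbf{Main obstacle.} I expect Step 2 to be the delicate part, specifically the bookkeeping at the point defect ends of each segment of $L_i$. The module there is a tensor product over several incident defects, and I need to check that the free-module replacement is compatible with the relative tensor product via \eqref{eq-DK-prod}. It should be, since $A_{\mathbf{j}}$ is the tensor product of the free pieces. I also need to handle circle components of $L_i$ with no point defect. For those I insert $r_{N_i} i_{N_i}$ once and zip all the way around the circle, so the defect carries $A_i$ with a single multiplication vertex at the triangulation point.
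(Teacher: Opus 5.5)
Your overall strategy (retracting onto free modules via Lemma~\ref{l-separable-retract}, subducting coupons through point defects, and normalising point-defect labels with Lemma~\ref{l-unitors-everywhere}) is the paper's, but two steps do not work as written.

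\textbf{The segment case.} In Step~2 you slide only $i_N$ to an endpoint. The coupon $r_N : A \ot N \to N$ then stays on the segment, and everything beyond it is still labelled by $N$. After you unzip the $A \ot N$ part, $r_N$ is a vertex with defect input $A$, interior input $N$ and defect output $N$. That is the action on $N$, not a multiplication $A \lhd A \to A$, so you have not reached the image of $\Psi$. The repair is what the paper does: move $i_N$ and $r_N$ to \emph{opposite} ends of the segment and subduct both. Then the whole segment carries $A \ot N = A \lhd N$, which unzips to a coupon-free $A$-labelled segment. Such a segment lies in the image of $\Psi$, for instance after inserting $\id_A = \mu \circ (\id_A \ot \eta)$ with an interior unit strand.

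\textbf{The circle case.} Here an idea is genuinely missing. With no point defect to subduct into, Step~1 leaves a coupon $f : N \lhd V \to N \lhd W$ on the circle. You cannot ``zip all the way around'' a circle that is partly labelled by the non-free module $N$ and carries $f$. Your justification that coupons on a free module are just $\cC$-morphisms tensored along the defect is also false. An $A$-module map $h : A \ot X \to A \ot Y$ equals $(\mu \ot \id_Y) \circ (\id_A \ot g)$ with $g = h \circ (\eta \ot \id_X) \in \Hom_{\cC}(X, A \ot Y)$. So $\Hom_A(A \ot X, A \ot Y) \cong \Hom_{\cC}(X, A \ot Y)$, not $\Hom_{\cC}(X,Y)$.

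This corrected description is exactly what is needed. After one insertion of $r_N i_N$ next to $f$, the coupons $r_N$, $f$, $i_N$ are consecutive. They combine into a single coupon $h = (i_N \lhd \id_W) \circ f \circ (r_N \lhd \id_V)$ between the free modules $A \lhd (N \ot V)$ and $A \lhd (N \ot W)$. The whole circle is now free, and by the unit axiom $h$ has the form above with $g = (i_N \ot \id_W) \circ f$. Unzipping then gives an $A$-labelled circle with exactly one multiplication vertex, fed by the interior coupon $g$. The paper reaches the same point by using $A$-linearity of $f$ to move it onto the free part. In both the segment and circle cases, the multiplication vertex of the normal form comes from this description of maps between free modules, not from $r_N$.
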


    \begin{proof}
      The image of $\Psi$ consists of defect skeins represented by a graph having any form in the interior of $M$, and having a unique coupon labelled by multiplication on any defect $L_i$ which is labelled with an algebra $A_i$, and where the point defects are labelled by the morphism $u_{X_i}$. If a line defect $L_i$ ends in point defects, we may moreover bring the attaching multiplication coupons to the point defects, and apply a subduction relation to bring these coupons into the interior.

      Consider any defect skein $[\Gamma]$. We will show that $\Gamma$ is skein equivalent to a graph of the form described above. Consider the part of $\Gamma$ restricted to any line defect $L$, which is labelled by an algebra $A$. Note $L$ is either a copy of $S^1$ or ends at point defects. In the first case, we can assume without loss of generality that $\Gamma \cap L$ has a single coupon $f$ in $L$, which may meet the interior part of $\Gamma$. Then let $N$ be the object labelling the only ribbon of $\Gamma \cap L$. Since $A$ is separable, we can always write $N$ as a retract of $A \ot N$, as in Lemma \ref{l-separable-retract}. Let $i_{N}$ be the inclusion and $r_{N}$ the retraction. We can insert an identity coupon near $f$, and rewrite $\id_{N} = r_{N} \circ i_{N}$. We isotope these coupons apart from each other and wrap one of them around the defect. Since $f$ is a morphism of $A$-modules, we can perform the manipulations of Figure \ref{f-psi-surjective-no-specials} to change the ribbon graph into one which is in the image of $\Psi$ at the defect $L$.

      Now we consider the case when $L$ ends in point defects. By Lemma \ref{l-unitors-everywhere}, we can assume these are always labelled by unitors. If $\Gamma \cap L$ has coupons, we can isotope them to a point defect, and apply a subduction relation  to bring them into the interior. So without loss of generality $\Gamma$ can be assumed to have a single object $N$ labelling $L$ and no coupons on $L$. As before, we insert $\id_{N} = r_{N} \circ i_{N}$ in the $N$-ribbon. We isotope the $i_{N}$ and $r_{N}$ coupons away from each other and towards the point defects at the ends of $L$. Then at each point defect we apply a subduction relation to bring $i_N$ and $r_N$ into the interior. Performing this procedure simultaneously in each 1-stratum which ends at point defects, we will have replaced $\Gamma$ with a graph where every 1-stratum is labelled by the corresponding algebra and has no coupons, and the outgoing ribbons at point defects are labelled with the corresponding multimodules. The result is then in the image of $\Psi$. See Figure \ref{f-psi-surjective}.
    \end{proof}

    \begin{figure}
      \begin{subfigure}{\textwidth}
        \centering
        \includesvg[width=0.75\textwidth]{psi-surjective-no-specials.svg}
        \caption{ Relations which hold in $A\Mod_{\cC}$, shown in the graphical calculus within $\cC$. This uses that $f$ is a morphism of left $A$-modules Here, $N$ is an $A$-module and $f$ may be a morphism $N \lhd V \to N \lhd W$, with the objects of $\cC$ denoted by the lower inputs/outputs to $f$. We denote the rightmost morphism by $\tilde{f}$.}
        \label{f-psi-surjective-no-specials}
      \end{subfigure}\\
      \hspace{0.5cm}
      \begin{subfigure}{\textwidth}
        \centering
        \includesvg[width=0.8\textwidth]{circle-rels.svg}
        \caption{This sequence of skein relations brings the ribbon graph into the image of $\Psi$. Firstly the relation above is applied. Secondly we apply (un)zipping relations. Thirdly we expand the definition of $\tilde{f}$. Fourthly: we complete the unzipping process (isotope the identity coupons away from each other and combine them again in another place along the line defect).}
        \label{f-circle-rels}
      \end{subfigure}  
      \caption{}
      \label{f-psi-surjective-no-specials}
    \end{figure}

    \begin{figure}
      \centering
      \begin{subfigure}[t]{0.49\textwidth}
        \centering
        \includesvg[width=6cm]{psi-surjective-1.svg}
        \caption{ A graph meeting a line defect ending at the pink point defects. The defects are marked $X$ and $Y$, and we assume the graph is coloured by unitors at these points.}
        \label{f-psi-surjective-1}
      \end{subfigure}
      \begin{subfigure}[t]{0.49\textwidth}
        \centering
        \includesvg[width=6cm]{psi-surjective-2.svg}
        \caption{All vertices have been moved into the interior by subduction.}
        \label{f-psi-surjective-2}
      \end{subfigure}\\
      \vspace{0.5cm}
      \begin{subfigure}[t]{0.49\textwidth}
        \centering
        \includesvg[width=6cm]{psi-surjective-3.svg}
        \caption{A copy of $\id_N = r_N i_N$ is inserted in the line defect.}
        \label{f-psi-surjective-3}
      \end{subfigure}
      \begin{subfigure}[t]{0.49\textwidth}
        \centering
        \includesvg[width=6cm]{psi-surjective-4.svg}
        \caption{The $i_N$ and $r_N$ coupons have been subducted into the bulk, leaving only an $A$-label along the line defect.}
        \label{f-psi-surjective-4}
      \end{subfigure}
      \caption{The manipulations used at 1-strata ending in point defects to bring any graph into the form of one in the image of $\Psi$. }
      \label{f-psi-surjective}
    \end{figure}

    \begin{lemma}
      \label{l-phi-surjective-0-simple}
      The map $\Phi$ is surjective.
    \end{lemma}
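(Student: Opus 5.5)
Since $Z^{\mathrm{def}}(\SSigma)$ is the image of the projector $P_\tau$ on $Z^{\RT}(\Surface,\mathbf{D}^\tau)$, every element of $Z^{\mathrm{def}}(\SSigma)$ has the form $\pi_\tau(w)$ for some $w \in Z^{\RT}(\Surface,\mathbf{D}^\tau)$. The plan is to show that $\Phi\circ\Psi = \pi_\tau$ as maps $Z^{\RT}(\Surface,\mathbf{D}^\tau)\to Z^{\mathrm{def}}(\SSigma)$. Surjectivity of $\Phi$ then follows at once, because $\pi_\tau$ is surjective. It is enough to check the identity on elements of the form $\phi_\delta([\Delta])$, where $\Delta$ is an ordinary ribbon graph in $M$ ending at the triangulation points (labelled $A_i$, outgoing) and at the point defects (labelled $X_j$, incoming). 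This is because $\psi_\delta$ is an isomorphism.

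Fix such a $\Delta$. By definition, $\Psi(\phi_\delta[\Delta])$ is the defect skein $\Gamma_\Delta$ obtained by joining each $A_i$-ribbon to the line defect $L_i$ with a multiplication coupon, and putting $u_{X_j}$ at each point defect. Then $\Phi(\Gamma_\Delta) = Z^{\mathrm{def}}(\Cylinder{\Gamma_\Delta})(1)$. To compute this, I choose a skeleton for the surface defects $L_i\times I$ in $\Cylinder{\Gamma_\Delta}$ that induces the triangulation $\tau$ at the top boundary $\SSigma\times\{1\}$. With this choice, $Z^{\mathrm{def}}(\Cylinder{\Gamma_\Delta})(1) = \pi_\tau\big(Z^{\RT}(M\cup \text{cyl},\text{ribbon graph})(1)\big)$. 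Here the ribbon graph consists of $\Delta$ together with the skeleton ribbons in the collar, plus the $A_i$-module and $X_j$-ribbons running along the former boundary.

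The key step is to identify this ribbon bordism, up to isotopy and the Frobenius/module relations in $\cC$, with the ribbon bordism $M_s\circ(M,\Delta)$ defining $P_\tau(\phi_\delta[\Delta])$. The module lines on the bottom of the surface defects are labelled by $A_i$ itself (the regular module), and the only coupon there is the multiplication joining $\Delta$. So the boundary circle or arc of each surface defect carries only $A_i$-structure maps. Using the unit and Frobenius relations, and the $\Delta$-separability $\mu\circ\Delta=\id$ that makes skeleta independent, the regular-module boundary line can be absorbed into the skeleton. The result is exactly a skeleton of $L_i\times I$ with the $A_i$-ribbon of $\Delta$ entering at $\tau_i$ at the bottom. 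At the point defects, the label $u_{X_j}$ is just the unitor, so the $X_j$-line of the cylinder continues the $X_j$-ribbon of $\Delta$; the module action vertices where the skeleton meets this line are the same ones that occur in $M_s$. Hence the value is $\pi_\tau P_\tau\phi_\delta[\Delta] = \pi_\tau\phi_\delta[\Delta]$, and therefore $\Phi\Psi = \pi_\tau$.

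I expect the main obstacle to be this middle identification: showing that gluing the regular $A_i$-module boundary line onto the skeleton reproduces a genuine skeleton. This has to work both for circle defects and for arcs ending at point defects, with orientations fixed by Conventions \ref{conv-X-signs} and \ref{conv-A-signs}. My first approach is a local argument. I would do the check inside a neighbourhood of each $L_i\times I$, using the pictures of Figure \ref{f-triangulation-example}, and show there that a regular-module boundary edge can be contracted into the skeleton. Since skeleton-independence of $Z^{\RT}(M_s)$ is already available from \cite{CRS18LineSurfaceDefects}, it suffices to exhibit a single skeleton for which the contraction is visibly an isotopy plus one use of associativity.
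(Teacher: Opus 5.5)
Your proposal is correct and is essentially the paper's proof. Both show $\Phi\circ\Psi=\pi_\tau$ by recognising $\Cylinder{\Psi(v)}$ as the projector bordism $M_s$ glued onto $(M,\psi_\delta(v))$ and using $\pi_\tau P_\tau=\pi_\tau$; the paper then phrases the conclusion as $\Phi\Psi\iota_\tau=\id$, while you use surjectivity of $\pi_\tau$, and your local skeleton-independence argument makes explicit the step the paper compresses into ``$Z^{\mathrm{def}}$ can be computed using $Z^{\RT}$''.
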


    \begin{proof}
      We observe that the diagram below is commutative:
      \[\begin{tikzcd}
        && {Z^{RT}(\Surface, \mathbf{D})} \\
        {\Sk_{\cC}(\mathbb{M})} \\
        && {Z^{\mathrm{def}}(\Surface, \mathbf{D})}
        \arrow["\Psi"', from=1-3, to=2-1]
        \arrow["\pi", from=1-3, to=3-3]
        \arrow["\Phi"', from=2-1, to=3-3]
      \end{tikzcd}\]
      This follows from the definition of $\Phi, \Psi$ and the construction of $Z^{\mathrm{def}}(\SSigma)$. 
Indeed,      
      the composite maps a vector $v \in Z^{\RT}(\Surface, \mathbf{D})$ to
      \begin{align*}
        &Z^{\mathrm{def}}(\SSigma \xleftarrow{\Cylinder{\Psi(v)}} \emptyset)(1)\\
        &= \pi Z^{\RT}((\Surface, \mathbf{D}) \xleftarrow{(\Surface \times I, \Delta)} (\Surface, \mathbf{D}) \xleftarrow{(M, \psi_{\delta}(v))} \emptyset)(1)\\
        &= \pi Z^{\RT}(\Surface \times I, \Delta)(Z^{\RT}(M, \psi_{\delta}(v))(1))\\
        &= \pi Z^{\RT}(\Surface \times I, \Delta)(\phi_{\delta}\psi_{\delta}(v))\\
        &= \pi Z^{\RT}(\Surface \times I, \Delta)(v)\\
        &= \pi(v).
      \end{align*}
       The first equality uses that $Z^{\mathrm{def}}$ can be computed using $Z^{\RT}$.  
       Here $(\Surface \times I, \Delta)$ denotes the ribbon bordism obtained by replacing the surface defects in $\SSigma \times I$ by their skeleta, with endpoint defects $\mathbf{D}$ at either end of the cylinder. The second equality is notational, the third equality is the definition of $\phi_{\delta}$, and the fourth equality uses that $\phi_{\delta}$ and $\psi_{\delta}$ are inverse to each other. The fifth equality uses that $\pi$ is the universal map to a colimit, and $Z^{\RT}(\Surface \times I, \Delta)$ is one of the arrows in the diagram defining this colimit.
      
      Now let $\iota : Z^{\mathrm{def}}(\SSigma) \to Z^{\mathrm{RT}}(\Surface, \mathbf{D})$ be the inclusion, it follows that $\Phi \Psi \iota = \pi \iota = \id_{Z^{\mathrm{def}}(\SSigma)}$. So $\Phi$ has a section and is therefore surjective.
    \end{proof}

    \begin{lemma}
      \label{l-psi-res-surjective-0-simple}
      The map $\Psi \circ \iota$ is surjective. 
    \end{lemma}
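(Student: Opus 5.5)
We will show that $\Psi = \Psi \circ P_\tau$ as maps $Z^{\mathrm{RT}}(\Surface, \mathbf{D}) \to \Sk(\M)$. Since $P_\tau = \iota \circ \pi$, this gives $\Psi = (\Psi\circ\iota)\circ\pi$. Then Lemma~\ref{l-psi-surjective-0-simple} immediately yields that $\Psi\circ\iota$ is surjective: every defect skein is $\Psi(v) = (\Psi\iota)(\pi(v))$.

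To prove $\Psi \circ P_\tau = \Psi$, we first express $P_\tau$ in skein-theoretic terms. By \eqref{eq-RT-bordisms-via-skeins}, $P_\tau = \phi_\delta \circ \Sk(\Surface\times I, \Delta) \circ \psi_\delta$, where $(\Surface\times I,\Delta)$ is the ribbon cylinder obtained from a skeleton of $\SSigma\times I$. Hence $\Psi P_\tau(v)$ is represented by the following construction. Take a representative $\Gamma$ of $\psi_\delta(v)$, glue on the skeleton cylinder $\Delta$, and then attach the result to the defects: $A_i$-ribbons are joined to $L_i$ by multiplication, and $X_j$-ribbons are joined to point defects labelled $u_{X_j}$. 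Choose a collar of $\partial M$ in $M$ and realise this configuration inside it, so that the skeleton network lies parallel to the boundary. We then need to show this skein equals $\Psi(v)$, the same picture without the skeleton.

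The key step is to remove the skeleton with defect skein relations. The skeleton consists of $A_i$-ribbons arranged near each line defect $L_i$, carrying (co)multiplication vertices and ending at the triangulation point, together with module-action vertices where it meets the $X_j$-lines. In the collar, each such $A_i$-network lies in a neighbourhood parallel to $L_i$. We proceed in three moves.
\begin{enumerate}
\item Using the zipping relations (Figure~\ref{f-sk-rels-4}), attach the skeleton $A_i$-ribbons to the defect $L_i$, which is labelled by the regular module $A_i$.
\item Use the $\cM_i$-skein relations (the relations in $A_i\Mod_\cC$) to collapse the resulting (co)multiplications. Here the Frobenius, $\Delta$-separability and symmetry properties give the equalities $\mu\circ\Delta = \id$ and $(\mu\otimes\id)(\id\otimes\Delta)=\Delta\mu$. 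These reduce the network to a single multiplication coupon at the triangulation point.
\item At point defects, the action vertices of $X_j$ combine with the $u_{X_j}$ label. Use subduction (Figure~\ref{f-sk-rels-5}) and the balancing of $X_j\ot_{A_{\mathbf j}} -$ to absorb them.
\end{enumerate}
The point is that the skeleton implements the projector onto $A$-balanced/$A$-linear maps, and these are already identities in $\Sk(\M)$, since the defect ribbons carry genuine $A$-modules and the point defects carry functors $X\ot_A-$.

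The main obstacle is step (3) together with non-contractible pieces of the skeleton. Examples are the cylinder skeleton around a circular defect (Figure~\ref{f-triangulation-example}, right) and the square skeleton with red edges identified. Here the $A$-loop wraps the defect and cannot be cleared by purely local moves. For these I would first try the argument used in the proof of Lemma~\ref{l-psi-surjective-0-simple} and Figure~\ref{f-circle-rels}: unzip an identity coupon, slide it around the defect, and recombine, using that the module action is $A$-linear. Care is needed with the pivotal and twist factors coming from symmetry of $A$, and with the orientation conventions~\ref{conv-X-signs} and~\ref{conv-A-signs}; these are where I expect the verification to be fiddly.
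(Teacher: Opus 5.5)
Your proposal is correct and follows the paper's proof essentially exactly. The paper also proves $\Psi \circ P = \Psi$ and concludes from surjectivity of $\Psi$ together with $P = \iota \circ \pi$. It verifies $\Psi P = \Psi$ by moving the skeleton vertices onto the defect using line-defect skein relations and the Frobenius and $\Delta$-separability properties of $A$, and at point defects by subduction, using that $X \otimes_A -$ sends the multiplication of $A$ to the action on $X$. For the circular defect, which you flag as the main obstacle, the paper does what you suggest: it slides the coupons around the circle (``periodicity'') and then applies $\Delta$-separability.
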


    \begin{proof}
      We already showed that $\Psi$ is surjective in Lemma \ref{l-psi-surjective-0-simple}. We will show that the diagram below commutes:
      \[\begin{tikzcd}
        && {Z^{RT}(\Surface, \mathbf{D})} \\
        {\Sk_{\cC}(\mathbb{M})} \\
        && {Z^{RT}(\Surface, \mathbf{D})}
        \arrow["\Psi"', from=1-3, to=2-1]
        \arrow["P", from=1-3, to=3-3]
        \arrow["\Psi", from=3-3, to=2-1]
      \end{tikzcd}\]
      Then the claim follows since by Lemma \ref{l-psi-surjective-0-simple}, we know that for any skein $y$, we there exists $x$ with $\Psi(x) = y$. But by commutativity of the above diagram we have $\Psi P(x) = y$, but $P = \iota \circ \pi$, so $(\Psi \circ \iota)(\pi(x)) = y$ and $\Psi \circ \iota$ is surjective.   

      To see that the above diagram commutes, we consider any element $\Psi P(x)$. We will assume the skeleta of Figure \ref{f-triangulation-example} are chosen to define $P$. Then along any line defect which is a copy of $S^1$, the sequence of skein relations shown in Figure \ref{f-psi-p-is-psi-1} shows that $\Psi P(x) = \Psi(x)$. Along any line defect which ends in point defects (where for the sake of exposition that this is the only incident line defect at its endpoints - the general case follows), the sequence of moves shown in Figure \ref{f-psi-p-is-psi-2} transforms $\Psi P(x)$ into $\Psi(x)$. This uses that $X \ot_{A} -$ sends the multiplication of $A$ to the action of $A$ on $X$. 
    \end{proof}

    \begin{figure}
      \centering
      \begin{subfigure}{\textwidth}
        \centering
        \includesvg[width=12cm]{psi-p-is-psi-1.svg}
        \caption{The case of a skein $x$ ending at a circular line defect. All lines here are labelled by a $\Delta$-separable symmetric Frobenius algebra $A$ which marked the line defect. Leftmost shows $\Psi P(x)$. The first equivalence uses skein relations along a line defect to move vertices onto the defect, using that $A$ is a Frobenius algebra. The second equivalence simply rotates the figure and uses periodicity. The third equivalence uses that $A$ is $\Delta$-separable. Rightmost then shows $\Psi(x)$.}
        \label{f-psi-p-is-psi-1}
      \end{subfigure}\\
      \vspace{0.5cm}
      \begin{subfigure}{\textwidth}
        \centering
        \includesvg[width=12cm]{psi-p-is-psi-2.svg}
        \caption{The case of a skein $x$ ending at a line defect which ends in point defects. Left: $\Psi(x)$. Middle: this is equivalent under the skein relations along the line defect due to the properties of $A$ being a $\Delta$-separable symmetric Frobenius algebra. Right: The result of subducting the additional multiplication coupons into the interior. This is a depiction of $\Psi P(x)$.}
        \label{f-psi-p-is-psi-2}
      \end{subfigure}
      \caption{The manipulations used to show $\Psi P =\Psi$.} 
      \label{f-psi-p-is-psi}
    \end{figure}

    \begin{thm}
      \label{t-main-thm-in-text}
      The maps $\Phi$ and $\Psi \circ \iota$ are mutually inverse isomorphisms.     
    \end{thm}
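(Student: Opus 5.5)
We assemble the theorem from the lemmas just proved, by purely formal linear algebra. In the proof of Lemma~\ref{l-phi-surjective-0-simple} we showed that $\Phi\circ\Psi = \pi$ as maps $Z^{\mathrm{RT}}(\Surface,\mathbf{D}) \to Z^{\mathrm{def}}(\SSigma)$. Precomposing with the section $\iota$ and using $\pi\iota = \id_{Z^{\mathrm{def}}(\SSigma)}$ gives
\[
\Phi \circ (\Psi\circ\iota) = \id_{Z^{\mathrm{def}}(\SSigma)}.
\]
So $\Psi\circ\iota$ is a right inverse of $\Phi$; in particular it is injective.

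By Lemma~\ref{l-psi-res-surjective-0-simple}, $\Psi\circ\iota$ is also surjective, hence a bijection, and its one-sided inverse $\Phi$ must be its two-sided inverse. Concretely, given a skein $y$, choose $z$ with $(\Psi\circ\iota)(z) = y$. Then
\[
(\Psi\circ\iota)\Phi(y) = (\Psi\circ\iota)\bigl(\Phi(\Psi\circ\iota)(z)\bigr) = (\Psi\circ\iota)(z) = y,
\]
so $(\Psi\circ\iota)\circ\Phi = \id_{\Sk(\M)}$. No finite-dimensionality of $\Sk(\M)$ is needed here.

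The remaining work is to check that the conventions used in the lemmas cover the general statement.
\begin{itemize}
\item Convention~\ref{conv-X-signs} (all point defects negatively oriented) is removed via the isomorphisms of Lemma~\ref{l-X-flip}. On the skein side, flipping a point defect corresponds to relabelling by $X^\vee$ and inserting $p_X$. One must check that $\Phi$ intertwines these flips, which holds because the flip bordism is glued onto the cylinder.
\item Convention~\ref{conv-A-signs} (one positive triangulation point per line defect) is removed using Lemma~\ref{l-A-flip} and the compatibility $P_{\tau_1,\tau_3} = P_{\tau_2,\tau_3}P_{\tau_1,\tau_2}$. Since $Z^{\mathrm{def}}(\SSigma)$ is the colimit over triangulations, it is independent of these choices.
\end{itemize}

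The main obstacle is not the algebra above but making sure that $\Phi\circ\Psi=\pi$ really holds for the fixed $\tau$. This requires the ribbon bordism obtained from $\Cylinder{\Psi(v)}$, after replacing surface defects by skeleta, to represent exactly $P_\tau$ composed with $v$. Two points need care here. First, the unitor label $u_{X_i}$ at the point defects must match the $X\otimes_A-$ evaluation of Lemma~\ref{l-strat-sphere-space}. Second, the multiplication coupons at the triangulation points must be absorbed by the skeleton's $\Delta$-separable Frobenius structure. I would verify both locally, using the skeleta of Figure~\ref{f-triangulation-example} as in Lemma~\ref{l-psi-res-surjective-0-simple}.
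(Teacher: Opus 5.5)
Your proof is correct, but it closes the argument differently from the paper.

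The paper argues by counting dimensions. $Z^{\mathrm{def}}(\SSigma)$ is finite-dimensional by construction, and $\Sk(\M)$ is finite-dimensional because $\Psi$ surjects onto it from $Z^{\mathrm{RT}}(\Surface,\mathbf{D})$ (Lemma~\ref{l-psi-surjective-0-simple}). Surjectivity of $\Phi$ (Lemma~\ref{l-phi-surjective-0-simple}) and of $\Psi\circ\iota$ (Lemma~\ref{l-psi-res-surjective-0-simple}) then forces both maps to be isomorphisms. That they are inverse to \emph{each other} rests, implicitly, on the identity $\Phi\circ\Psi\circ\iota=\id$ proved inside Lemma~\ref{l-phi-surjective-0-simple}.

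You put that identity first. It makes $\Psi\circ\iota$ injective, Lemma~\ref{l-psi-res-surjective-0-simple} makes it surjective, and your one-line computation gives the other composite. This needs neither finite-dimensionality nor a separate surjectivity statement for $\Phi$. It also makes the ``mutually inverse'' part explicit rather than implicit, and it is more robust because it uses no finiteness at all. The paper's route, in exchange, records finite-dimensionality of $\Sk(\M)$ along the way, though this also follows a posteriori from your isomorphism.

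Your last two paragraphs are not needed. The paper states and proves the theorem under Conventions~\ref{conv-X-signs} and~\ref{conv-A-signs} with $\tau$ fixed, so removing those conventions is extra to what is claimed. The two local checks you flag for $\Phi\circ\Psi=\pi$ are what the first equality in the proof of Lemma~\ref{l-phi-surjective-0-simple} addresses. Citing that lemma, as you do in your opening step, is enough.
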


    \begin{proof}
First note that $\Sk(\mathbb{M})$ is finite-dimensional, since $\Psi$ is surjective and that $Z^{\mathrm{def}}(\SSigma)$ is finite-dimensional by construction.
      Since the spaces $Z^{\mathrm{def}}(\SSigma), \Sk(\mathbb{M})$ are finite-dimensional,       it suffices to show that the maps are surjective. We have shown this in Lemmas \ref{l-phi-surjective-0-simple} \ref{l-psi-res-surjective-0-simple}. 
    \end{proof}

    We close by illustrating an application of this theorem with our running example.

    \begin{eg}
      Consider the $\mathfrak{D}^{\cC}$-marked 3-manifold $\mathbb{K}_{+}$ of Example \ref{eg-cakes-via-handles}, with boundary $\mathbb{T}$. Suppose that the unique line defect is marked by the algebra $A$, and the unique point defect by $F = X_{A^{\op} \ot A}$. Then $\mathbb{K}_{+}$ is $\overline{\mathfrak{D}^{\cC}}$-marked. In the notation of Lemma \ref{l-solid-torus-example},    
      we have that for any $M \in A\Mod_{\cC}$,
      \[
        \tilde{F}(M, M) = \Hom_{\cC}(X_{A^{\op} \ot A}(M \ot M), \One).
      \]
      Rewriting $X_{A^{\op} \ot A}(M \ot M) \cong M^{\vee} \ot_{A} X \ot_A M$, an application of Lemma \ref{l-Hom-C-vs-A} gives
      \[
        \Hom_{\cC}(M^{\vee} \ot_{A} X \ot_A M, \One) \cong \Hom_A(X \ot_A M, M).
      \]
      Then by Lemma \ref{l-solid-torus-example}, we have that
      \[
        \Sk(\mathbb{K}_{+}) \cong \int^{M \in A\Mod_{\cC}} \tilde{F}(M, M) \cong \bigoplus_{M \in \Irr(A\Mod_{\cC})} \Hom_A(X \ot_A M, M)
      \]
      where we use semisimplicity of $A\Mod_{\cC}$.

      Now consider the manifold $\mathbb{K}_{-}$ of Example \ref{eg-cakes-via-handles}. By a similar argument to the above, we have 
      \[
        \hat{F}(M, M, U, U) \cong \Hom_A(X \ot_{A} M \ot U, M \ot U).
      \]
      for $\hat{F}$ as introduced before Lemma \ref{l-skein-example-T}.     
      Given $U \cong V \ot W$, then the relations $f_W \sim f_W^{\rho}$ of Lemma \ref{l-skein-example-K-minus} are equivalent to relations on $\Hom_A(X \ot_{A} M \ot U, M \ot U)$. The quotient ranging over all isomorphisms $U \cong V \ot W$ gives the (non-coend) relations of Lemma \ref{l-skein-example-K-minus} on the space $\Hom_A(X \ot_{A} M \ot U, M \ot U)$. Denote by $Q_{M,U}$ the projector to the quotient by these relations. It then follows from Lemma \ref{l-skein-example-K-minus} that
      \[
        \Sk(\mathbb{K}_{-}) \cong \bigoplus_{\substack{M \in \Irr(A\Mod_{\cC})\\U \in \Irr(C)}} \Img Q_{M, U}.
      \]
      Finally, it follows from Theorem \ref{t-main-thm-in-text} that 
      \[
        \Sk(\mathbb{K}_{+}) \cong Z^{\mathrm{def}}(\SSigma) \cong \Sk(\mathbb{K}_{-}).
      \]
      Therefore, we have an isomorphism 
      \[
       \bigoplus_{M \in \Irr(A\Mod_{\cC})} \Hom_A(X \ot_A M, M) \cong \bigoplus_{\substack{M \in \Irr(A\Mod_{\cC})\\U \in \Irr(C)}} \Img Q_{M, U}.
      \]
      The complicated expression on the right-hand side, with a direct sum over two variables and a quotient, can therefore be expressed by the comparatively simpler expression on the left-hand side. Taking $X = A$, then at the level of dimensions we can conclude that
      \[
        \sum_{\substack{M \in \Irr(A\Mod_{\cC})\\U \in \Irr(C)}} \dim \Img Q_{M, U} = \# \Irr(A\Mod_{\cC}).
      \]
    \end{eg}

    \sloppy

    \printbibliography

@article{AFTFactorizationHomologyStratified2016,
  title = {Factorization Homology of Stratified Spaces},
  author = {Ayala, David and Francis, John and Tanaka, Hiro Lee},
  year = 2016,
  month = apr,
  journal = {Selecta Mathematica},
  volume = {23},
  number = {1},
  eprint = {1409.0848},
  pages = {293--362},
  doi = {10.1007/s00029-016-0242-1},
  url = {http://arxiv.org/abs/1409.0848},
  urldate = {2021-08-03},
  archiveprefix = {arXiv},
  langid = {english}
}

@article{BBJ18IntegratingQuantumGroups,
  title = {Integrating Quantum Groups over Surfaces},
  author = {Ben-Zvi, David and Brochier, Adrien and Jordan, David},
  year = 2018,
  month = dec,
  journal = {Journal of Topology},
  volume = {11},
  number = {4},
  eprint = {1501.04652},
  pages = {874--917},
  issn = {1753-8416, 1753-8424},
  doi = {10.1112/topo.12072},
  url = {https://onlinelibrary.wiley.com/doi/10.1112/topo.12072},
  urldate = {2024-04-12},
  archiveprefix = {arXiv},
  langid = {english}
}

@article{BH24SkeinCategoriesNonsemisimple,
  title = {Skein Categories in Non-Semisimple Settings},
  author = {Brown, Jennifer and Ha{\"i}oun, Benjamin},
  year = 2026,
  month = mar,
  journal = {Selecta Mathematica},
  volume = {32},
  number = {2},
  eprint = {2406.08956},
  publisher = {arXiv},
  doi = {10.1007/s00029-026-01143-z},
  url = {http://arxiv.org/abs/2406.08956},
  urldate = {2024-09-20},
  archiveprefix = {arXiv},
  langid = {english}
}

@article{BHMV95TopologicalQuantumField,
  title = {Topological Quantum Field Theories Derived from the Kauffman Bracket},
  author = {Blanchet, Christian and Habegger, Nathan and Masbaum, Gregor and Vogel, Pierre},
  year = 1995,
  month = oct,
  journal = {Topology},
  volume = {34},
  number = {4},
  pages = {883--927},
  issn = {00409383},
  doi = {10.1016/0040-9383(94)00051-4},
  url = {https://linkinghub.elsevier.com/retrieve/pii/0040938394000514},
  urldate = {2024-04-29},
  copyright = {https://www.elsevier.com/tdm/userlicense/1.0/},
  langid = {english}
}

@misc{BJ25ParabolicSkeinModules,
  title = {Parabolic Skein Modules},
  author = {Brown, Jennifer and Jordan, David},
  year = 2025,
  month = may,
  number = {arXiv:2505.14836},
  eprint = {2505.14836},
  primaryclass = {math},
  publisher = {arXiv},
  doi = {10.48550/arXiv.2505.14836},
  url = {http://arxiv.org/abs/2505.14836},
  urldate = {2025-08-21},
  archiveprefix = {arXiv},
  langid = {english}
}

@article{BJS21DualizabilityBraidedTensor,
  title = {On Dualizability of Braided Tensor Categories},
  author = {Brochier, Adrien and Jordan, David and Snyder, Noah},
  year = 2021,
  month = mar,
  journal = {Compositio Mathematica},
  volume = {157},
  number = {3},
  eprint = {1804.07538},
  pages = {435--483},
  doi = {10.1112/S0010437X20007630},
  url = {http://arxiv.org/abs/1804.07538},
  urldate = {2021-02-11},
  archiveprefix = {arXiv},
  langid = {english}
}

@article{CGHP23Skein3+1TQFTsNonsemisimple,
  title = {Skein (3+1)-TQFTs from Non-Semisimple Ribbon Categories},
  author = {Costantino, Francesco and Geer, Nathan and Ha{\"i}oun, Benjamin and Patureau Mirand, Bertrand},
  year = 2026,
  month = apr,
  journal = {Symmetry, Integrability and Geometry: Methods and Applications},
  eprint = {2306.03225},
  primaryclass = {math},
  publisher = {arXiv},
  doi = {10.3842/SIGMA.2026.034},
  url = {http://arxiv.org/abs/2306.03225},
  urldate = {2025-08-21},
  archiveprefix = {arXiv}
}

@misc{CGP23AdmissibleSkeinModules,
  title = {Admissible Skein Modules},
  author = {Costantino, Francesco and Geer, Nathan and {Patureau-Mirand}, Bertrand},
  year = 2023,
  month = feb,
  number = {arXiv:2302.04493},
  eprint = {2302.04493},
  publisher = {arXiv},
  url = {http://arxiv.org/abs/2302.04493},
  urldate = {2023-04-19},
  archiveprefix = {arXiv},
  langid = {english}
}

@article{CM26OrbifoldCompletion3Categories,
  title = {Orbifold Completion of 3-Categories},
  author = {Carqueville, Nils and M{\"u}ller, Lukas},
  year = 2026,
  month = jan,
  journal = {Communications in Mathematical Physics},
  volume = {407},
  number = {1},
  eprint = {2307.06485},
  pages = {8},
  issn = {0010-3616, 1432-0916},
  doi = {10.1007/s00220-025-05434-y},
  url = {https://link.springer.com/10.1007/s00220-025-05434-y},
  urldate = {2026-06-05},
  archiveprefix = {arXiv},
  langid = {english}
}

@misc{CMR+21OrbifoldGraphTQFTs,
  title = {Orbifold Graph TQFTs},
  author = {Carqueville, Nils and Mulevicius, Vincentas and Runkel, Ingo and Schaumann, Gregor and Scherl, Daniel},
  year = 2021,
  month = jan,
  number = {arXiv:2101.02482},
  eprint = {2101.02482},
  primaryclass = {math},
  publisher = {arXiv},
  doi = {10.48550/arXiv.2101.02482},
  url = {http://arxiv.org/abs/2101.02482},
  urldate = {2026-04-20},
  archiveprefix = {arXiv}
}

@article{CMS203dimensionalDefectTQFTs,
  title = {3-Dimensional Defect TQFTs and Their Tricategories},
  author = {Carqueville, Nils and Meusburger, Catherine and Schaumann, Gregor},
  year = 2020,
  month = apr,
  journal = {Advances in Mathematics},
  volume = {364},
  eprint = {1603.01171},
  primaryclass = {math},
  pages = {107024},
  issn = {00018708},
  doi = {10.1016/j.aim.2020.107024},
  url = {http://arxiv.org/abs/1603.01171},
  urldate = {2025-10-27},
  archiveprefix = {arXiv}
}

@article{Coo23ExcisionSkeinCategories,
  title = {Excision of Skein Categories and Factorisation Homology},
  author = {Cooke, Juliet},
  year = 2023,
  month = feb,
  journal = {Advances in Mathematics},
  volume = {414},
  eprint = {1910.02630},
  pages = {108848},
  issn = {00018708},
  doi = {10.1016/j.aim.2022.108848},
  url = {https://linkinghub.elsevier.com/retrieve/pii/S000187082200665X},
  urldate = {2024-04-10},
  archiveprefix = {arXiv},
  langid = {english}
}

@article{CRS18LineSurfaceDefects,
  title = {Line and Surface Defects in Reshetikhin-Turaev TQFT},
  author = {Carqueville, Nils and Runkel, Ingo and Schaumann, Gregor},
  year = 2018,
  month = oct,
  journal = {Quantum Topology},
  volume = {10},
  number = {3},
  eprint = {1710.10214},
  primaryclass = {hep-th, physics:math-ph},
  pages = {399--439},
  issn = {1663-487X, 1664-073X},
  doi = {10.4171/QT/121},
  url = {http://arxiv.org/abs/1710.10214},
  urldate = {2024-09-10},
  archiveprefix = {arXiv}
}

@article{CRS19OrbifoldsNdimensionalDefect,
  title = {Orbifolds of N-Dimensional Defect TQFTs},
  author = {Carqueville, Nils and Runkel, Ingo and Schaumann, Gregor},
  year = 2019,
  month = apr,
  journal = {Geometry \& Topology},
  volume = {23},
  number = {2},
  eprint = {1705.06085},
  primaryclass = {math},
  pages = {781--864},
  issn = {1364-0380, 1465-3060},
  doi = {10.2140/gt.2019.23.781},
  url = {http://arxiv.org/abs/1705.06085},
  urldate = {2025-06-16},
  archiveprefix = {arXiv}
}

@article{CRS20OrbifoldsReshetikhinTuraevTQFTs,
  title = {Orbifolds of Reshetikhin-Turaev TQFTs},
  author = {Carqueville, Nils and Runkel, Ingo and Schaumann, Gregor},
  year = 2020,
  month = apr,
  journal = {Theory and Applications of Categories},
  volume = {35},
  number = {15},
  eprint = {1809.01483},
  primaryclass = {math},
  pages = {513--561},
  doi = {10.48550/arXiv.1809.01483},
  url = {http://www.tac.mta.ca/tac/volumes/35/15/35-15abs.html},
  urldate = {2025-08-21},
  archiveprefix = {arXiv}
}

@incollection{CY93CategoricalConstruction4d,
  title = {A Categorical Construction of 4d Topological Quantum Field Theories},
  booktitle = {Quantum Topology},
  author = {Crane, Louis and Yetter, David},
  editor = {Kauffman, Louis H and Baadhio, Randy A},
  year = 1993,
  month = sep,
  series = {Series on Knots and Everything},
  volume = {3},
  eprint = {hep-th/9301062},
  pages = {120--130},
  publisher = {World Scientific},
  doi = {10.1142/9789812796387_0005},
  url = {http://www.worldscientific.com/doi/abs/10.1142/9789812796387_0005},
  urldate = {2024-04-27},
  archiveprefix = {arXiv},
  isbn = {978-981-02-1544-6 978-981-279-638-7},
  langid = {english}
}

@incollection{Del90CategoriesTannakiennes,
  title = {Cat\'egories Tannakiennes},
  booktitle = {The Grothendieck Festschrift},
  author = {Deligne, P.},
  editor = {Cartier, Pierre and Katz, Nicholas M. and Manin, Yuri I. and Illusie, Luc and Laumon, G{\'e}rard and Ribet, Kenneth A.},
  year = 1990,
  pages = {111--195},
  publisher = {Birkh\"auser Boston},
  address = {Boston, MA},
  doi = {10.1007/978-0-8176-4575-5_3},
  url = {http://link.springer.com/10.1007/978-0-8176-4575-5_3},
  urldate = {2026-02-17},
  isbn = {978-0-8176-4567-0 978-0-8176-4575-5},
  langid = {english}
}

@article{DGG+223DimensionalTQFTsNonsemisimple,
  title = {3-Dimensional TQFTs from Non-Semisimple Modular Categories},
  author = {De Renzi, Marco and Gainutdinov, Azat M. and Geer, Nathan and {Patureau-Mirand}, Bertrand and Runkel, Ingo},
  year = 2022,
  month = jan,
  journal = {Selecta Mathematica},
  volume = {28},
  number = {2},
  eprint = {1912.02063},
  doi = {10.1007/s00029-021-00737-z},
  url = {http://arxiv.org/abs/1912.02063},
  urldate = {2022-09-13},
  archiveprefix = {arXiv},
  langid = {english}
}

@book{EGNO15TensorCategories,
  title = {Tensor Categories},
  author = {Etingof, Pavel and Gelaki, Shlomo and Nikshych, Dmitri and Ostrik, Victor},
  year = 2015,
  series = {Mathematical Surveys and Monographs},
  number = {volume 205},
  publisher = {American Mathematical Society},
  address = {Providence, Rhode Island},
  isbn = {978-1-4704-2024-6},
  langid = {english},
  lccn = {QA612 .T46 2015}
}

@article{ENO10FusionCategoriesHomotopy,
  title = {Fusion Categories and Homotopy Theory},
  author = {Etingof, Pavel and Nikshych, Dmitri and Ostrik, Victor},
  year = 2010,
  month = aug,
  journal = {Quantum Topology},
  volume = {1},
  number = {3},
  eprint = {0909.3140},
  pages = {209--273},
  issn = {1663-487X, 1664-073X},
  doi = {10.4171/qt/6},
  url = {https://ems.press/doi/10.4171/qt/6},
  urldate = {2026-02-17},
  archiveprefix = {arXiv}
}

@article{FFRS04KramersWannierDualityConformal,
  title = {Kramers-Wannier Duality from Conformal Defects},
  author = {Fr{\"o}hlich, J{\"u}rg and Fuchs, J{\"u}rgen and Runkel, Ingo and Schweigert, Christoph},
  year = 2004,
  month = aug,
  journal = {Physical Review Letters},
  volume = {93},
  number = {7},
  eprint = {cond-mat/0404051},
  pages = {070601},
  issn = {0031-9007, 1079-7114},
  doi = {10.1103/PhysRevLett.93.070601},
  url = {http://arxiv.org/abs/cond-mat/0404051},
  urldate = {2025-08-21},
  archiveprefix = {arXiv},
  langid = {english}
}

@article{FMT23TopologicalSymmetryQuantum,
  title = {Topological Symmetry in Quantum Field Theory},
  author = {Freed, Daniel S. and Moore, Gregory W. and Teleman, Constantin},
  year = 2024,
  month = oct,
  journal = {Quantum Topology},
  volume = {15},
  number = {3},
  eprint = {2209.07471},
  pages = {779--869},
  publisher = {arXiv},
  doi = {10.4171/QT/223},
  url = {http://arxiv.org/abs/2209.07471},
  urldate = {2023-08-08},
  archiveprefix = {arXiv},
  langid = {english}
}

@article{FSV13BicategoriesBoundaryConditions,
  title = {Bicategories for Boundary Conditions and for Surface Defects in 3-d TFT},
  author = {Fuchs, J{\"u}rgen and Schweigert, Christoph and Valentino, Alessandro},
  year = 2013,
  month = jul,
  journal = {Communications in Mathematical Physics},
  volume = {321},
  number = {2},
  eprint = {1203.4568},
  pages = {543--575},
  issn = {0010-3616, 1432-0916},
  doi = {10.1007/s00220-013-1723-0},
  url = {http://link.springer.com/10.1007/s00220-013-1723-0},
  urldate = {2026-06-05},
  archiveprefix = {arXiv},
  copyright = {http://www.springer.com/tdm},
  langid = {english}
}

@article{FSY23StringnetModelsPivotal,
  title = {String-Net Models for Pivotal Bicategories},
  author = {Fuchs, J{\"u}rgen and Schweigert, Christoph and Yang, Yang},
  year = 2025,
  month = may,
  journal = {Theory and Applications of Categories},
  volume = {44},
  number = {17},
  eprint = {2302.01468},
  pages = {474--543},
  url = {http://www.tac.mta.ca/tac/volumes/44/17/44-17abs.html},
  urldate = {2026-06-05},
  archiveprefix = {arXiv},
  langid = {english}
}

@misc{Gar26HOMFLYParabolicRestriction,
  title = {HOMFLY Parabolic Restriction, Defect Skein Theory and the Turaev Coproduct},
  author = {Garc{\'i}a G{\'o}mez, Juan Ram{\'o}n},
  year = 2026,
  month = jan,
  number = {arXiv:2601.03196},
  eprint = {2601.03196},
  primaryclass = {math},
  publisher = {arXiv},
  doi = {10.48550/arXiv.2601.03196},
  url = {http://arxiv.org/abs/2601.03196},
  urldate = {2026-04-21},
  archiveprefix = {arXiv}
}

@article{GKSW15GeneralizedGlobalSymmetries,
  title = {Generalized Global Symmetries},
  author = {Gaiotto, Davide and Kapustin, Anton and Seiberg, Nathan and Willett, Brian},
  year = 2015,
  month = feb,
  journal = {Journal of High Energy Physics},
  volume = {2015},
  number = {2},
  eprint = {1412.5148},
  primaryclass = {hep-th},
  pages = {172},
  issn = {1029-8479},
  doi = {10.1007/JHEP02(2015)172},
  url = {http://arxiv.org/abs/1412.5148},
  urldate = {2025-08-21},
  archiveprefix = {arXiv},
  langid = {english}
}

@book{Kel05BasicConceptsEnriched,
  title = {Basic Concepts of Enriched Category Theory},
  author = {Kelly, G.M.},
  year = 2005,
  series = {Reprints in Theory and Applications of Categories},
  number = {10},
  url = {http://www.tac.mta.ca/tac/reprints/articles/10/tr10abs.html},
  langid = {english}
}

@article{Kel89ElementaryObservations2categorical,
  title = {Elementary Observations on 2-Categorical Limits},
  author = {Kelly, G.M.},
  year = 1989,
  month = apr,
  journal = {Bulletin of the Australian Mathematical Society},
  volume = {39},
  number = {2},
  pages = {301--317},
  issn = {0004-9727, 1755-1633},
  doi = {10.1017/S0004972700002781},
  url = {https://www.cambridge.org/core/product/identifier/S0004972700002781/type/journal_article},
  urldate = {2026-02-17},
  copyright = {https://www.cambridge.org/core/terms},
  langid = {english}
}

@article{KMRS22DomainWalls3d,
  title = {Domain Walls Between 3d Phases of Reshetikhin--Turaev TQFTs},
  author = {Koppen, Vincent and Mulevi{\v c}ius, Vincentas and Runkel, Ingo and Schweigert, Christoph},
  year = 2022,
  month = dec,
  journal = {Communications in Mathematical Physics},
  volume = {396},
  number = {3},
  eprint = {2105.04613},
  pages = {1187--1220},
  issn = {0010-3616, 1432-0916},
  doi = {10.1007/s00220-022-04489-5},
  url = {https://link.springer.com/10.1007/s00220-022-04489-5},
  urldate = {2026-06-05},
  archiveprefix = {arXiv},
  langid = {english}
}

@article{Lop13TensorProductsFinitely,
  title = {Tensor Products of Finitely Cocomplete and Abelian Categories},
  author = {L{\'o}pez Franco, Ignacio},
  year = 2013,
  month = dec,
  journal = {Journal of Algebra},
  volume = {396},
  eprint = {1212.1545},
  pages = {207--219},
  issn = {00218693},
  doi = {10.1016/j.jalgebra.2013.08.015},
  url = {https://linkinghub.elsevier.com/retrieve/pii/S0021869313004626},
  urldate = {2026-02-17},
  archiveprefix = {arXiv},
  copyright = {https://www.elsevier.com/tdm/userlicense/1.0/},
  langid = {english}
}

@book{Lor21CoendCalculus,
  title = {(Co)End Calculus},
  author = {Loregian, Fosco},
  year = 2021,
  series = {London Mathematical Society Lecture Note Series},
  eprint = {1501.02503},
  publisher = {Cambridge University Press},
  address = {Cambridge},
  doi = {10.1017/9781108778657},
  url = {https://www.cambridge.org/core/books/coend-calculus/C662E90767358B336F17B606D19D8C43},
  urldate = {2023-04-13},
  archiveprefix = {arXiv},
  isbn = {978-1-108-74612-0}
}

@article{Meu23StateSumModels,
  title = {State Sum Models with Defects Based on Spherical Fusion Categories},
  author = {Meusburger, Catherine},
  year = 2023,
  month = sep,
  journal = {Advances in Mathematics},
  volume = {429},
  eprint = {2205.06874},
  pages = {109177},
  issn = {00018708},
  doi = {10.1016/j.aim.2023.109177},
  url = {https://linkinghub.elsevier.com/retrieve/pii/S0001870823003201},
  urldate = {2026-05-29},
  archiveprefix = {arXiv},
  langid = {english}
}

@article{MW12BlobComplex,
  title = {The Blob Complex},
  author = {Morrison, Scott and Walker, Kevin},
  year = 2012,
  month = jul,
  journal = {Geometry \& Topology},
  volume = {16},
  number = {3},
  eprint = {1009.5025},
  primaryclass = {math},
  pages = {1481--1607},
  issn = {1364-0380, 1465-3060},
  doi = {10.2140/gt.2012.16.1481},
  url = {http://arxiv.org/abs/1009.5025},
  urldate = {2023-08-22},
  archiveprefix = {arXiv}
}

@article{Ost01ModuleCategoriesWeak,
  title = {Module Categories, Weak Hopf Algebras and Modular Invariants},
  author = {Ostrik, Victor},
  year = 2003,
  month = jun,
  journal = {Transformation Groups},
  volume = {8},
  number = {2},
  eprint = {math/0111139},
  pages = {177--206},
  publisher = {arXiv},
  doi = {10.1007/s00031-003-0515-6},
  url = {http://arxiv.org/abs/math/0111139},
  urldate = {2024-10-30},
  archiveprefix = {arXiv},
  langid = {english}
}

@article{PrzKauffmanBracketSkein1999,
  title = {Kauffman Bracket Skein Module of a Connected Sum of 3-Manifolds},
  author = {Przytycki, J{\'o}zef H.},
  year = 2000,
  month = feb,
  journal = {Manuscripta Mathematica},
  volume = {101},
  number = {2},
  eprint = {math/9911120},
  pages = {199--207},
  publisher = {arXiv},
  doi = {10.1007/s002290050014},
  url = {http://arxiv.org/abs/math/9911120},
  urldate = {2025-01-14},
  archiveprefix = {arXiv},
  langid = {english}
}

@misc{RST24ExcisionSpacesAdmissible,
  title = {Excision for Spaces of Admissible Skeins},
  author = {Runkel, Ingo and Schweigert, Christoph and Tham, Ying Hong},
  year = 2024,
  month = jul,
  number = {arXiv:2407.09302},
  eprint = {2407.09302},
  primaryclass = {math},
  publisher = {arXiv},
  doi = {10.48550/arXiv.2407.09302},
  url = {http://arxiv.org/abs/2407.09302},
  urldate = {2025-08-21},
  archiveprefix = {arXiv}
}

@article{RT90RibbonGraphsTheir,
  title = {Ribbon Graphs and Their Invaraints Derived from Quantum Groups},
  author = {Reshetikhin, Nicolai Y. and Turaev, Vladimir G.},
  year = 1990,
  month = jan,
  journal = {Communications in Mathematical Physics},
  volume = {127},
  number = {1},
  pages = {1--26},
  issn = {0010-3616, 1432-0916},
  doi = {10.1007/BF02096491},
  url = {http://link.springer.com/10.1007/BF02096491},
  urldate = {2024-04-23},
  copyright = {http://www.springer.com/tdm},
  langid = {english}
}

@article{Sch13TracesModuleCategories,
  title = {Traces on Module Categories over Fusion Categories},
  author = {Schaumann, Gregor},
  year = 2013,
  month = apr,
  journal = {Journal of Algebra},
  volume = {379},
  eprint = {1206.5716},
  primaryclass = {math},
  pages = {382--425},
  issn = {00218693},
  doi = {10.1016/j.jalgebra.2013.01.013},
  url = {http://arxiv.org/abs/1206.5716},
  urldate = {2025-10-10},
  archiveprefix = {arXiv},
  langid = {english}
}

@phdthesis{Tha21CategoryBoundaryValues,
  title = {On the Category of Boundary Values in the Extended Crane-Yetter TQFT},
  author = {Tham, Ying Hong},
  year = 2021,
  month = aug,
  eprint = {2108.13467},
  primaryclass = {math},
  doi = {10.48550/arXiv.2108.13467},
  url = {http://arxiv.org/abs/2108.13467},
  urldate = {2026-02-12},
  archiveprefix = {arXiv},
  school = {Stony Brook University}
}

@book{Tur10QuantumInvariantsKnots,
  title = {Quantum Invariants of Knots and 3-Manifolds},
  author = {Turaev, Vladimir G.},
  year = 2010,
  series = {De Gruyter Studies in Mathematics},
  edition = {2nd rev. ed},
  number = {18},
  publisher = {De Gruyter},
  address = {Berlin ; New York},
  isbn = {978-3-11-022183-1},
  langid = {english},
  lccn = {QC174.52.C66 T87 2010}
}

\end{document}